\newtheorem{Th}{Theorem}[section] 
\newtheorem{Prop}{Proposition}[section]   
\newtheorem{Lem}{Lemma}[section]   
\newtheorem{Coro}{Corollary}[section]   
\newtheorem{Rem}{Remark}[section]
\newcommand{\R}{\mathbb{R}}
\newcommand{\Rr}{\mathbb{R}_r}
\newcommand{\Z}{\mathbb{Z}}
\newcommand{\x}{\langle x\rangle}
\newcommand{\rr}{\langle r\rangle}
\newcommand{\A}{{\mathcal A}}
\newcommand{\AH}{{\mathbb A}}
\newcommand{\D}{{\mathcal D}}
\newcommand{\F}{{\mathcal F}}
\newcommand{\U}{{\mathcal U}}
\newcommand{\V}{{\mathcal V}}
\newcommand{\Diff}{{\rm Diff}}
\newcommand{\id}{{\rm id}}
\newcommand{\supp}{\mathop{\rm supp}\nolimits}
\begin{document}

\title{Asymptotics in shallow water waves}   
 
\author{Robert McOwen \& Peter Topalov} 
  
\maketitle

\begin{abstract}  
In this paper we consider the initial value problem for a family of shallow water equations on the line
$\R$ with various asymptotic conditions at infinity. In particular we construct solutions with
prescribed asymptotic expansion as $x\to\pm\infty$ and prove their invariance with respect to the 
solution map.
\end{abstract}

%%%%%%%%%%%%%%%%%%%%%%%%%%%%%%%%%
\section{Introduction}\label{sec:introduction}
In this paper we study the initial value problem for a family of shallow water equations on the
line $\R$  that contains the Camassa-Holm equation (CH),
\begin{equation}\label{eq:ch}
\left\{
\begin{array}{l}
m_t+u m_x+2 m u_x=0,\;\;m:=u-u_{xx}\\
u|_{t=0}=u_0
\end{array}
\right.
\end{equation}
with initial data $u_0$ that has an asymptotic expansion of order $N\in \Z_{\ge 0}$,
\begin{equation}\label{eq:asymptotics}
u_0(x)=\left\{
\begin{array}{l}
c_0^++\frac{c_1^+}{x}+...+\frac{c_N^+}{x^N}+o(|x|^{-N})\;\;\mbox{as}\;\;x\to+\infty\\
c_0^-+\frac{c_1^-}{x}+...+\frac{c_N^-}{x^N}+o(|x|^{-N})\;\;\mbox{as}\;\;x\to-\infty,
\end{array}
\right.
\end{equation}
where $c_k^\pm$ are real constants for $0\le k\le N$. (Note that the asymptotic expansion of $u_0$ as
$x\to+\infty$ is {\em not} necessarily the same as the one when $x\to-\infty$.)
We are interested to study whether \eqref{eq:ch} possesses a solution with
the prescribed initial data and if the corresponding solution map (provided it exists) preserves the
asymptotic expansion \eqref{eq:asymptotics} in the sense described below.
We postpone for the moment the questions related to the regularity of the initial data $u_0$ and
the solutions.

Note that similar questions were studied for the Korteweg-de Vries equation (KdV) in a series of papers
\cite{BS1,BS2,BS3} as well as in \cite{KPST} for the modified KdV equation. In the present paper we
develop a different approach to this problem. The new approach is influenced by the seminal paper
of Arnold \cite{Arn} (cf. also the related papers \cite{EM,OK,Mis1,Con2}) and is based on the introduction
of a {\em group of asymptotic diffeomorphisms} on the line. The group of asymptotic diffeomorphisms is an
infinite dimensional topological group modeled on a Banach space of functions on the line with prescribed
asymptotic expansions at infinity. This provides a general framework for studying the asymptotics of solutions
for a relatively large class of nonlinear equations. As a model we consider equation \eqref{eq:ch} 
(as well as its generalization \eqref{eq:b-family} below). The approach is {\em not}
restricted to equations with one spacial dimension (cf. \cite{McOwenTopalov1}). 

Since the time when equation \eqref{eq:ch} was derived (in \cite{FF} using algebraic principles and in
\cite{CH} as a model for shallow water waves), it has been attracting a lot of
attention. One of the reasons is that  it is completely integrable and, unlike the classical
Korteweg-de Vries equation, admits  non-smooth solitary traveling waves $u(x,t)=c e^{-|x-ct|}$ called
{\em peakons}.\footnote{Note that the $x$-derivative of $u(t,x)$ has a jump of magnitude $2c$ at $x=ct$.}
Equation \eqref{eq:ch} is a particular case of a $1$-parameter family of shallow water
equations (\cite{HS1}),
\begin{equation}\label{eq:b-family}
\left\{
\begin{array}{l}
m_t+u m_x+b m u_x=0,\;\;m=u-u_{xx}\\
u|_{t=0}=u_0
\end{array}
\right.
\end{equation}
where $b$ is a real parameter. When $b=2$ we get \eqref{eq:ch}; the case $b=3$
is the Degasperis-Procesi equation (DP). Both equations are known to be completely integrable
and to admit solitary traveling waves. It is worth noting that, although the cases when $b\ne2, 3$ in 
\eqref{eq:b-family} are not known to be completely integrable, they still admit solitary
traveling waves (\cite{HS2}). 

The main result of this paper states that, under some technical assumptions on the remainder term 
$o(|x|^{-N})$ in \eqref{eq:asymptotics}, for any $b\in\R$ and for any $m\ge 3$ and $N\ge 0$ there
exists $T>0$ and a solution $u\in C^0([0,T],H^m_{loc}(\R))\cap C^1([0,T],H^{m-1}_{loc}(\R))$ of 
\eqref{eq:b-family} so that for any $t\in[0,T]$, $u(t)$ has the asymptotic expansion 
\eqref{eq:asymptotics} with coefficients $c_k^\pm$ for $0\leq k\leq N$ that may depend on
$t\in[0,T]$ -- see Theorem \ref{th:main1} and  Theorem \ref{th:main2} below for the precise
statement of the result. Here $H^m_{loc}(\R)$ denotes the space of measurable real valued functions
on $\R$ whose weak derivatives up to order $m$ are locally square integrable.

\vspace{.3cm}

\noindent {\em Analytic set-up:} 
In order to perform analysis on functions satisfying \eqref{eq:asymptotics} we assume that
the remainder term $o(|x|^{-N})$ lies in a suitable weighted Sobolev space. The main idea is to 
incorporate the asymptotic terms in \eqref{eq:asymptotics} into a new functional space that we call an
{\em asymptotic space}. It turns out that the asymptotic spaces defined this way are Banach spaces
that enjoy the Banach algebra property in the sense that the (pointwise) product of functions is
continuous. Another important feature of these spaces is that they admit a natural (graded) Lie algebra
structure that corresponds to the group structure of a special class of diffeomorphisms of $\R$
(see below). In this paper we restrict our attention to the following asymptotic spaces:

\vspace{.3cm}

\noindent $(1)$ {\em The asymptotic space $\A_{n,N}^m$.}
Take $m\ge 1$ and $N\ge 0$.
In order to define the asymptotic space $\A_{n,N}^m(\R)$, we first define the weighted Sobolev space 
\begin{equation}\label{eq:W_N^m}
W_N^m(\R):=\{f\in H^m_{loc}(\R)\,|\,\x^Nf, \x^{N+1}f',..., \x^{N+m}f^{(m)}\in L^2(\R)\},
\end{equation}
supplied with the norm
\[
\|f\|_{W^m_N}:=\Big(\sum_{j=0}^m\int_{\R}|\x^{N+j}f^{(j)}(x)|^2\,dx\Big)^{1/2},
\]
where $\x:=\sqrt{1+x^2}$ and $f^{(j)}=\partial_x^jf$ denotes the $j$-th weak derivative of $f$.
Note that $f\in W^m_N(\R)$ implies (Lemma \ref{lem:decay})
\begin{equation}\label{eq:small_o}
f(x)=o(|x|^{-N})\;\;\mbox{as}\;\;x\to\pm\infty\,.
\end{equation}
One easily sees that \eqref{eq:asymptotics} is equivalent to
\begin{equation}\label{eq:asymptotics'}
u_0(x)=\sum_{k=0}^N\Big(a_k\frac{1}{\x^k}+b_k\frac{x}{\x^{k+1}}\Big)+o(\x^{-N})
\end{equation}
where the constants $a_k, b_k\in\R$, $0\le k\le m$, are uniquely determined from $c_k^\pm$, $0\le k\le N$.
In view of this we define the asymptotic space
\[
\A^m_N(\R):=\Big\{u=\sum_{k=0}^N\Big(a_k\frac{1}{\x^k}+b_k\frac{x}{\x^{k+1}}\Big)+
f\,\Big|\,f\in W^m_N(\R)\Big\},
\]
\[
\|u\|_{\A^m_N}:=\sum_{k=0}^N(|a_k|+|b_k|)+\|f\|_{W_N^m}\,.
\]
By \eqref{eq:small_o}, the elements of $\A^m_N(\R)$ satisfy the asymptotic expansion
\eqref{eq:asymptotics'} (and equivalently, \eqref{eq:asymptotics}).
More generally, for $0\le n\le N$, define
\[
\A_{n,N}^m(\R):=\Big\{u=\sum_{k=n}^N\Big(a_k\frac{1}{\x^k}+b_k\frac{x}{\x^{k+1}}\Big)+
f\,\Big|\,f\in W^m_N(\R)\Big\},
\]
\[
\|u\|_{\A_{n,N}^m}:=\sum_{k=n}^N(|a_k|+|b_k|)+\|f\|_{W_N^m}\,.
\]
We set,
\begin{equation}\label{eq:convention1}
\A_{n,N}^m(\R):=W_N^m(\R)\,\,\,\mbox{for}\,\,\,n\ge N+1\,.
\end{equation}

\vspace{.3cm}

\noindent For any $R>0$ denote by $B_{\A^m_{n,N}}(R)$ the ball of radius $R$ in $\A^m_{n,N}$ centered at
the origin. The following theorem is proved in Section \ref{sec:A}.
\begin{Th}\label{th:main1}
For any $b\in\R$, $m\ge 3$, $N\ge 0$, $n\ge 1$ and $R>0$, there exists $T>0$ such that for any 
$u_0\in B_{\A^m_{n,N}}(R)$ there exists a unique solution 
$u\in C^0([0,T],\A_{n,N}^m(\R))\cap C^1([0,T],A^{m-1}_{n,N}(\R))$ of \eqref{eq:b-family}
that depends continuously on the initial data $u_0\in B_{\A^m_{n,N}}(R)$ in the sense that the data-to-solution map,
\[
u_0\mapsto u,\,\,\,B_{\A^m_{n,N}}(R)\to C^0([0,T],\A_{n,N}^m(\R))\cap C^1([0,T],\A^{m-1}_{n,N}(\R))\,,
\]
is continuous. Moreover, the coefficients $a_k$, $b_k$, $n\le k\le\min\{2n,N\}$, in the asymptotic expansion
of the solution $u(t)$ are independent of $t$.
\end{Th}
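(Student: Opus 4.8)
The strategy I would follow is the Lagrangian/geometric approach suggested by the reference to Arnold's paper: rewrite the $b$-family \eqref{eq:b-family} as an ODE on the group of asymptotic diffeomorphisms of $\R$ associated to the scale of spaces $\A^m_{n,N}$, and then invoke the standard existence-uniqueness-smoothness theorem for ODEs in Banach spaces. Concretely, one introduces the flow $\varphi(t)$ with $\dot\varphi=u(t)\circ\varphi$, $\varphi(0)=\id$, and computes how $m=u-u_{xx}$ transports along $\varphi$. For the $b$-family the momentum satisfies $\frac{d}{dt}\big(m(t)\circ\varphi\cdot(\varphi_x)^b\big)=0$ when $b$ plays its usual role, so $m(t)\circ\varphi=(\varphi_x)^{-b}\,m_0$; combined with the Helmholtz operator $\Lambda=1-\partial_x^2$ this lets one solve for $u(t)=\Lambda^{-1}m(t)$ and close the system into a second-order ODE $\ddot\varphi=F(\varphi,\dot\varphi)$ on $T(\Diff^m_{n,N})$. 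The key inputs that make this work in the asymptotic spaces are: (i) the Banach algebra property of $\A^m_{n,N}$ and the mapping properties of $\Lambda^{-1}$ on these spaces (one must check $\Lambda^{-1}$ gains two derivatives and preserves the asymptotic structure, possibly with a shift in the truncation index $n$); and (ii) that composition $(u,\varphi)\mapsto u\circ\varphi$ and inversion $\varphi\mapsto\varphi^{-1}$ are smooth (or at least $C^1$) maps on the relevant Banach manifolds — these are presumably the "group of asymptotic diffeomorphisms" results established earlier in the paper. Granting these, $F$ is a smooth (or Lipschitz) vector field near the zero section, Picard–Lindelöf gives a unique local solution $\varphi\in C^1([0,T],\Diff^m_{n,N})$ on a time interval depending only on $R$, and $u=\dot\varphi\circ\varphi^{-1}$ is then recovered in $C^0([0,T],\A^m_{n,N})\cap C^1([0,T],\A^{m-1}_{n,N})$, with continuous dependence on $u_0$ inherited from continuous dependence of ODE solutions on data.

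For the final assertion — that the coefficients $a_k,b_k$ for $n\le k\le\min\{2n,N\}$ are $t$-independent — I would argue at the level of the asymptotic expansion directly. Suppose $u_0\in\A^m_{n,N}$, so its leading decay is $O(|x|^{-n})$. One then checks by a bootstrap on the equation that the nonlinear terms $u m_x+b m u_x$ land one level deeper in the asymptotic scale: schematically, if $u=O(|x|^{-n})$ then $u_x, m, m_x$ are all $O(|x|^{-n})$ (the operator $\Lambda^{-1}$ does not worsen decay), so each quadratic term is $O(|x|^{-2n})$, hence $m_t$ — and therefore $u_t$ — lies in $\A^m_{2n,N}$ (or $\A^m_{2n,N}=W^m_N$ when $2n>N$, by convention \eqref{eq:convention1}). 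Since $u_t$ has no $a_k\x^{-k}$ or $b_k x\x^{-(k+1)}$ component for $k<2n$, the coefficients $a_k(t),b_k(t)$ with $n\le k\le\min\{2n,N\}$ have vanishing $t$-derivative, and being continuous in $t$ they are constant. The cleanest way to package this is to note that the asymptotic spaces form a filtration $\A^m_{n,N}\supset\A^m_{n+1,N}\supset\cdots$ that is compatible with the algebra structure in the graded sense $\A^m_{n,N}\cdot\A^m_{n',N}\subset\A^m_{n+n',N}$, and that $\Lambda^{-1}$ preserves each $\A^m_{n,N}$; then the statement "$u_t\in\A^m_{2n,N}$" is immediate from the structure of \eqref{eq:b-family}.

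The main obstacle I anticipate is verifying that the reformulated vector field $F$ genuinely maps the Banach scale to itself with the claimed regularity — in particular that $\Lambda^{-1}$ acts on $\A^m_{n,N}$ with the correct behavior on the asymptotic coefficients. The operator $\Lambda^{-1}$ is a convolution with $\tfrac12 e^{-|x|}$, which decays rapidly, so one expects it to map $W^{m-2}_N\to W^m_N$; but its action on the non-decaying model functions $\x^{-k}$ and $x\x^{-(k+1)}$ must be computed or estimated to confirm it stays within the asymptotic space (this is where a possible index shift would show up, and where the hypothesis $n\ge1$ is presumably used, since the case $n=0$ would involve a constant term $c_0^\pm$ that $\Lambda^{-1}$ returns unchanged and would require separate bookkeeping). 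A secondary technical point is the loss-of-derivative issue familiar from Camassa–Holm: the map $u\mapsto u\circ\varphi$ is only continuous, not Lipschitz, as a function of $\varphi$ in a fixed space, so one must either work in the Lagrangian variables throughout (where no derivative loss occurs) or use the standard trick of differentiating the flow equation — I would follow the Lagrangian route, as it is exactly what the "group of asymptotic diffeomorphisms" framework is designed to accommodate.
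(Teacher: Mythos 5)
Your plan follows the paper's actual route: reformulate \eqref{eq:b-family} as an ODE on $\A\D^m_{n,N}\times\A^m_{n,N}$ in Lagrangian variables, establish that the resulting vector field is smooth using the Banach algebra property of the asymptotic spaces, the mapping properties of $(1-\partial_x^2)^{-1}$, and the smoothness of the conjugated operator $R_\varphi\circ(1-\partial_x^2)^{-1}\circ R_{\varphi^{-1}}$, then apply the ODE theorem in Banach spaces and recover $u=v\circ\varphi^{-1}$. (The one substantive step you flag as the ``main obstacle'' but do not carry out is resolved in the paper by showing that the conjugate of $\Lambda$ itself, not of $\Lambda^{-1}$, is smooth and a local diffeomorphism near $(\id,0)$, and then inverting via the inverse function theorem; your momentum-transport identity $m\circ\varphi\cdot\varphi_x^{\,b}=m_0$ is correct but is not how the paper closes the system.) Two points in your write-up are genuinely off, though both are repairable.

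First, the conservation argument contains an off-by-one error which, as written, fails to prove the claim at $k=2n$. Each quadratic term of the nonlinearity carries at least one $x$-derivative, and in this scale differentiation gains a full order of decay: $u\in\A^m_{n,N}$ gives $u_x\in\A^{m-1}_{n+1,N+1}$, so $uu_x$, $um_x$, $mu_x$ all lie in $\A_{2n+1,\cdot}$, not merely $\A_{2n,\cdot}$, and hence $u_t\in\A^{m-1}_{2n+1,N}$. With your stated bound $u_t\in\A^m_{2n,N}$ you would only freeze the coefficients with $k\le 2n-1$, whereas the theorem asserts conservation up to $k=\min\{2n,N\}$ inclusive. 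The paper records precisely $\F_b(\U\times\A^m_{n,N})\subseteq\A^m_{n,N}\times\A^m_{2n+1,N}$, freezes the coefficients of the Lagrangian variable $v$, and then transfers the statement to $u=v\circ\varphi^{-1}$ by composition estimates; your direct Eulerian version (projecting $u_t$ onto the finite-dimensional asymptotic part) is legitimate and in fact a little cleaner, but only after the exponent is corrected to $2n+1$.

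Second, you assert a uniform existence time ``depending only on $R$'' directly from Picard--Lindel\"of. In an infinite-dimensional Banach space a $C^\infty$ vector field is only locally Lipschitz and a ball is not compact, so the ODE theorem by itself yields some $R'>0$ and $T'>0$ valid on $B_{\A^m_{n,N}}(R')$, with $R'$ not at your disposal. To reach an arbitrary prescribed $R$ the paper uses the scaling invariance $u_\lambda(t)=\lambda u(\lambda t)$ of \eqref{eq:b-family}, taking $T=(R'/R)T'$; some such step is needed and is missing from your plan.
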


\vspace{.3cm}

\noindent $(2)$ {\em The asymptotic space $\AH_{n,N}^m$.}
Although the choice of the remainder space $W_N^m$ is natural as it mimics the properties
of the asymptotic terms $1/\x^k$ and $x/\x^{k+1}$ with respect to the differentiation in the $x$-direction, 
this choice is not unique: one can choose other Banach spaces of functions that satisfy the {\em remainder condition} 
\eqref{eq:small_o}. For any $m\ge 1$ and $N\ge 0$ consider the weighted Sobolev space,
\begin{equation}\label{eq:H_N^m}
H_N^m(\R):=\{f\in H^m_{loc}(\R)\,|\,\x^Nf, \x^Nf',..., \x^Nf^{(m)}\in L^2(\R)\},
\end{equation}
\[
\|f\|_{H^m_N}:=\Big(\sum_{j=0}^m\int_{\R}|\x^N f^{(j)}(x)|^2\,dx\Big)^{1/2}\,.
\]
One has the following continuous inclusions:
$
W_N^m(\R)\subseteq H_N^m(\R)\subseteq H^m(\R),
$
and, for $1\le l\le N$,
$
H_N^l(\R)\subseteq W_{N-l}^l(\R)\,.
$
Moreover, the elements of $H^m_N(\R)$ satisfy the remainder condition \eqref{eq:small_o}
(see Lemma \ref{lem:decay'}).
Hence, for $0\le n\le N$, we can define in a similar way as above the asymptotic space,
\begin{equation}\label{eq:AH_N^m}
\AH^m_{n,N}(\R):=\Big\{u=\sum_{k=n}^N\Big(a_k\frac{1}{\x^k}+b_k\frac{x}{\x^{k+1}}\Big)+
f\,\Big|\,f\in H^m_N(\R)\Big\},
\end{equation}
\[
\|u\|_{\AH^m_{n,N}}:=\sum_{k=n}^N(|a_k|+|b_k|)+\|f\|_{H_N^m}\,.
\]
We set,
\begin{equation}\label{eq:convention2}
\AH_{n,N}^m(\R):=H_N^m(\R)\,\,\,\mbox{for}\,\,\,n\ge N+1\,.
\end{equation}

\vspace{.3cm}

\noindent For any $R>0$ denote by $B_{\AH^m_{n,N}}(R)$ the ball of radius $R$ in $\AH^m_{n,N}$ centered at
the origin. The following theorem is proved in Section \ref{sec:AH}.
\begin{Th}\label{th:main2}
For any $b\in\R$, $m\ge 3$, $N\ge 0$, $n\ge 0$ and $R>0$, there exists $T>0$ such that for any 
$u_0\in B_{\AH^m_{n,N}}(R)$, there exists a unique solution 
$u\in C^0([0,T],\AH_{n,N}^m(\R))\cap C^1([0,T],\AH^{m-1}_{n,N}(\R))$ of \eqref{eq:b-family}
that depends continuously on the initial data $u_0\in B_{\AH^m_{n,N}}(R)$ in the sense that the data-to-solution map,
\[
u_0\mapsto u,\;\;\;B_{\AH^m_{n,N}}(R)\to C^0([0,T],\AH_{n,N}^m(\R))\cap C^1([0,T],\AH^{m-1}_{n,N}(\R))\,,
\]
is continuous. Moreover, the coefficients $a_k$, $b_k$, $n\le k\le\min\{2n,N\}$, in the asymptotic expansion of
the solution $u(t)$ are independent of $t$.
\end{Th}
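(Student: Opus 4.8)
\noindent\emph{Proof strategy.}
The plan is to recast \eqref{eq:b-family} as a first order ODE for the Lagrangian flow on the group of asymptotic diffeomorphisms attached to $\AH^m_{n,N}(\R)$, solve that ODE by the Cauchy--Lipschitz theorem in Banach spaces, and then transfer existence, uniqueness, continuous dependence and the persistence of the leading coefficients back to $u$. Applying $(1-\partial_x^2)^{-1}$, i.e.\ convolution with $p(x)=\tfrac12 e^{-|x|}$, turns \eqref{eq:b-family} into the equivalent equation
\begin{equation}\label{eq:nonlocal-prop}
u_t+uu_x+\partial_x(1-\partial_x^2)^{-1}\Big(\tfrac b2\,u^2+\tfrac{3-b}2\,u_x^2\Big)=0,\qquad u|_{t=0}=u_0.
\end{equation}
Let $\varphi$ be the flow of $u$, so $\dot\varphi=u\circ\varphi$, $\varphi|_{t=0}=\id$, and set $v:=\dot\varphi$. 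Write $\Diff^m_{n,N}(\R)$ for the group of orientation-preserving bijections $\varphi$ of $\R$ with $\varphi-\id\in\AH^m_{n,N}(\R)$ and $\inf_{\R}\varphi'>0$ -- an open subset of the affine space $\id+\AH^m_{n,N}(\R)$ once $m\ge2$. Differentiating $v$ in $t$ and using \eqref{eq:nonlocal-prop}, the pair $(\varphi,v)$ satisfies the first order system $\dot\varphi=v$, $\dot v=F(\varphi,v)$, where, after the substitution $w=\varphi(z)$ in the convolution,
\[
F(\varphi,v)(x)=-\int_{\R}p'\big(\varphi(x)-\varphi(z)\big)\Big(\tfrac b2\,v(z)^2\varphi_x(z)+\tfrac{3-b}2\,\frac{v_x(z)^2}{\varphi_x(z)}\Big)\,dz.
\]
The crucial structural feature is that $\varphi$ enters $F$ only through $\varphi(x),\varphi(z),\varphi_x(z)$ inside the smooth, exponentially decaying kernel -- $F$ never differentiates $\varphi$ -- which is exactly what repairs the loss of a derivative present in the Eulerian nonlinearity $uu_x$.

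\medskip
The heart of the proof is to show that $F$ extends to a $C^1$ (indeed $C^\infty$) map from an open neighbourhood $\U$ of $(\id,0)$ in $\Diff^m_{n,N}(\R)\times\AH^m_{n,N}(\R)$ into $\AH^m_{n,N}(\R)$, with derivative bounded uniformly on balls. The ingredients are: the Banach algebra property of the asymptotic spaces, which yields $v^2\in\AH^m_{n,N}$ and $v_x^2/\varphi_x,\ 1/\varphi_x\in\AH^{m-1}_{n,N}$ (using that $\AH^m_{n,N}$ embeds in the bounded $C^1$ functions for $m\ge2$, so $\varphi_x$ stays bounded away from $0$ near $\id$); the mapping property of the Helmholtz operator $(1-\partial_x^2)^{-1}\colon\AH^{m-1}_{n,N}\to\AH^{m+1}_{n,N}$, so that $\partial_x(1-\partial_x^2)^{-1}$ loses no regularity in the composition defining $F$; and the continuity of right composition $\AH^{m-1}_{n,N}\circ\Diff^m_{n,N}\subseteq\AH^{m-1}_{n,N}$ and of inversion on $\Diff^m_{n,N}$. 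For differentiability of $F$ in the $\varphi$-variable in a direction $\psi$ one uses $p''=p-\delta$: the $\delta$-part of the resulting kernel integrates to $0$ because $\varphi$ is increasing, so only terms in which $\psi$ (or $\psi_x$, linearly) occurs remain, and these are controlled in $\AH^m_{n,N}$ by the same three ingredients. I expect this step -- the uniform-on-balls bookkeeping of how the asymptotic monomials $\x^{-k}$ and $x\,\x^{-k-1}$ interact with pointwise products, with $(1-\partial_x^2)^{-1}$, and with right translation -- to be the principal obstacle; the rest is soft.

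\medskip
Granting that $F$ is $C^1$ and bounded on balls, the Cauchy--Lipschitz theorem on $\AH^m_{n,N}(\R)\times\AH^m_{n,N}(\R)$ yields $T>0$ depending only on $R$ and a unique $(\varphi,v)\in C^1\big([0,T],\Diff^m_{n,N}\times\AH^m_{n,N}\big)$ with $(\varphi,v)|_{t=0}=(\id,u_0)$, depending continuously on $u_0\in B_{\AH^m_{n,N}}(R)$; since $\dot v=F(\varphi,v)\in C^0$, in fact $v\in C^1\big([0,T],\AH^m_{n,N}\big)$. Put $u:=v\circ\varphi^{-1}$. Continuity of composition and of inversion -- only $C^0$, not better, into the $\AH^m_{n,N}$ target -- gives $u\in C^0\big([0,T],\AH^m_{n,N}\big)$; writing $\psi:=\varphi^{-1}$, $\dot\psi=-u/(\varphi_x\circ\psi)$, $u_x=(v_x\circ\psi)/(\varphi_x\circ\psi)$, one computes
\[
\partial_t u=(\dot v)\circ\psi+(v_x\circ\psi)\,\dot\psi=-\partial_x(1-\partial_x^2)^{-1}\!\Big(\tfrac b2\,u^2+\tfrac{3-b}2\,u_x^2\Big)-uu_x\in C^0\big([0,T],\AH^{m-1}_{n,N}\big),
\]
using $(v_x\circ\psi)\dot\psi=-uu_x$ and $(\dot v)\circ\psi=F(\varphi,v)\circ\psi$. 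Hence $u\in C^0([0,T],\AH^m_{n,N})\cap C^1([0,T],\AH^{m-1}_{n,N})$ solves \eqref{eq:nonlocal-prop}, hence \eqref{eq:b-family}, and its continuous dependence on $u_0$ is inherited from that of $(\varphi,v)$. For uniqueness, any solution $u$ in the stated class has a well-defined flow $\varphi\in C^1\big([0,T],\Diff^m_{n,N}\big)$ (solving $\dot\varphi=u\circ\varphi$ in $\AH^m_{n,N}$, which uses $m\ge3$), and $(\varphi,\dot\varphi)$ then solves the same first order system with the same data, so by ODE uniqueness it coincides with the constructed one.

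\medskip
For the persistence of the leading coefficients, write $u(t)=\sum_{k=n}^{N}\big(a_k(t)\x^{-k}+b_k(t)\,x\,\x^{-k-1}\big)+f(t)$ with $f(t)\in H^m_N(\R)$, so $u(t)$ has leading order $n$ at $\pm\infty$. By the algebra of asymptotic expansions $uu_x$ has order $2n+1$, while $u^2$ has order $2n$; since $(1-\partial_x^2)\x^{-j}=\x^{-j}+O(\x^{-j-2})$ (and similarly on the monomials $x\,\x^{-j-1}$), the operator $(1-\partial_x^2)^{-1}$ preserves the leading asymptotic term, so $\partial_x(1-\partial_x^2)^{-1}(u^2)$ has order $2n+1$ and $\partial_x(1-\partial_x^2)^{-1}(u_x^2)$ has order $2n+3$. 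Thus the right-hand side of \eqref{eq:nonlocal-prop}, which equals $\partial_t u$, has an asymptotic expansion all of whose coefficients of order $k\le2n$ vanish; equating the coefficients of $\x^{-k}$ and of $x\,\x^{-k-1}$ on the two sides gives $\dot a_k(t)=\dot b_k(t)=0$ for $n\le k\le\min\{2n,N\}$. In particular when $n=0$ -- permitted here, unlike for $\A^m_{n,N}$ -- this says that $a_0,b_0$, equivalently the limits $c_0^\pm=\lim_{x\to\pm\infty}u(t,x)$, are conserved.
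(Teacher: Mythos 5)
Your overall strategy is the paper's: pass to Lagrangian coordinates $(\varphi,v)$ on the group of asymptotic diffeomorphisms, solve the resulting first-order ODE by Cauchy--Lipschitz in $\AH^m_{n,N}\times\AH^m_{n,N}$, return to $u=v\circ\varphi^{-1}$ using the (merely continuous) composition and inversion maps, and deduce conservation of $a_k,b_k$ for $n\le k\le\min\{2n,N\}$ from the fact that the nonlinearity has leading order $2n+1$. Your Eulerian version of the conservation step (reading off $\dot a_k=\dot b_k=0$ by applying the coefficient functionals, which are bounded linear functionals on $\AH^{m-1}_{n,N}$, to $u_t=-uu_x-\partial_x(1-\partial_x^2)^{-1}(\cdots)$) is a legitimate and somewhat more direct variant of the paper's, which instead freezes the coefficients of $v$ via Theorem \ref{th:FH-smooth} and then transfers them to $u=v\circ\varphi^{-1}$.

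The gap is precisely the step you flag as ``the principal obstacle'': the proof that $F$ is $C^1$ from a neighbourhood of $(\id,0)$ into $\AH^m_{n,N}$, with the three ingredients you list not sufficing. The Banach-algebra property and the Eulerian mapping property of $(1-\partial_x^2)^{-1}$ control $\Lambda^{-1}\partial_x\big(\tfrac b2u^2+\tfrac{3-b}2u_x^2\big)$ for fixed $u$, but to turn this into a statement about $F(\varphi,v)=R_\varphi\circ\Lambda^{-1}\partial_x(\cdots)\circ R_{\varphi^{-1}}$ you must compose with $R_\varphi$ and $R_{\varphi^{-1}}$, and on these spaces composition and inversion are only continuous (not even Lipschitz) without losing derivatives --- which is the whole reason the Lagrangian trick is needed. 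Working instead with the kernel $p'(\varphi(x)-\varphi(z))$ avoids the non-smooth compositions, but then the Eulerian mapping property of $\Lambda^{-1}$ no longer applies and you must show directly that $x\mapsto\int p'(\varphi(x)-\varphi(z))h(z)\,dz$ decomposes into the prescribed monomials in $A_{n,N}$ plus an $H^m_N$ remainder, uniformly on balls; none of your ingredients, nor the $p''=p-\delta$ computation (which is fine as far as it goes), addresses this asymptotic expansion of the integral at $x\to\pm\infty$. The paper's resolution is different and constitutes the bulk of Sections \ref{sec:A}--\ref{sec:AH}: conjugate the \emph{local} operator $\Lambda=1-\partial_x^2$, whose conjugate $v-v''(1/\varphi')^2+v'\varphi''(1/\varphi')^3$ is an algebraic expression handled by Lemmas \ref{lem:AH-properties} and \ref{lem:1/phi'} and hence smooth in $(\varphi,v)$; prove $\Lambda:\AH^{m+2}_{n,N}\to\AH^m_{n,N}$ is an isomorphism (Lemma \ref{lem:L_on_H}, Proposition \ref{prop:L_on_H}); and obtain smoothness of the conjugated \emph{inverse} from the inverse function theorem (Proposition \ref{prop:AH}). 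Without either carrying out the kernel asymptotics or importing this conjugation-plus-inversion argument, the existence part is not established. (A smaller point: to get $T$ depending only on $R$ you need either the uniform-on-balls derivative bounds you assert --- which are part of the unproved step --- or the scaling $u_\lambda(t)=\lambda u(\lambda t)$ that the paper uses.)
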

\begin{Rem}\label{rem:constant_term}
Note  that, unlike in Theorem \ref{th:main1},  the case $n=0$ is {\em not} excluded in Theorem \ref{th:main2}.
Hence, the class of solutions obtained in Theorem \ref{th:main2} contains, in particular, solutions $u(t)$ such that
for any $t\in[0,T]$,  $u(t,x)\to c_0^\pm$ as $x\to\pm\infty$. The constants $c_0^\pm$ are {\em not} necessarily zero,
or equal to each other. This implies that such solutions are {\em not} necessarily summable or square integrable.
\end{Rem}
\begin{Rem}
In view of Theorem \ref{th:main1} and Theorem \ref{th:main2} the coefficients $c_k^\pm$, 
$n\le k\le n_*:=\max\{2n,N\}$ in the asymptotic expansion of the solution $u$ are {\em conservation laws} of
equation \eqref{eq:b-family} for any $b\in\R$. 
Hence we get $2(n_*-n+1)$ functionally independent integrals of motion.
This is in contrast to the KdV and the modified KdV equation where only the leading coefficients $c_n^\pm$
are preserved (\cite{KPST}). 
\end{Rem}
\begin{Rem}
By taking $n=N+1$  in Theorem \ref{th:main1} and Theorem \ref{th:main2} we obtain, in 
view of conventions \eqref{eq:convention1}  and  \eqref{eq:convention2}, that \eqref{eq:b-family} is
well-posed in the remainder spaces $W^m_N$ and $H^m_N$ ($m\ge 3$, $N\ge 0$) respectively.
The case of the space $W^3_1$ was considered by Constantin in \cite{Con2} while the case of $H^3=H^3_0$ was 
treated by Constantin-Escher in \cite{CE2}.
\end{Rem}
\begin{Rem}
As the goal of this work is to study the spacial asymptotics of the solutions we do not attempt to lower the
regularity exponent $m$ in our spaces. Note however that the method allows for analogs of Theorem \ref{th:main1} and 
Theorem \ref{th:main2} to be proved with $m>3/2$. We refer to \cite{CE4,DKT,Mis2} where low-regularity results are
proved for the CH equation. In order to keep the paper as non-technical as possible, 
we also do not consider the case of the asymptotic $L^p$-spaces $\A_{n,N}^{m,p}$ and $\AH_{n,N}^{m,p}$, $p\ge 1$
(see \cite{McOwenTopalov1} for the definitions and the main properties of these spaces). 
Since we always assume $p=2$, we simplify the notation by omitting reference to $p$.
\end{Rem}
\begin{Rem}
The definition of the asymptotic group in Section \ref{sec:AD} can be extended by allowing linear terms 
$c^\pm_{-1} x$, $c^\pm_{-1}\ne -1$ in the asymptotic expansion \eqref{eq:asymptotics} and
the corresponding asymptotic spaces. Note that similarly to KdV, the CH equation \eqref{eq:ch} possesses
the unbounded solution $u(t,x):=x/[3(t-1)]$ that, at $t=1$, blows-up at all points $x\in\R$ except $x=0$.
\end{Rem}

\noindent{\em Organization of the paper:} The paper is organized as follows. In Section \ref{sec:AD} we define
the group of asymptotic diffeomorphisms on the line and discuss its main properties. At the end of the section
we formulate Proposition \ref{prop:lagrangian_coordinates} that establishes a relation between the solutions of 
\eqref{eq:b-family} and the solutions of a dynamical system formulated in terms of the asymptotic group.
Section \ref{sec:A} is devoted to the proof of Theorem \ref{th:main1}. Theorem \ref{th:main2} is proved in
Section \ref{sec:AH}.  Proposition \ref{prop:lagrangian_coordinates} is proved in Appendix A.
In Appendix B we study the properties of the operator $1-\partial_x^2$ acting on weighted spaces
and, for the convenience of the reader, prove in much detail several auxiliary results needed in the main 
body of the paper.

%%%%%%%%%%%%%%%%%%%%%%%%%%%%%%%%%
\section{Groups of asymptotic diffeomorphisms}\label{sec:AD}
Denote by $\Diff_+^1(\R)$ the group of orientation preserving $C^1$-diffeomorphisms on the line $\R$.
For any $m\ge 2$, $N\ge 0$, and $n\ge 0$ define
\[
\A\D_{n,N}^m:=\{\varphi\in\Diff_+^1(\R)\,|\,\varphi(x)=x+u(x), u\in\A_{n,N}^m\}
\]
and
\[
\AH\D_{n,N}^m:=\{\varphi\in\Diff_+^1(\R)\,|\,\varphi(x)=x+u(x), u\in\AH_{n,N}^m\}\,.
\]
The topology on $\A\D_{n,N}^m$ (resp.\  $\AH\D_{n,N}^m$) is inherited in a natural way from the Banach structure of
$\A_{n,N}^m$ (resp.\ $\AH_{n,N}^m$). The following theorem follows from the analysis in \cite{McOwenTopalov1}. 

\begin{Th}\label{th:asymptotic_group}
$1)$ For any $m\ge 2$, $N\ge 0$, $n\ge 0$, the composition of maps
\begin{equation}\label{eq:composition}
\circ : \A^m_{n,N}\times\A\D^m_{n,N}\to\A^m_{n,N},\ (v,\varphi)\mapsto v\circ\varphi \quad\hbox{is continuous}
\end{equation}
and
\[
\circ : \A^{m+1}_{n,N}\times\A\D^m_{n,N}\to\A^m_{n,N},\ (v,\varphi)\mapsto v\circ\varphi \quad\hbox{is $C^1$-smooth.}
\]
\noindent $2)$ For any $m\ge 2$, $N\ge 0$, $n\ge 0$, mapping $\varphi$ to its inverse $\varphi^{-1}$
\[
{\rm Inv} : \A\D^{m+1}_{n,N}\to\A\D^{m+1}_{n,N},\ \varphi\mapsto\varphi^{-1} \quad\hbox{is continuous}
\]
and
\[
{\rm Inv} : \A\D^{m+1}_{n,N}\to\A\D^m_{n,N},\ \varphi\mapsto\varphi^{-1} \quad\hbox{is $C^1$-smooth.}
\]

\noindent $3)$ The same statements in $1)$ and $2)$ are true if $\A$ is replaced by $\AH$.
\end{Th}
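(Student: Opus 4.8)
The plan is to establish the group property of the asymptotic diffeomorphisms by reducing everything to the two algebraic facts that underlie the construction: the Banach algebra property of the asymptotic spaces $\A^m_{n,N}$ (and $\AH^m_{n,N}$) under pointwise multiplication, and the invariance of these spaces under composition with $C^1$-diffeomorphisms that are asymptotically close to the identity. Concretely, I would first record the chain rule / Fa\`a di Bruno expansion for $v\circ\varphi$ when $\varphi=\id+u$ with $u\in\A^m_{n,N}$: the $j$-th derivative $(v\circ\varphi)^{(j)}$ is a universal polynomial in $v'\circ\varphi,\dots,v^{(j)}\circ\varphi$ and in $\varphi',\dots,\varphi^{(j)}=1+u',u'',\dots,u^{(j)}$. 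Since $u'\in\A^{m-1}_{n+1,N}$, etc., and since $\A^0_{0,N}=W^0_N=H^0_N=L^2$ does not close under multiplication, one cannot argue naively; instead I would use that for $m\ge 2$ the space $\A^{m-1}_{n,N}$ is already a Banach algebra and is closed under multiplication by the bounded functions $v^{(j)}\circ\varphi$, which requires controlling the composition at one derivative lower and inducting on $m$.

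The key steps, in order, would be: \textbf{(i)} prove the \emph{composition estimate} $\|v\circ\varphi\|_{\A^m_{n,N}}\le C\big(\|v\|_{\A^m_{n,N}},\|u\|_{\A^m_{n,N}}\big)$ with $C$ bounded on bounded sets, by writing $v=\sum_{k=n}^N(a_k\x^{-k}+b_k x\x^{-k-1})+f$ and analyzing each asymptotic monomial composed with $\id+u$ separately — here $\langle x+u(x)\rangle^{-k}=\x^{-k}(1+O(u/\x))^{-k/2}$ and one Taylor-expands in powers of $u/\x$, collecting the leading $\x^{-k}$, $x\x^{-k-1}$ terms and absorbing the rest into $W^m_N$ using that $u/\x\in\A^m_{n+1,N}$ decays — while the remainder $f\circ\varphi$ is handled by the change-of-variables formula together with $1+u'$ being bounded above and bounded below away from $0$ (since $\varphi\in\Diff^1_+$); \textbf{(ii)} upgrade this to joint continuity of $(v,\varphi)\mapsto v\circ\varphi$ by splitting $v_1\circ\varphi_1-v_2\circ\varphi_2=(v_1-v_2)\circ\varphi_1+(v_2\circ\varphi_1-v_2\circ\varphi_2)$, estimating the first term by (i) and the second by the mean value theorem after a density argument approximating $v_2$ in $\A^{m+1}_{n,N}$; \textbf{(iii)} for the $C^1$-smoothness statement, show the derivative of $(v,\varphi)\mapsto v\circ\varphi$ at $(v,\varphi)$ in direction $(\delta v,\delta\varphi)$ is $\delta v\circ\varphi+(v'\circ\varphi)\cdot\delta\varphi$, which lands in $\A^m_{n,N}$ precisely because $v'\circ\varphi\in\A^m_{n,N}$ needs $v\in\A^{m+1}_{n,N}$ — this is the reason for the loss of one derivative — and then verify continuity of this derivative in $(v,\varphi)$ using (ii); \textbf{(iv)} deduce the inverse statements: given $\varphi=\id+u\in\A\D^{m+1}_{n,N}$, the inverse $\psi:=\varphi^{-1}$ satisfies $\psi=\id+w$ where $w=-u\circ\psi$, and one solves this fixed-point equation for $w\in\A^{m+1}_{n,N}$ using the contraction obtained from (i)–(ii) on a small ball (after first establishing $w\in\A^1_{n,N}$ by a direct ODE/implicit function argument, then bootstrapping regularity); \textbf{(v)} differentiability of $\mathrm{Inv}$ then follows from the implicit function theorem applied to $(\varphi,\psi)\mapsto\varphi\circ\psi-\id$, with the one-derivative loss coming from step (iii). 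Finally, \textbf{(vi)} observe that every estimate used — Banach algebra property, decay lemmas, change of variables — holds verbatim with $W^m_N$ replaced by $H^m_N$, since the inclusions $W^m_N\subseteq H^m_N$ and $H^l_N\subseteq W^{l}_{N-l}$ and Lemma~\ref{lem:decay'} are all that was needed, giving part $3)$.

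The main obstacle I expect is step (i) for the \emph{remainder} part, specifically showing $f\circ\varphi\in W^m_N$ (resp. $H^m_N$) with a good norm bound when $f\in W^m_N$ and $\varphi=\id+u$: one must check that the weight $\x^{N+j}$ at the point $x$ is comparable to $\langle\varphi(x)\rangle^{N+j}$, which is true because $\varphi(x)/x\to 1$ as $|x|\to\infty$ (as $u\in\A^m_{n,N}$ with $n\ge 1$, or $n\ge 0$ in the $\AH$ case where one still has $u$ bounded so $\varphi(x)=x+O(1)$ and $\langle\varphi(x)\rangle\asymp\x$), but making this uniform over bounded sets of $u$ and tracking it through all $j\le m$ derivatives via Fa\`a di Bruno is where the real bookkeeping lies. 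The secondary subtlety is ensuring the lower bound $1+u'\ge\kappa>0$ is uniform on bounded sets of $\A\D^m_{n,N}$ — this is built into the definition of $\Diff^1_+$ pointwise but uniformity requires the Sobolev embedding $\A^m_{n,N}\hookrightarrow C^1_b$ for $m\ge 2$, which I would invoke from the properties of the asymptotic spaces. Everything else is a routine, if lengthy, assembly of these estimates with density arguments, and I would largely cite \cite{McOwenTopalov1} for the detailed verification, as the theorem statement itself already does.
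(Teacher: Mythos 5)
The paper does not actually prove Theorem \ref{th:asymptotic_group}: it is imported wholesale from the companion preprint \cite{McOwenTopalov1} (``The following theorem follows from the analysis in \cite{McOwenTopalov1}''), so there is no internal argument to compare your proposal against. That said, your outline is the standard route for results of this type (the Fa\`a di Bruno composition estimate, joint continuity via the splitting $v_1\circ\varphi_1-v_2\circ\varphi_2=(v_1-v_2)\circ\varphi_1+(v_2\circ\varphi_1-v_2\circ\varphi_2)$ plus density of $\A^{m+1}_{n,N}$ in $\A^m_{n,N}$, the explicit differential $\delta v\circ\varphi+(v'\circ\varphi)\cdot\delta\varphi$ explaining the one-derivative loss, and the fixed-point equation $w=-u\circ(\id+w)$ together with the implicit function theorem for ${\rm Inv}$); it is exactly the scheme of \cite{IKT} adapted to the weighted/asymptotic setting, and it is consistent with every auxiliary fact the present paper does supply (Lemmas \ref{lem:chain_rule}, \ref{lem:decay}, \ref{lem:A-properties}, \ref{lem:1/x-composition}, Proposition \ref{prop:analyticity_general}). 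You correctly isolate the two genuine technical points: the uniform comparability $\langle\varphi(x)\rangle\asymp\x$ and the uniform lower bound on $\varphi'$.

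One small imprecision: in your discussion of the weight comparison you suggest that the $\A$ case requires $n\ge 1$, reserving the ``$u$ bounded, hence $\varphi(x)=x+O(1)$'' argument for the $\AH$ case with $n=0$. But the theorem allows $n=0$ for $\A$ as well, and the same argument applies there: elements of $\A^m_{0,N}$ are bounded (the asymptotic part is bounded and $W^m_N\subseteq H^m\subseteq C^{m-1}$ by Lemma \ref{lem:decay}), so $\langle\varphi(x)\rangle\asymp\x$ uniformly on bounded sets in all cases covered by the statement. With that adjustment, and accepting that the detailed Fa\`a di Bruno bookkeeping is deferred to \cite{McOwenTopalov1} just as the paper itself does, your proposal is a faithful reconstruction of the intended proof.
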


\begin{Coro}
For $m\geq 3$, $N\geq 0$, $n\geq 0$, both $\A\D_{n,N}^m$ and $\AH\D_{n,N}^m$ are topological groups with
respect to composition.
\end{Coro}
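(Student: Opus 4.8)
The plan is to deduce the Corollary directly from Theorem \ref{th:asymptotic_group}, since for $m\ge 3$ everything reduces to the $C^0$-statements obtained by specializing the $C^1$-statements of that theorem (which involve a loss of one derivative) to a slightly higher regularity index. First I would verify that $\A\D^m_{n,N}$ (resp. $\AH\D^m_{n,N}$) is a group under composition in the set-theoretic sense: if $\varphi(x)=x+u(x)$ and $\psi(x)=x+v(x)$ with $u,v\in\A^m_{n,N}$, then $\varphi\circ\psi(x)=x+\bigl(v(x)+u\circ\psi(x)\bigr)$, and one must check that $v+u\circ\psi\in\A^m_{n,N}$. Writing $u\circ\psi=u\circ(\id+v)$, this is exactly the content of the composition statement \eqref{eq:composition} in part $1)$ of Theorem \ref{th:asymptotic_group} (applied with the argument $\varphi=\psi\in\A\D^m_{n,N}$), so $u\circ\psi\in\A^m_{n,N}$ and hence $\varphi\circ\psi\in\A\D^m_{n,N}$; associativity is inherited from $\Diff^1_+(\R)$, and $\id$ is the neutral element (corresponding to $u=0$). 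For inverses, since $\varphi\in\Diff^1_+(\R)$ it already has a $C^1$ inverse as a diffeomorphism of $\R$; what part $2)$ of Theorem \ref{th:asymptotic_group} gives (with $m$ replaced by $m-1\ge 2$, so that we use $\mathop{\rm Inv}:\A\D^m_{n,N}\to\A\D^m_{n,N}$) is precisely that $\varphi^{-1}$ again lies in $\A\D^m_{n,N}$. Thus $\A\D^m_{n,N}$ is closed under composition and inversion, i.e. it is a group.

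Next I would address continuity of the group operations in the topology inherited from the Banach space $\A^m_{n,N}$. For composition, the map $(\varphi,\psi)\mapsto\varphi\circ\psi$ corresponds, under the identification $\varphi\leftrightarrow u$, $\psi\leftrightarrow v$, to $(u,v)\mapsto v+u\circ(\id+v)$; the first summand is continuous trivially, and the second is continuous by the $C^0$-statement \eqref{eq:composition} of Theorem \ref{th:asymptotic_group}$\,1)$, noting that $v\mapsto\id+v$ is an affine homeomorphism $\A^m_{n,N}\to\A\D^m_{n,N}$. Hence composition $\A\D^m_{n,N}\times\A\D^m_{n,N}\to\A\D^m_{n,N}$ is continuous. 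For inversion, continuity of $\mathop{\rm Inv}:\A\D^m_{n,N}\to\A\D^m_{n,N}$ is, again, exactly the first displayed statement in part $2)$ of Theorem \ref{th:asymptotic_group} (with $m$ there taken to be $m-1$, which is $\ge 2$ since $m\ge 3$). Therefore both structure maps are continuous with values in $\A\D^m_{n,N}$, which is what it means for $\A\D^m_{n,N}$ to be a topological group.

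Finally, part $3)$ of Theorem \ref{th:asymptotic_group} asserts that the identical statements hold with $\A$ replaced by $\AH$, so the same two paragraphs apply verbatim to $\AH\D^m_{n,N}$, giving that it too is a topological group for $m\ge 3$. I do not expect any genuine obstacle here: the Corollary is essentially a bookkeeping consequence of Theorem \ref{th:asymptotic_group}, the only points requiring care being (i) to invoke the $C^1$-statements of that theorem at the shifted index $m-1$ in order to land back in $\A\D^m_{n,N}$ rather than losing a derivative, and (ii) to keep straight the translation between the diffeomorphism picture ($\varphi=\id+u$) and the Banach-space picture ($u\in\A^m_{n,N}$) when transporting the continuity statements. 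The hypothesis $m\ge 3$ is used precisely so that $m-1\ge 2$ and Theorem \ref{th:asymptotic_group} is applicable in the needed form.
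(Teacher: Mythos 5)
Your proposal is correct and is essentially the paper's own (implicit) argument: the Corollary is stated without proof precisely because it is the bookkeeping consequence of Theorem \ref{th:asymptotic_group} that you spell out, with composition handled by the $C^0$-statement of part $1)$ at index $m\ge 2$ and inversion by part $2)$ applied at index $m-1\ge 2$ so as to land back in $\A\D^m_{n,N}$. Your identification of where the hypothesis $m\ge 3$ is actually needed (only for the inversion statement) is accurate.
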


\noindent
We call $\A\D_{n,N}^m$ and $\AH\D_{n,N}^m$ {\it groups of asymptotic diffeomorphisms}. Note that 
\[
\A\D_{n,N}^m \subseteq \AH\D_{n,N}^m \subseteq \Diff_+^1(\R).
\]
Let us also mention another group of diffeomorphisms that was studied in \cite{IKT}:
\[
\D^m(\R):=\{\varphi\in\Diff_+^1(\R)\,|\,\varphi(x)=x+u(x), u\in H^m\}\,.
\]
Observe that the inclusion $\AH\D_{n,N}^m\subseteq\D^m(\R)$ holds for $n\geq 1$ but 
$\D^m(\R)\not\subseteq\AH\D_{0,0}^m$; in fact, by convention \eqref{eq:convention1},
$\D^m(\R)=\AH\D_{1,0}^m$.

\begin{Rem} For simplicity we have omitted $\R$ in the notation for $\A\D_{n,N}^m$ and $\AH\D_{n,N}^m$. 
Henceforth in this paper we shall also omit $\R$ in the notation for other function spaces.
\end{Rem}

The significance of these groups of asymptotic diffeomorphisms stems from their use for introducing
Lagrangian coordinates. This is summarized in the following two results (in which we use $\dot{\ }$ to denote
the $t$-derivative).

\begin{Lem}\label{lem:ode}
Let $m\ge 3$, $N\ge 0$, $n\ge 0$, and $T>0$. Assume that $u\in C^0([0,T],\A^m_{n,N})$.
Then there exists a unique 
$\varphi\in C^1([0,T],\A D^m_{n,N})$ so that
\begin{eqnarray}\label{eq:ode}
\dot\varphi=u\circ\varphi,\,\,\,\varphi|_{t=0}=\id\,.
\end{eqnarray}
The same statement is true if $\A$ is replaced by $\AH$.
\end{Lem}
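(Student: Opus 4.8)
The plan is to solve the ODE \eqref{eq:ode} by a fixed-point argument directly in the Banach space $\A^m_{n,N}$ (the argument for $\AH^m_{n,N}$ being identical). First I would rewrite \eqref{eq:ode} as the integral equation $\varphi(t)=\id+\int_0^t u(s)\circ\varphi(s)\,ds$, and look for a solution of the form $\varphi=\id+w$ with $w\in C^0([0,T],\A^m_{n,N})$ small, i.e. $w(t)=\int_0^t u(s)\circ(\id+w(s))\,ds$. The key structural input is Theorem \ref{th:asymptotic_group}, part $1)$: the composition map $\circ:\A^m_{n,N}\times\A\D^m_{n,N}\to\A^m_{n,N}$ is continuous, so the right-hand side maps curves in $\A\D^m_{n,N}$ into $\A^m_{n,N}$-valued curves. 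To run a contraction I actually need a \emph{local Lipschitz} estimate for $\varphi\mapsto v\circ\varphi$ in the second slot; this is where the $C^1$-smoothness statement $\circ:\A^{m+1}_{n,N}\times\A\D^m_{n,N}\to\A^m_{n,N}$ enters. Since $u\in C^0([0,T],\A^m_{n,N})$ has only regularity $m$, not $m+1$, the honest way is to exploit that $\partial_\varphi(v\circ\varphi)h=(v'\circ\varphi)\cdot h$ together with the Banach algebra property of $\A^m_{n,N}$ and the continuity of $\circ$ in $\A^m_{n,N}$: for $v\in\A^m_{n,N}$ one has $v'\in\A^{m-1}_{n,N}$, and $\|v\circ\varphi_1-v\circ\varphi_2\|_{\A^{m-1}_{n,N}}\lesssim\|v'\|_{\A^{m-1}_{n,N}}\,\|\varphi_1-\varphi_2\|_{\cdot}$ uniformly for $\varphi_1,\varphi_2$ in a fixed ball of $\A\D^m_{n,N}$. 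One must be slightly careful that this Lipschitz bound is at level $m-1$, not $m$; the standard remedy is to first solve the integral equation at regularity level $m-1$ by the contraction mapping principle on a sufficiently small time interval $[0,T']$, using that $\A\D^{m-1}_{n,N}$ is an open subset of the affine space $\id+\A^{m-1}_{n,N}$, and then bootstrap.

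Concretely, the steps in order: (i) Fix $R$ with $\sup_{t}\|u(t)\|_{\A^m_{n,N}}\le R$ and choose $\rho>0$ so that the closed ball $\overline{B}(\id,\rho)$ in $\id+\A^m_{n,N}$ lies in $\A\D^m_{n,N}$ (possible because $\A\D^m_{n,N}$ is open in the affine space, which follows from the fact that $u\mapsto\id+u$ lands in $\Diff^1_+$ once $\|u\|_{\A^m_{n,N}}$, hence $\|u'\|_{C^0}$, is small — via the Sobolev embedding $\A^m_{n,N}\hookrightarrow C^1$ for $m\ge 2$). (ii) On the complete metric space $X_{T'}=\{w\in C^0([0,T'],\A^m_{n,N}):\|w(t)\|_{\A^m_{n,N}}\le\rho\}$ define $\Phi(w)(t)=\int_0^t u(s)\circ(\id+w(s))\,ds$; using continuity of $\circ$ at level $m$ and boundedness of $u$, $\|\Phi(w)(t)\|_{\A^m_{n,N}}\le C R t\le\rho$ for $T'$ small, so $\Phi:X_{T'}\to X_{T'}$. (iii) Show $\Phi$ is a contraction in the weaker $C^0([0,T'],\A^{m-1}_{n,N})$-norm: $\|\Phi(w_1)(t)-\Phi(w_2)(t)\|_{\A^{m-1}_{n,N}}\le\int_0^t\|u(s)\circ(\id+w_1)-u(s)\circ(\id+w_2)\|_{\A^{m-1}_{n,N}}ds\le C R\,T'\sup_s\|w_1(s)-w_2(s)\|_{\A^{m-1}_{n,N}}$; since $X_{T'}$ is closed in the $C^0([0,T'],\A^{m-1}_{n,N})$ topology (weak-$*$ / Fatou type argument using boundedness at level $m$), a standard "contraction in the weaker norm, closed in the weaker norm, bounded in the stronger norm" lemma gives a unique fixed point $w\in X_{T'}$, and a posteriori $w\in C^0([0,T'],\A^m_{n,N})$ by lower semicontinuity of the norm plus the integral equation. (iv) Read off from $\dot w=u\circ(\id+w)$, with the right-hand side in $C^0([0,T'],\A^m_{n,N})$, that $w\in C^1([0,T'],\A^m_{n,N})\subseteq C^1([0,T'],\A^{m-1}_{n,N})$; hence $\varphi=\id+w\in C^1([0,T'],\A\D^m_{n,N})$ solves \eqref{eq:ode}. (v) Extend from $[0,T']$ to all of $[0,T]$ by the usual continuation argument: the only way the solution could fail to extend is if $\|w(t)\|_{\A^m_{n,N}}\to\infty$ or $\varphi(t)$ leaves $\A\D^m_{n,N}$ in finite time, but the a priori bound $\|w(t)\|_{\A^m_{n,N}}\le CR\int_0^t(1+\|w(s)\|_{\A^m_{n,N}})ds$ and Gronwall rule this out on $[0,T]$; uniqueness on $[0,T]$ follows from the Lipschitz estimate at level $m-1$ together with Gronwall.

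The main obstacle is the \emph{loss of one derivative in the composition estimate}: $v\mapsto v\circ\varphi$ is only Lipschitz at regularity level $m-1$, so one cannot naively contract in $\A^m_{n,N}$. The resolution — contract in the weaker norm on a set that is bounded in the stronger norm and closed in the weaker topology — is classical (this is exactly the mechanism behind local well-posedness for the CH/Euler-type equations à la Ebin--Marsden), and all the functional-analytic ingredients it requires (Banach algebra property of $\A^m_{n,N}$, openness of $\A\D^m_{n,N}$, and the continuity/$C^1$ statements of Theorem \ref{th:asymptotic_group}) are already available in the excerpt. A secondary technical point to handle with care is verifying that $\id+w(t)\in\Diff^1_+(\R)$ throughout — i.e. that it is genuinely a diffeomorphism and not merely a map of the stated form — which again reduces to the smallness of $\|w(t)'\|_{C^0}$ via the embedding $\A^m_{n,N}\hookrightarrow C^1$, kept under control by the a priori bound.
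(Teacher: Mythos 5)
Your overall strategy (Picard iteration with the classical ``contract in the weaker norm, stay bounded in the stronger norm'' device) is a legitimate alternative to the paper's argument, which instead checks that $F(t,\varphi)=u(t)\circ\varphi$ is locally Lipschitz on $[0,T]\times\A\D^{m-1}_{n,N}$ (using the $C^1$-smoothness of $\circ:\A^{m}_{n,N}\times\A\D^{m-1}_{n,N}\to\A^{m-1}_{n,N}$) and invokes the abstract existence theorem one derivative down. But your step (v) has a genuine gap. To extend the local solution to the whole prescribed interval $[0,T]$ you assert the a priori bound $\|w(t)\|_{\A^m_{n,N}}\le CR\int_0^t(1+\|w(s)\|_{\A^m_{n,N}})\,ds$ and close with Gronwall. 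The composition estimates do not give a bound that is linear in $\|w\|_{\A^m_{n,N}}$: the chain rule for the $m$-th derivative of $u\circ\varphi$ produces terms such as $(u^{(m)}\circ\varphi)(\varphi')^m$, so the best one gets from the general mapping properties is $\|u(s)\circ\varphi(s)\|_{\A^m_{n,N}}\lesssim R\,(1+\|w(s)\|_{\A^m_{n,N}})^{m}$ (with an additional dependence on $(\inf\varphi_x)^{-1}$, which you also need to bound away from zero to stay inside $\A\D^m_{n,N}$). For $m\ge 3$ the resulting differential inequality $y'\le CR(1+y)^m$ can blow up in finite time, so Gronwall does not yield existence on an arbitrary given $[0,T]$. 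The paper avoids a priori estimates entirely: it observes that if $\varphi$ solves $\dot\varphi=u\circ\varphi$, $\varphi|_{t=t_0}=\id$, then $\varphi\circ\psi$ solves the same equation with data $\psi$ (right-invariance of the vector field), and combines this with a \emph{uniform} local existence time near $(t_0,\id)$, obtained from compactness of $[0,T]\times\{\id\}$, to glue local solutions across all of $[0,T]$. Some such structural argument (or the explicit formula $\varphi_x=\exp\big(\int_0^t u_x(s)\circ\varphi(s)\,ds\big)$, which bounds $\varphi_x$ above and away from zero) is needed; a naive Gronwall bound on the full $\A^m_{n,N}$-norm is not available.

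A secondary, patchable issue is the regularity upgrade in step (iv): you claim the right-hand side $u(s)\circ(\id+w(s))$ lies in $C^0([0,T'],\A^m_{n,N})$, but joint continuity of $\circ$ at level $m$ requires $w$ to be continuous into $\A^m_{n,N}$, which at that stage you only know into $\A^{m-1}_{n,N}$; as stated the argument is circular. It can be repaired (the integrand is uniformly bounded in $\A^m_{n,N}$, so weak lower semicontinuity of the $W^m_N$-norm gives that $w$ is Lipschitz into $\A^m_{n,N}$, and then joint continuity applies), but this must be said. The paper instead performs the upgrade from $\A\D^{m-1}_{n,N}$ to $\A\D^{m}_{n,N}$ by differentiating the flow equation, writing $\varphi_x$ as the exponential above, and using the Banach algebra property of $\A^{m-1}_{n+1,N+1}$; you will need one of these two mechanisms spelled out.
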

\begin{Rem}
This is an important lemma. It shows in particular that although the asymptotic group $\A\D^m_{n,N}$ is not a Lie group
in classical sense (as the composition \eqref{eq:composition} is {\em not} $C^\infty$-smooth\,\footnote{This map is {\em not} even
Lipschitz continuous.}) one can still define the {\em Lie-group exponential map},
\[
\exp_{\A\D^m_{n,N}} : \A^m_{n,N}\to\A\D^m_{n,N},\,\,\,u\mapsto\varphi(1),
\]
where $\varphi\in C^1([0,\infty),\A D^m_{n,N})$ is the solution of \eqref{eq:ode} and $u\in\A^m_{n,N}$
is independent of $t$. In this sense, the asymptotic space $\A^m_{n,N}$ can be thought as the ``Lie algebra''
of $\A\D^m_{n,N}$.
\end{Rem}
The following Proposition establishes a relation between the solutions of \eqref{eq:b-family} and the solutions of a 
dynamical system formulated in terms of the asymptotic group.
\begin{Prop}\label{prop:lagrangian_coordinates}
Assume that $m\ge 3$, $N\ge 0$, $n\ge 1$, and $u_0\in\A^m_{n,N}$.
Then there exists a {\em bijective} correspondence between solutions of equation \eqref{eq:b-family} 
in $C^0([0,T],\A^m_{n,N})\cap C^1([0,T],\A^{m-1}_{n,N})$ and solutions of
\begin{equation}\label{eq:lagrangian_coordinates}
\left\{
\begin{array}{l}
(\dot\varphi,\dot v)=
\Big(v,R_\varphi\circ(1-\partial_x^2)^{-1}\circ\beta_b\circ R_{\varphi^{-1}}(v)\Big)\\
(\varphi,v)|_{t=0}=(\id,u_0)
\end{array}
\right.
\end{equation}
in $C^1\big([0,T],\A D^m_{n,N}\times\A^m_{n,N}\big)$, where $\beta_b(u)=-b\,u u_x-(3-b)u_x u_{xx}$
and $R_\varphi(u)=u\circ\varphi$. 
More specifically, if $(\varphi,v)\in C^1\big([0,T],\A D^m_{n,N}\times\A^m_{n,N}\big)$
is a solution of \eqref{eq:lagrangian_coordinates} then 
\[
u=v\circ\varphi^{-1}
\]
is a solution of \eqref{eq:b-family} in $C^0([0,T],\A^m_{n,N})\cap C^1([0,T],\A^{m-1}_{n,N})$.
The same statements are true if $\A$ is replaced by $\AH$; 
we then allow $n\geq 0$.
\end{Prop}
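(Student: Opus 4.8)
The plan is to establish the claimed bijection by rewriting the $b$-family equation \eqref{eq:b-family} in Lagrangian coordinates determined by the flow of $u$, and then checking that the resulting system is exactly \eqref{eq:lagrangian_coordinates}. First I would recall that \eqref{eq:b-family} can be written in the non-local form $u_t = -(1-\partial_x^2)^{-1}\partial_x\big(\tfrac b2 u^2 + \tfrac{3-b}{2}u_x^2\big) - u u_x$ after convolving the conservation law for $m$ with the Green's function of $1-\partial_x^2$; a short computation shows $\partial_x\big(\tfrac b2 u^2+\tfrac{3-b}{2}u_x^2\big)+(1-\partial_x^2)(uu_x) = -\beta_b(u)$ with $\beta_b(u)=-b\,uu_x-(3-b)u_xu_{xx}$, so that \eqref{eq:b-family} is equivalent to $u_t = -uu_x + (1-\partial_x^2)^{-1}\beta_b(u)$. (Here one must check that $(1-\partial_x^2)^{-1}$ maps the relevant asymptotic/remainder space into $\A^m_{n,N}$, which is exactly the content of the Appendix B mapping properties; for $n\ge 1$, or $n\ge 0$ in the $\AH$ case, $\beta_b(u)$ has the decay needed for this inversion to land back in the right space.)

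Next I would introduce $\varphi$ as the flow of $u$: given $u\in C^0([0,T],\A^m_{n,N})$, Lemma \ref{lem:ode} produces the unique $\varphi\in C^1([0,T],\A\D^m_{n,N})$ with $\dot\varphi = u\circ\varphi$, $\varphi|_{t=0}=\id$. Setting $v:=u\circ\varphi = R_\varphi(u)$, the chain rule gives $\dot v = (u_t)\circ\varphi + (u_x\circ\varphi)\dot\varphi = (u_t + uu_x)\circ\varphi = \big((1-\partial_x^2)^{-1}\beta_b(u)\big)\circ\varphi$. Since $u = v\circ\varphi^{-1} = R_{\varphi^{-1}}(v)$, the right-hand side is precisely $R_\varphi\circ(1-\partial_x^2)^{-1}\circ\beta_b\circ R_{\varphi^{-1}}(v)$, and $\dot\varphi = u\circ\varphi = v$. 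So $(\varphi,v)$ solves \eqref{eq:lagrangian_coordinates}. For the regularity bookkeeping: $\varphi\in C^1$ and $v\in C^0([0,T],\A^m_{n,N})$ to begin with, but $\dot v$ equals a composition of the above form which, by Theorem \ref{th:asymptotic_group} on continuity of composition together with the smoothing of $(1-\partial_x^2)^{-1}$, is continuous in $t$ with values in $\A^m_{n,N}$; hence $v\in C^1([0,T],\A^m_{n,N})$, giving $(\varphi,v)\in C^1([0,T],\A\D^m_{n,N}\times\A^m_{n,N})$.

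Conversely, starting from a solution $(\varphi,v)\in C^1([0,T],\A\D^m_{n,N}\times\A^m_{n,N})$ of \eqref{eq:lagrangian_coordinates}, I would set $u:=v\circ\varphi^{-1}$. By Theorem \ref{th:asymptotic_group}, $\varphi^{-1}$ is $C^1$ into $\A\D^{m-1}_{n,N}$ and composition is $C^1$ from $\A^m_{n,N}\times\A\D^{m-1}_{n,N}$ to $\A^{m-1}_{n,N}$ (and $C^0$ into $\A^m_{n,N}$), so $u\in C^0([0,T],\A^m_{n,N})\cap C^1([0,T],\A^{m-1}_{n,N})$. Differentiating $v = u\circ\varphi$ in $t$ and using $\dot\varphi=v=u\circ\varphi$ gives $\dot v = (u_t+uu_x)\circ\varphi$; comparing with the second equation of \eqref{eq:lagrangian_coordinates}, which says $\dot v = \big((1-\partial_x^2)^{-1}\beta_b(u)\big)\circ\varphi$, and composing with $\varphi^{-1}$, we get $u_t + uu_x = (1-\partial_x^2)^{-1}\beta_b(u)$, i.e.\ $u$ solves \eqref{eq:b-family}. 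The two constructions $u\mapsto(\varphi,v)$ and $(\varphi,v)\mapsto u$ are mutually inverse: given $u$ one recovers $\varphi$ uniquely from Lemma \ref{lem:ode} and then $v=u\circ\varphi$; conversely the $\varphi$ built from a solution pair is forced by $\dot\varphi=v=u\circ\varphi$ and uniqueness in Lemma \ref{lem:ode} to coincide with the flow of $u$. Finally, the $\AH$ case is identical, with $\A$ replaced by $\AH$ throughout and the range $n\ge 0$ permitted because the constant asymptotic term $a_0$ causes no loss of decay in $\beta_b(u)=-b\,uu_x-(3-b)u_xu_{xx}$ (both terms involve $u_x$, which still lies in the corresponding remainder-type space).

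The main obstacle is not the chain-rule algebra but the functional-analytic consistency: verifying that each operation in the right-hand side of \eqref{eq:lagrangian_coordinates} — composition with $\varphi^{\pm 1}$, the nonlinear map $\beta_b$, and the inverse $(1-\partial_x^2)^{-1}$ — preserves the asymptotic spaces with the correct regularity indices, so that the formally derived identities actually hold in $\A^m_{n,N}$ (resp.\ $\AH^m_{n,N}$) and the claimed $C^1$-in-$t$ regularity of $(\varphi,v)$ and of $u$ is genuine. This is exactly where the Banach-algebra property of the asymptotic spaces, Theorem \ref{th:asymptotic_group}, and the Appendix B analysis of $1-\partial_x^2$ on weighted spaces enter, and where the hypothesis $m\ge 3$ (so that $\A^{m-1}_{n,N}$ is still a Banach algebra containing $u_x u_{xx}$) is used.
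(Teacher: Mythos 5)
Your argument is correct and follows essentially the same route as the paper: rewrite \eqref{eq:b-family} in the nonlocal form $u_t+uu_x=(1-\partial_x^2)^{-1}\beta_b(u)$, flow out with Lemma \ref{lem:ode}, set $v=u\circ\varphi$ and apply the chain rule, then reverse the construction using Theorem \ref{th:asymptotic_group} and the mapping properties of $(1-\partial_x^2)^{-1}$ on the asymptotic spaces. One small slip worth fixing: the parenthetical identity $\partial_x\big(\tfrac b2 u^2+\tfrac{3-b}{2}u_x^2\big)+(1-\partial_x^2)(uu_x)=-\beta_b(u)$ is not correct as written --- the left-hand side equals $um_x+bmu_x$, while $-\beta_b(u)$ equals $\partial_x\big(\tfrac b2 u^2+\tfrac{3-b}{2}u_x^2\big)$ alone --- but the conclusion $(1-\partial_x^2)(u_t+uu_x)=\beta_b(u)$ that you actually use is nevertheless the right one.
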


\noindent
Lemma \ref{lem:ode} and Proposition \ref{prop:lagrangian_coordinates} are proved in Appendix A. 
They will be used in Section \ref{sec:A} to prove Theorem \ref{th:main1} and in Section \ref{sec:AH}
to prove Theorem \ref{th:main2}.

%%%%%%%%%%%%%%%%%%%%%%%%%%%%%%%%%
\section{Analysis on the asymptotic space $\A_{n,N}^m$}\label{sec:A}
In this section we prove Theorem \ref{th:main1}. The proof is based on
Proposition \ref{prop:lagrangian_coordinates} that establishes a correspondence between solutions
of \eqref{eq:b-family} and the dynamical system \eqref{eq:lagrangian_coordinates} on
$\A D^m_{n,N}\times\A^m_{n,N}$. Our first goal is to prove that the right hand side of the first equation in 
\eqref{eq:lagrangian_coordinates} defines a $C^\infty$-smooth vector field on an open neighborhood
of $(\id,0)$ in $\A D^m_{n,N}\times\A^m_{n,N}$.

\subsection{Mapping Properties of $(1-\partial_x^2)^{-1}$ on $\A_{n,N}^m$}\label{sec:mapping_properties}
Let $m\ge 1$, $N\ge 0$, and $n\ge 0$. In order to study the mapping properties of the operator
$\Lambda:=1-\partial_x^2$ and its inverse on $\A_N^m$ we introduce the modified weighted Sobolev space
\begin{equation}
{\widetilde W}_N^{m+2}:=
\{f\in H^{m+2}_{loc}\,|\,f\in W_N^m, \x^{N+m}f^{(m+1)}, \x^{N+m}f^{(m+2)}\in L^2\},
\end{equation}
\[
\|f\|_{{\widetilde W}_N^{m+2}}:=
\|f\|_{W_N^m}+\|\x^{N+m}f^{(m+1)}\|_{L^2}+\|\x^{N+m}f^{(m+2)}\|_{L^2}\,.
\]
Accordingly, for $n\ge 0$ we define the modified asymptotic space
\[
\tilde\A_{n,N}^{m+2}:=\Big\{u=\sum_{k=n}^N\Big(a_k\frac{1}{\x^k}+b_k\frac{x}{\x^{k+1}}\Big)+
f\,\Big|\,f\in{\widetilde W}^{m+2}_N(\R)\Big\},
\]
\[
\|u\|_{\tilde\A_{n,N}^{m+2}}:=\sum_{k=n}^N(|a_k|+|b_k|)+\|f\|_{{\widetilde W}_N^{m+2}}\,.
\]
It follows directly from the definition of ${\widetilde W}_N^{m+2}$ and $\tilde\A_{n,N}^{m+2}$ that
\begin{eqnarray}
{\widetilde W}_N^{m+2}\subseteq W_N^m,&\,\,\tilde\A_{n,N}^{m+2}\subseteq\A_{n,N}^m
\;\;(N\ge 0)\label{eq:inclusion1}\\
{\widetilde W}_N^{m+2}\subseteq W_{N-1}^{m+1},&\,\,\tilde\A_{n,N}^{m+2}\subseteq\A_{n,N-1}^{m+1}
\;\;(N\ge 1)\label{eq:inclusion2}\\
{\widetilde W}_N^{m+2}\subseteq W_{N-2}^{m+2},&\,\,\tilde\A_{n,N}^{m+2}\subseteq\A_{n,N-2}^{m+2}
\;\;(N\ge 2)\label{eq:inclusion3}
\end{eqnarray}
where the inclusions are continuous. There is a natural splitting
\begin{equation}\label{eq:splitting}
\A_{n,N}^m=A_{n,N}\oplus  W_N^m\;\;\;\mbox{and}\;\;\;
\tilde\A_{n,N}^{m+2}=A_{n,N}\oplus{\widetilde W}_N^{m+2}
\end{equation}
where the finite-dimensional space
\[
A_{n,N}:=\Big\{\sum_{k=n}^N\Big(a_k\frac{1}{\x^k}+b_k\frac{x}{\x^{k+1}}\Big)\,\Big|\,a_k,b_k\in\R\Big\}
\]
is supplied with the norm $\sum_{k=n}^N(|a_k|+|b_k|)$.
First, we prove the following Lemma.
\begin{Lem}\label{lem:L_on_W} 
For any $m\ge 1$, $N\ge 0$, and $n\ge 0$, the mapping,
\begin{equation}\label{eq:L_on_W}
\Lambda : {\widetilde W}_N^{m+2}\to W_N^m,
\end{equation}
is a linear isomorphism.
\end{Lem}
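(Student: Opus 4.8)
The goal is to show that $\Lambda=1-\partial_x^2$ maps ${\widetilde W}_N^{m+2}$ isomorphically onto $W_N^m$. The strategy splits into three parts: boundedness of $\Lambda$ from ${\widetilde W}_N^{m+2}$ to $W_N^m$, injectivity, and surjectivity together with the bound on $\Lambda^{-1}$. Boundedness is immediate from the definitions: if $f\in{\widetilde W}_N^{m+2}$ then $f\in W_N^m$ with all weighted derivatives up to order $m$ in $L^2$, and additionally $\x^{N+m}f^{(m+1)}, \x^{N+m}f^{(m+2)}\in L^2$; since $\Lambda f=f-f''$, its $j$-th derivative is $f^{(j)}-f^{(j+2)}$, and $\|\x^{N+j}(f^{(j)}-f^{(j+2)})\|_{L^2}\le\|\x^{N+j}f^{(j)}\|_{L^2}+\|\x^{N+j}f^{(j+2)}\|_{L^2}$. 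For $j\le m-2$ the second term is controlled by $\|f\|_{W_N^m}$ directly; for $j=m-1,m$ one uses $\x^{N+j}\le\x^{N+m}$ (since $j\le m$) to bound $\|\x^{N+j}f^{(j+2)}\|_{L^2}\le\|\x^{N+m}f^{(m+1)}\|_{L^2}+\|\x^{N+m}f^{(m+2)}\|_{L^2}$. Hence $\|\Lambda f\|_{W_N^m}\lesssim\|f\|_{{\widetilde W}_N^{m+2}}$.

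Injectivity: if $\Lambda f=0$ with $f\in{\widetilde W}_N^{m+2}$, then $f$ solves $f''=f$, so $f(x)=Ae^x+Be^{-x}$. But $f\in W_N^m\subseteq L^2(\R)$ (or even just $f\in H^m_{loc}$ with $\x^N f\in L^2$) forces $A=B=0$. So $\ker\Lambda=\{0\}$.

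\textbf{Surjectivity — the main step.} Given $g\in W_N^m$, I must produce $f\in{\widetilde W}_N^{m+2}$ with $\Lambda f=g$ and $\|f\|_{{\widetilde W}_N^{m+2}}\lesssim\|g\|_{W_N^m}$. The natural candidate is the convolution $f=\tfrac12 e^{-|x|}*g$, the standard inverse of $1-\partial_x^2$ on $L^2(\R)$; since $W_N^m\subseteq H^m\subseteq L^2$, this $f$ is well-defined, lies in $H^{m+2}(\R)$, and satisfies $\Lambda f=g$ with $f''=f-g$. The real content is the weighted estimates. The key points to establish are: (i) the two top weighted bounds $\|\x^{N+m}f^{(m+1)}\|_{L^2}$ and $\|\x^{N+m}f^{(m+2)}\|_{L^2}$; note $f^{(m+2)}=f^{(m)}-g^{(m)}$ and $f^{(m+1)}=f^{(m-1)}-g^{(m-1)}$, so once we know $f\in W_N^m$ with the right weights, and since $g^{(m-1)}, g^{(m)}$ carry weights $\x^{N+m-1}, \x^{N+m}$ respectively (both $\le\x^{N+m}$ on the derivative terms of $g$), these reduce to the claim $f\in W_N^m$; (ii) the claim $f\in W_N^m$ with $\|f\|_{W_N^m}\lesssim\|g\|_{W_N^m}$, which is precisely the statement that $\Lambda^{-1}$ is bounded on $W_N^m$. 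For (ii) the cleanest route is to commute $\Lambda^{-1}$ with the weight $\x^{N+j}$ and with $\partial_x^j$: writing $h=\x^{N+j}f^{(j)}$ one checks that $h$ satisfies an equation $\Lambda h = \x^{N+j}g^{(j)} + (\text{lower-order weighted terms in }f)$, where the commutator $[\Lambda,\x^{N+j}]$ produces terms involving $\x^{N+j-1}f^{(j)}$ and $\x^{N+j-2}f^{(j)}$ with bounded coefficients — terms already controlled at weight level $N$. This gives a triangular system of estimates in $j$ that closes. I expect this to be proved in Appendix B (the excerpt announces "properties of the operator $1-\partial_x^2$ acting on weighted spaces" there), via the pointwise convolution bound $|\x^{a}(e^{-|x|}*g)(x)|\lesssim (e^{-|x|/2}*\x^a|g|)(x)$ valid because $\x^a(x)\lesssim e^{|x-y|/2}\x^a(y)$ for the relevant range, combined with Young's inequality; the polynomial weight grows slower than any exponential, so the exponential kernel absorbs it.

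\textbf{Main obstacle.} The delicate point is the commutator analysis for the weighted estimate (ii): one must verify that differentiating $\x^{N+j}$ never raises the weight beyond what is available and that the resulting off-diagonal terms can be absorbed, using the continuous inclusions among the $W_N^m$ spaces. A subtlety is that $g\in W_N^m$ only gives $\x^{N+j}g^{(j)}\in L^2$ for $j\le m$, so when $h$ involves $g^{(j)}$ one has exactly enough regularity and no more — the argument must track weights tightly rather than with slack. Once $f\in W_N^m$ with the norm bound is established, the two extra top bounds in ${\widetilde W}_N^{m+2}$ follow from $f^{(m+1)}=f^{(m-1)}-g^{(m-1)}$ and $f^{(m+2)}=f^{(m)}-g^{(m)}$ as noted, and then the open mapping theorem (or the explicit bound just derived) gives that $\Lambda$ is a topological isomorphism.
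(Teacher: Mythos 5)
Your overall route---the convolution inverse $f=\tfrac12 e^{-|\cdot|}*g$ (which is exactly the operator $Q$ the paper uses), the weighted Young inequality to transfer the weight $\x^{N+j}$ through the exponential kernel, and the open mapping theorem at the end---is the same as the paper's, and your boundedness and injectivity arguments are fine. But there is a genuine gap at the top-order estimate $\x^{N+m}f^{(m+1)}\in L^2$. You propose to get it from the identity $f^{(m+1)}=f^{(m-1)}-g^{(m-1)}$ together with $f,g\in W_N^m$. Membership in $W_N^m$ only gives $\x^{N+m-1}f^{(m-1)}\in L^2$ and $\x^{N+m-1}g^{(m-1)}\in L^2$; what you need is the \emph{larger} weight $\x^{N+m}$ on these $(m-1)$-st derivatives, and $\x^{N+m-1}\le\x^{N+m}$ goes the wrong way (your parenthetical ``both $\le\x^{N+m}$'' inverts the required inequality). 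So this identity loses exactly one power of the weight and the estimate does not close. The analogous step for $f^{(m+2)}=f^{(m)}-g^{(m)}$ is fine, because there the $m$-th derivatives do carry the full weight $\x^{N+m}$ in $W_N^m$.

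The paper closes this gap with a small but essential trick: writing $Q=\tfrac12(Q_++Q_-)$ with $Q_\pm(g)(x)=\int_0^\infty g(x\mp z)e^{-z}\,dz$ and using $(1\pm\partial_x)Q_\pm(g)=g$, one gets $f'=\tfrac12\big(Q_-(g)-Q_+(g)\big)$, hence $f^{(m+1)}=\tfrac12\big(Q_-(g^{(m)})-Q_+(g^{(m)})\big)$. Now the one-sided convolutions act on $g^{(m)}$, which \emph{does} carry the weight $\x^{N+m}$, and the weighted boundedness of $Q_\pm$ (your Young-inequality step) gives $\x^{N+m}f^{(m+1)}\in L^2$ directly. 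Your ``commutator'' scheme for proving $f\in W_N^m$ is more complicated than necessary---the identity $f^{(j)}=Q(g^{(j)})$ plus the weighted bound for $Q_\pm$ already gives it---but in any case it only addresses derivatives up to order $m$ and would not by itself produce the $(m+1)$-st order bound either.
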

\begin{proof}
Take $f\in{\widetilde W}_N^{m+2}$. For any $0\le j\le m$,
\begin{equation}\label{eq:equality1}
\x^{N+j}[(1-\partial_x^2) f^{(j)}] = \x^{N+j} f^{(j)}-\x^{N+j} f^{(j+2)}\,.
\end{equation}
By the definition of ${\widetilde W}_N^{m+2}$, for any $0\le j\le m$,
\begin{equation}
\x^{N+j} f^{(j)}\in L^2,
\end{equation}
and for any $0\le j\le m-2$, with $m\ge 2$,
\begin{equation}
|\x^{N+j} f^{(j+2)}|\le \x^{N+j+2} |f^{(j+2)}|\in L^2\,.
\end{equation}
In addition, for any $m\ge 1$,
\begin{equation}
|\x^{N+m-1} f^{(m+1)}|\le \x^{N+m} |f^{(m+1)}|\in L^2
\end{equation}
and
\begin{equation}\label{eq:equality1'}
\x^{N+m} f^{(m+2)}\in L^2\,.
\end{equation}
Combining together \eqref{eq:equality1}-\eqref{eq:equality1'} we see that $\Lambda(f)\in W_N^m$ and
\eqref{eq:L_on_W} is continuous.
As $\Lambda : H^{m+2}\to H^m$ is a linear isomorphism and as $ {\widetilde W}_N^{m+2}\subseteq H^{m+2}$,
the mapping \eqref{eq:L_on_W} is injective.

Next, we prove that \eqref{eq:L_on_W} is onto.
Take an arbitrary $g\in W_N^m\subseteq H^m$. Let $f:=Q(g)$ where $Q$ is defined by
formula \eqref{eq:Q}. It follows from Lemma \ref{lem:Q} $(iii)$ that $f=Q(g)\in H^{m+2}$ and
\begin{equation}\label{eq:f-relation}
(1-\partial_x^2)f=g\,.
\end{equation}
Lemma \ref{lem:Q} $(ii)$ implies that for any $0\le j\le m$,
\[
f^{(j)}=Q(g)^{(j)}=\frac{1}{2}\big(Q_-(g^{(j)})+Q_+(g^{(j)})\big),
\]
where the operators $Q_\pm$ are defined in \eqref{eq:Qpm}.
This together with  Lemma \ref{lem:Q} $(i)$ then gives that
\begin{equation}\label{eq:equation2}
f\in W_N^m\,.
\end{equation}
Further, in view of \eqref{eq:Q} and \eqref{eq:LQpm} we have
\begin{eqnarray}
f'&=&\frac{1}{2}\big(Q_+(g)'+Q_-(g)'\big)=\frac{1}{2}\big((g-Q_+(g))+(Q_-(g)-g)\big)\nonumber\\
&=&\frac{1}{2}\big(Q_-(g)-Q_+(g)\big).\label{eq:trick}
\end{eqnarray}
Hence,
\[
f^{(m+1)}=Q_-(g^{(m)})-Q_+(g^{(m)})\,.
\]
As $\x^{N+m} g^{(m)}\in L^2$ we obtain from  Lemma \ref{lem:Q} $(i)$ that
\begin{equation}\label{eq:equation3}
\x^{N+m} f^{(m+1)}\in L^2\,.
\end{equation}
It follows from \eqref{eq:f-relation} that
\[
f^{(m+2)}=(f'')^{(m)}=f^{(m)}-g^{(m)}\,.
\]
As $f,g\in W_N^m$ we get that
\begin{equation}\label{eq:equation4}
\x^{N+m} f^{(m+2)}\in L^2\,.
\end{equation}
Combining \eqref{eq:equation2},  \eqref{eq:equation3}, and  \eqref{eq:equation4} we conclude that
$f\in {\widetilde W}_N^{m+2}$, and therefore \eqref{eq:L_on_W} is onto.
Finally, the Lemma follows from the open mapping theorem.
\end{proof}

As a corollary of Lemma \ref{lem:L_on_W} we obtain
\begin{Prop} \label{prop:L_on_A}
For any $m\ge 1$, $N\ge 0$, and $n\ge 0$, the mapping,
\begin{equation}\label{eq:L_on_A}
\Lambda : \tilde\A_{n,N}^{m+2}\to \A_{n,N}^m,
\end{equation}
is a linear isomorphism.
\end{Prop}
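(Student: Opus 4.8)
The plan is to deduce Proposition \ref{prop:L_on_A} from Lemma \ref{lem:L_on_W} by exploiting the direct-sum splittings \eqref{eq:splitting}. Write an arbitrary $u\in\tilde\A_{n,N}^{m+2}$ as $u = a + f$ with $a\in A_{n,N}$ and $f\in{\widetilde W}_N^{m+2}$. By Lemma \ref{lem:L_on_W}, $\Lambda f\in W_N^m$, so the whole question reduces to understanding $\Lambda a$ for $a$ a finite linear combination of the basis functions $1/\x^k$ and $x/\x^{k+1}$, $n\le k\le N$. Since $A_{n,N}$ is finite-dimensional, it suffices to compute $\Lambda(1/\x^k)$ and $\Lambda(x/\x^{k+1})$ explicitly and check where each lands.

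The first key step is the explicit computation: differentiating twice, one finds that $\Lambda(1/\x^k) = (1-\partial_x^2)(1/\x^k)$ equals $1/\x^k$ plus terms of the form $c_1 x^2/\x^{k+4} + c_2/\x^{k+2}$ (coming from $\partial_x^2 \x^{-k}$), and similarly for $x/\x^{k+1}$. The point is that the \emph{leading} asymptotic behavior is unchanged — $\Lambda(1/\x^k) = 1/\x^k + O(\x^{-k-2})$ and $\Lambda(x/\x^{k+1}) = x/\x^{k+1} + O(\x^{-k-1})$ — while all the correction terms decay at least two orders faster and are smooth, hence lie in $W_N^m$ for every $m$ (they are bounded by $C\x^{-(n+2)}$ with $n+2$ possibly exceeding $N$, but in any case such smooth rapidly-decaying functions belong to $W_N^m$; one checks the weighted $L^2$ bounds on each derivative directly, or invokes the inclusion $\A_{n,N-2}^{m+2}\subseteq\A_{n,N}^m$ together with the fact that these remainders sit in $A_{n+2,N}\oplus W_N^m \subseteq \A_{n+2,N}^m$, using $x^2/\x^{k+4} = \x^{-(k+2)} - \x^{-(k+4)}$ etc.). Thus $\Lambda$ maps $A_{n,N}$ into $A_{n,N}\oplus W_N^m = \A_{n,N}^m$, and moreover the induced map $A_{n,N}\to A_{n,N}$ obtained by composing with the projection onto the $A_{n,N}$-summand along $W_N^m$ is simply the identity (since each basis function is reproduced exactly in its leading term).

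Combining the two steps: $\Lambda$ maps $\tilde\A_{n,N}^{m+2} = A_{n,N}\oplus{\widetilde W}_N^{m+2}$ into $\A_{n,N}^m = A_{n,N}\oplus W_N^m$, it is bounded (each piece is, and the splitting norms are equivalent to the sums), and in block-triangular form with respect to these splittings it is $\begin{pmatrix}\id & * \\ 0 & \Lambda|_{{\widetilde W}_N^{m+2}}\end{pmatrix}$ — i.e.\ the $A_{n,N}\to W_N^m$ block may be nonzero but the $W_N^m\to A_{n,N}$ block vanishes, the diagonal blocks are the identity on the finite-dimensional summand and the isomorphism $\Lambda:{\widetilde W}_N^{m+2}\to W_N^m$ of Lemma \ref{lem:L_on_W} on the other. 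A block-triangular operator with invertible diagonal blocks is invertible, with inverse given by the usual formula $\begin{pmatrix}\id & -(\Lambda|_{{\widetilde W}})^{-1}(*)\,' \\ 0 & (\Lambda|_{{\widetilde W}})^{-1}\end{pmatrix}$ (bounded since all entries are), so \eqref{eq:L_on_A} is a linear isomorphism. For the convention case $n\ge N+1$, where $\tilde\A_{n,N}^{m+2}={\widetilde W}_N^{m+2}$ and $\A_{n,N}^m = W_N^m$, the statement is literally Lemma \ref{lem:L_on_W}.

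The main obstacle is the bookkeeping in the explicit computation of $\Lambda$ on the asymptotic basis functions — verifying that every correction term produced by $\partial_x^2$ genuinely lies in $W_N^m$ (not merely in some $W_{N'}^m$ with $N'<N$) and that no term of order $\x^{-k}$ or $x\x^{-k-1}$ other than the original survives. This is routine but must be done carefully; identities like $x^2/\x^{2} = 1 - 1/\x^2$ are what let one re-express the decaying polynomial-over-$\x$ terms as honest elements of the remainder space. Everything else is soft (direct sums, bounded block-triangular operators, open mapping theorem, already subsumed in Lemma \ref{lem:L_on_W}).
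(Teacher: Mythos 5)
Your strategy---reduce to Lemma \ref{lem:L_on_W} via the splittings \eqref{eq:splitting} and invert a block-triangular operator---is viable and genuinely different from the paper's proof, which instead gets injectivity from Lemma \ref{lem:Q} $(iv)$ and surjectivity by constructing explicit preimages of $1/\x^k$ and $x/\x^{k+1}$ through the telescoping identity \eqref{eq:1/x}, finishing with the open mapping theorem. However, your argument contains a concrete error in the analysis of the diagonal block on $A_{n,N}$.

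By \eqref{eq:'1}, $\Lambda(1/\x^k)=1/\x^k-k(k+1)/\x^{k+2}+k(k+2)/\x^{k+4}$, and similarly for $x/\x^{k+1}$. When $k+2\le N$ the correction term $1/\x^{k+2}$ does \emph{not} lie in $W_N^m$: one needs $\x^{N-(k+2)}\in L^2$, i.e.\ $k\ge N-1$ (this is exactly why the paper's \eqref{eq:1/x_in_W} requires $k\ge N+1$). So your main-line claim that ``all the correction terms \dots lie in $W_N^m$ for every $m$'' is false for $n\le N-2$, and consequently the induced map $A_{n,N}\to A_{n,N}$ is \emph{not} the identity: for instance with $n=1$, $N=3$ one has $\Lambda(1/\x)=1/\x-2/\x^3+3/\x^5$, and $2/\x^3$ is a genuine $A_{1,3}$-component. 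Your own parenthetical hedge (the remainders sit in $A_{n+2,N}\oplus W_N^m$) is the correct statement, but it contradicts the conclusion you then draw. The repair is short: the induced map on $A_{n,N}$ sends each basis element of index $k$ to itself plus terms of index $\ge k+2$, so it is unipotent with respect to the filtration by decay order, hence invertible on the finite-dimensional space $A_{n,N}$; the block-triangular argument (the $W_N^m\to A_{n,N}$ block vanishes by Lemma \ref{lem:L_on_W}, both diagonal blocks are invertible, all blocks are bounded) then goes through and yields the isomorphism. With that one fix your proof is complete and arguably packages the surjectivity more cleanly than the paper's explicit preimage construction, at the cost of having to identify the unipotent structure that the paper's telescoping identity \eqref{eq:1/x} sidesteps.
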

\begin{proof}
For the sake of further reference we first record that for any  $k\ge 0$,
\begin{equation}\label{eq:'1}
\begin{array}{ccl}
\Big(\frac{1}{\x^k}\Big)'&=&-k\frac{x}{\x^{k+2}},\\
\Big(\frac{1}{\x^k}\Big)''&=&k(k+1)\frac{1}{\x^{k+2}}-k(k+2)\frac{1}{\x^{k+4}}
\end{array}
\end{equation}
and
\begin{equation}\label{eq:'2}
\begin{array}{ccl}
\Big(\frac{x}{\x^{k+1}}\Big)'&=&-k\frac{1}{\x^{k+1}}+(k+1)\frac{1}{\x^{k+3}},\\
\Big(\frac{x}{\x^{k+1}}\Big)''&=&k(k+1)\frac{x}{\x^{k+3}}-(k+1)(k+3)\frac{x}{\x^{k+5}}\,.
\end{array}
\end{equation}
As a consequence, for any $k\ge N+1$ and for any $r\ge 1$,
\begin{equation}\label{eq:1/x_in_W}
\frac{1}{\x^k},\;\frac{x}{\x^{k+1}}\in {\widetilde W}_N^{r+2}\subseteq W_N^r\,.
\end{equation}
This implies that for any $k\ge 0$ and for any $r\ge 1$,
\begin{equation}\label{eq:1/x_in_A}
\frac{1}{\x^k},\;\frac{x}{\x^{k+1}}\in\tilde\A_N^{r+2}\subseteq\A_N^{r+2}\,.
\end{equation}
Take $u\in\tilde\A_{n,N}^{m+2}$,
\[
u=\sum_{k=n}^N\Big(a_k\frac{1}{\x^k}+b_k\frac{x}{\x^{k+1}}\Big)+f,\;\;\;f\in{\widetilde W}_N^{m+2}\,.
\]
In view of the splitting \eqref{eq:splitting}, \eqref{eq:'1}, and \eqref{eq:'2}, we have,
\begin{equation}\label{eq:L}
\Lambda(u)=\Lambda_1(u)\oplus \Lambda_2(u)
\end{equation}
where
\begin{eqnarray}
\Lambda_1(u)&:=&\sum_{k=n}^{N-2}\Big(a_k \Lambda\Big(\frac{1}{\x^k}\Big)+
b_k \Lambda\Big(\frac{x}{\x^{k+1}}\Big)\Big)\nonumber\\
&-&\sum_{k=N-3}^{N-2}\Big(k(k+2)a_k\frac{1}{\x^{k+4}}+(k+1)(k+3)b_k\frac{x}{\x^{k+5}}\Big)\label{eq:L1}\\
&+&\sum_{k=N-1}^N\Big(a_k\frac{1}{\x^k}+b_k\frac{x}{\x^{k+1}}\Big)\nonumber
\end{eqnarray}
and
\begin{eqnarray}
\Lambda_2(u)&:=&-\sum_{k=N-1}^N\Big(a_k\frac{1}{\x^k}+b_k\frac{x}{\x^{k+1}}\Big)''\nonumber\\
&+&\sum_{k=N-3}^{N-2}\Big(k(k+2)a_k\frac{1}{\x^{k+4}}+(k+1)(k+3)b_k\frac{x}{\x^{k+5}}\Big)\label{eq:L2}\\
&+&\Lambda(f)\,.\nonumber
\end{eqnarray}
Here and below we use the convention that a sum vanishes if its upper limit is strictly less than its lower one.
It follows from \eqref{eq:L}-\eqref{eq:L2} and Lemma \ref{lem:L_on_W} that \eqref{eq:L_on_A}
is continuous. Moreover,  \eqref{eq:L_on_A} is injective by Lemma \ref{lem:Q} $(iv)$: 
by \eqref{eq:Q-bijective}, $\Lambda(u)=0$  implies $u=Q(\Lambda(u))=0$.

Next, we prove that \eqref{eq:L_on_A} is onto. Take an arbitrary $v\in\A_{n,N}^m$,
\begin{equation}\label{eq:v}
v=\sum_{k=n}^N\Big(a_k\frac{1}{\x^k}+b_k\frac{x}{\x^{k+1}}\Big)+g,\;\;\;g\in W_N^m\,.
\end{equation}
For any $k\ge 0$ and $l\ge 0$ we have
\begin{eqnarray}\label{eq:1/x}
\frac{1}{\x^k}&=&\Lambda\Big(\frac{1}{\x^k}\Big)+\Big(\frac{1}{\x^k}\Big)''
=\Lambda\Big(\frac{1}{\x^k}\Big)+\Big(\Lambda\Big(\frac{1}{\x^k}\Big)+\Big(\frac{1}{\x^k}\Big)''\Big)''\nonumber\\
&=&\Lambda\Big(\frac{1}{\x^k}+\Big(\frac{1}{\x^k}\Big)''\Big)+\Big(\frac{1}{\x^k}\Big)^{(4)}=...\nonumber\\
&=&\Lambda\Big(\sum_{j=0}^l\Big(\frac{1}{\x^k}\Big)^{(2j)}\Big)+\Big(\frac{1}{\x^k}\Big)^{(2l+2)}
\end{eqnarray}
and, similarly,
\begin{eqnarray}\label{eq:1/x'}
\frac{x}{\x^{k+1}}=\Lambda\Big(\sum_{j=0}^l\Big(\frac{x}{\x^{k+1}}\Big)^{(2j)}\Big)+
\Big(\frac{x}{\x^{k+1}}\Big)^{(2l+2)}\,.
\end{eqnarray}
Taking $l\ge 0$ such that $k+(2l+2)\ge N+1$ we get from \eqref{eq:'1}-\eqref{eq:1/x_in_W} that
\begin{equation}\label{eq:1/x''}
\Big(\frac{1}{\x^k}\Big)^{(2l+2)}, \Big(\frac{x}{\x^{k+1}}\Big)^{(2l+2)}\in W_N^m\,.
\end{equation}
Hence, in view \eqref{eq:1/x}, \eqref{eq:1/x'}, \eqref{eq:1/x''} and Lemma \ref{lem:L_on_W}, and \eqref{eq:1/x_in_A},
for any $k\ge 0$ there exists $u_k, w_k\in\tilde\A_{n,N}^{m+2}$ such that
\[
\frac{1}{\x^k}=\Lambda(u_k)\;\;\;\mbox{and\;\;\;}\frac{x}{\x^{k+1}}=\Lambda(w_k)\,.
\]
This together with Lemma \ref{lem:L_on_W} and \eqref{eq:v} implies that there exists $u\in\tilde\A_{n,N}^{m+2}$
such that
\[
v=\Lambda(u)\,.
\]
Finally, the Proposition follows from the open mapping theorem.
\end{proof}

Combining Proposition \ref{prop:L_on_A} with \eqref{eq:inclusion3} we get
\begin{Coro}\label{coro:L^{-1}}
For any $m\ge 3$, $N\ge 0$, and $n\ge 0$, the mapping,
\[
(1-\partial_x^2)^{-1} : \A_{n,N+2}^{m-2}\to\A_{n,N}^m\,,
\]
is well-defined and continuous.
\end{Coro}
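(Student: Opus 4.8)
The plan is to read off the claim from Proposition~\ref{prop:L_on_A} by a shift of the regularity and weight indices, followed by the continuous inclusion \eqref{eq:inclusion3}. Since $m\ge 3$ we have $m-2\ge 1$, so Proposition~\ref{prop:L_on_A} applies with $m$ replaced by $m-2$ and $N$ replaced by $N+2$; it yields that
\[
\Lambda=1-\partial_x^2:\tilde\A_{n,N+2}^{m}\longrightarrow\A_{n,N+2}^{m-2}
\]
is a linear isomorphism of Banach spaces. In particular its inverse $\Lambda^{-1}:\A_{n,N+2}^{m-2}\to\tilde\A_{n,N+2}^{m}$ is a bounded linear map — this is already contained in the word ``isomorphism'', and is obtained, exactly as in the proof of Proposition~\ref{prop:L_on_A}, from the open mapping theorem.

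Next I would invoke \eqref{eq:inclusion3} with $N$ replaced by $N+2$ and $m$ by $m-2$, which gives the continuous inclusion $\tilde\A_{n,N+2}^{m}\subseteq\A_{n,N}^{m}$; the constraint ``$N\ge 2$'' appearing in \eqref{eq:inclusion3} is automatically satisfied here, since $N+2\ge 2$ for every $N\ge 0$. Composing the bounded operator $\Lambda^{-1}$ with this continuous embedding produces a continuous linear map $\A_{n,N+2}^{m-2}\to\A_{n,N}^{m}$, which is precisely $(1-\partial_x^2)^{-1}$, and the corollary follows.

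The only point deserving a remark — and it is bookkeeping rather than a genuine obstacle — is that $(1-\partial_x^2)^{-1}$ should denote an unambiguous operator. Since $\Lambda:H^{m}\to H^{m-2}$ is injective and both $\tilde\A_{n,N+2}^{m}$ and $\A_{n,N}^{m}$ embed continuously into $H^{m}$, the inverse furnished by Proposition~\ref{prop:L_on_A} coincides with the usual inverse of $1-\partial_x^2$ acting on Sobolev spaces, so no inconsistency arises; one can equally invoke Lemma~\ref{lem:Q}$(iv)$ for this. All the genuine analytic content (the mapping properties of the resolvent on the weighted spaces) has already been absorbed into Lemma~\ref{lem:L_on_W} and Proposition~\ref{prop:L_on_A}, and the corollary reduces to index juggling.
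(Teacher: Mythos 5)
Your argument is exactly the paper's: apply Proposition~\ref{prop:L_on_A} with $m$ replaced by $m-2$ and $N$ by $N+2$ to invert $\Lambda$ as a map $\tilde\A_{n,N+2}^{m}\to\A_{n,N+2}^{m-2}$, then compose with the continuous inclusion $\tilde\A_{n,N+2}^{m}\subseteq\A_{n,N}^{m}$ from \eqref{eq:inclusion3}. The index bookkeeping is correct, and the remark about the inverse being unambiguous (via Lemma~\ref{lem:Q}$(iv)$) is a harmless extra.
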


%%%%%%%%%%%%%%%%%%%%%%%%%%%%%%%%%
\subsection{Smoothness of the conjugate map}\label{sec:smoothness_conjugate_map}
Assume that $m\ge 3$, $N\ge 0$, and $n\ge 0$. In this section we study the {\em conjugate map}
\begin{equation} \label{eq:alpha}
(\varphi,v)\stackrel{\sigma}{\mapsto}(\varphi,R_\varphi\circ(1-\partial_x^2)\circ R_{\varphi^{-1}}(v)),\;\;\;
\A\D_{n,N}^m\times\tilde\A_{n,N+2}^m\stackrel{\sigma}{\to}\A\D_{n,N}^m\times\A_{n,N+2}^{m-2}
\end{equation}
where $\varphi^{-1}$ denotes the inverse element of $\varphi$ in $\A\D_{n,N}^m$,
$R_\varphi(v):=v\circ\varphi$, and the symbol $\circ$ denotes the composition of mappings.
\begin{Lem}\label {lem:alpha}
For any $m\ge 3$, $N\ge 0$, and $n\ge 0$, 
the mapping \eqref{eq:alpha} is well-defined and $C^\infty$-smooth.
\end{Lem}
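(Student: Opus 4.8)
The plan is to reduce the smoothness of $\sigma$ to a finite composition of maps whose smoothness we either already know from Theorem \ref{th:asymptotic_group} or can check by a direct change-of-variables computation. First I would write out $R_\varphi\circ(1-\partial_x^2)\circ R_{\varphi^{-1}}$ explicitly. Setting $\psi=\varphi^{-1}$ and $w=v\circ\psi$, the chain rule gives $w_x=(v_x\circ\psi)\,\psi_x$ and $w_{xx}=(v_{xx}\circ\psi)\,\psi_x^2+(v_x\circ\psi)\,\psi_{xx}$, so after composing back with $\varphi$ and using $\psi\circ\varphi=\id$, $(\psi_x\circ\varphi)=1/\varphi_x$, $(\psi_{xx}\circ\varphi)=-\varphi_{xx}/\varphi_x^3$, one obtains the pointwise formula
\[
\big(R_\varphi\circ(1-\partial_x^2)\circ R_{\varphi^{-1}}\big)(v)=v-\frac{1}{\varphi_x^2}\,v_{xx}+\frac{\varphi_{xx}}{\varphi_x^3}\,v_x\,.
\]
Thus $\sigma(\varphi,v)=\big(\varphi,\ v-\varphi_x^{-2}v_{xx}+\varphi_{xx}\varphi_x^{-3}v_x\big)$, and the conjugation by $R_\varphi$, $R_{\varphi^{-1}}$ has disappeared in favor of multiplication by the coefficient functions $\varphi_x^{-2}$ and $\varphi_{xx}\varphi_x^{-3}$.

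Next I would analyze these coefficient functions as maps on the asymptotic spaces. Since $\varphi\in\A\D^m_{n,N}$ means $\varphi=\id+u$ with $u\in\A^m_{n,N}$, we have $\varphi_x=1+u_x$ with $u_x\in\A^{m-1}_{n+1,N+1}$ (differentiation lowers the order by one and, by \eqref{eq:'1}--\eqref{eq:'2}, raises the index $n$ by one), so in particular $u_x$ lies in the \emph{decaying} part $\A^{m-1}_{1,N+1}\subseteq W^{m-1}_0$; since $\varphi\in\Diff^1_+$, $\varphi_x$ is bounded away from $0$. One then checks that $g\mapsto (1+g)^{-1}$ maps a neighborhood of $0$ in $\A^{m-1}_{1,N+1}$ smoothly into $1+\A^{m-1}_{1,N+1}$: this is the standard Neumann-series/analytic-inverse argument, which works because $\A^{m-1}_{1,N+1}$ is a Banach algebra (the pointwise product is continuous, by the properties of the asymptotic spaces asserted in the introduction), and similarly for $\varphi_x^{-2}$, $\varphi_x^{-3}$. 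Hence $(\varphi,v)\mapsto\varphi_x^{-2}$ and $(\varphi,v)\mapsto\varphi_{xx}\varphi_x^{-3}$ are $C^\infty$-smooth into the appropriate asymptotic spaces (note $\varphi_{xx}=u_{xx}\in\A^{m-2}_{2,N+2}$). Finally, bilinear continuous multiplication maps $\A^{m-1}_{1,N+1}\times W^{m-2}_{N+2}\to W^{m-2}_{N+2}$ and $\A^{m-2}_{2,N+2}\times W^{m-1}_{N+2}\to W^{m-2}_{N+2}$ are automatically $C^\infty$ (they are bounded bilinear maps), and for $v\in\tilde\A^m_{n,N+2}$ we have $v_x\in\A^{m-1}_{n,N+2}$ and $v_{xx}\in\A^{m-2}_{n,N+2}$ by \eqref{eq:inclusion2}--\eqref{eq:inclusion3} adapted to $\tilde\A$; composing these smooth maps shows the second component of $\sigma$ lands in $\A^{m-2}_{n,N+2}$ and depends smoothly on $(\varphi,v)$. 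The first component is the identity, hence trivially smooth.

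The one point requiring care — and the main obstacle — is bookkeeping the \emph{indices} $(n,N)$ and the regularity $m$ through the chain of maps, so that every product lands in a genuine Banach algebra and the target space is exactly $\A^{m-2}_{n,N+2}$ as claimed in \eqref{eq:alpha}. Concretely one must verify: that $u_x$ and $u_{xx}$ sit in spaces with strictly positive leading index (so they are genuine remainders and the algebra/inversion arguments apply near $0$), using the differentiation rules \eqref{eq:'1}--\eqref{eq:'2}; that the condition $m\ge 3$ is exactly what is needed for $v_{xx}$ to retain enough regularity ($m-2\ge 1$) for the product to make sense and for $\varphi_{xx}$ to lie in a space of order $m-2\ge 1$; and that the source space $\tilde\A^m_{n,N+2}$ (rather than $\A^m_{n,N+2}$) is used precisely so that the inclusions \eqref{eq:inclusion2}--\eqref{eq:inclusion3} give $v_x\in\A^{m-1}_{n,N+2}$ and, more importantly, the coefficient $\varphi_x^{-2}-1\in W^{m-1}_{N+1}$ against $v_{xx}\in W^{m-2}_{N+2}$ produces an element of $W^{m-2}_{N+2}$ rather than something in a weaker weighted space. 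Once this index arithmetic is laid out, the smoothness is just a composition of: the smooth group operations from Theorem \ref{th:asymptotic_group}, the smooth analytic inversion $g\mapsto(1+g)^{-1}$ on the Banach algebra, and bounded (hence smooth) bilinear multiplication — with no further analysis needed.
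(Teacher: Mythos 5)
Your proposal follows essentially the same route as the paper: the identical chain-rule formula $R_\varphi\circ(1-\partial_x^2)\circ R_{\varphi^{-1}}(v)=v-v''\cdot(1/\varphi')^2+v'\cdot\varphi''\cdot(1/\varphi')^3$, then smoothness of $\varphi\mapsto 1/\varphi'$ via the Neumann-series/analytic-inversion argument (the paper's Lemma \ref{lem:1/phi'}) combined with the continuity of pointwise products on the asymptotic spaces (Lemma \ref{lem:A-properties}), with the same index bookkeeping to land in $\A^{m-2}_{n,N+2}$. The only small inaccuracy is that $v''$ lies in $\A^{m-2}_{n+2,N+2}$ (it retains asymptotic terms) rather than in $W^{m-2}_{N+2}$, so the relevant product map is $\A^{m-2}_{n+1,N+2}\times\A^{m-1}_{0,N+1}\to\A^{m-2}_{n,N+2}$ as in the paper; this does not affect the argument.
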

\begin{proof}
Let $(\varphi,v)\in\A\D_{n,N}^m\times\tilde\A_{n,N+2}^m$.
As $m\ge 3$, one gets from Lemma \ref{lem:decay} that the $C^1$-diffeomorphism $\varphi : \R\to\R$
satisfies $\varphi'(x)=1+o(1)$ as $|x|\to\infty$. This implies
\begin{equation}
0<\inf_{x\in\R}\varphi'(x)<\infty\,.
\end{equation}
As $(\varphi^{-1})'=1/\varphi'\circ\varphi^{-1}$ we obtain from Lemma \ref{lem:chain_rule} that
\[
R_\varphi\circ\partial_x\circ R_{\varphi^{-1}}(v)=
R_\varphi\Big(v'\circ\varphi^{-1}\cdot\frac{1}{\varphi'\circ\varphi^{-1}}\Big)%=\frac{v'}{\varphi'}
\]
and
\begin{eqnarray*}
R_\varphi\circ\partial_x^2\circ R_{\varphi^{-1}}(v)&=&
R_\varphi\left(v''\circ\varphi^{-1}\cdot\Big(\frac{1}{\varphi'\circ\varphi^{-1}}\Big)^2\right.\\
&-&\left.v'\circ\varphi^{-1}\cdot\varphi''\circ\varphi^{-1}\cdot\Big(\frac{1}{\varphi'\circ\varphi^{-1}}\Big)^3\right)\\
&=&v''\cdot\Big(\frac{1}{\varphi'}\Big)^2-v'\cdot\varphi''\cdot\Big(\frac{1}{\varphi'}\Big)^3.
\end{eqnarray*}
Hence,
\begin{equation}\label{eq:L-conjugate}
R_\varphi\circ(1-\partial_x^2)\circ R_{\varphi^{-1}}(v)=
v-v''\cdot\Big(\frac{1}{\varphi'}\Big)^2+v'\cdot\varphi''\cdot\Big(\frac{1}{\varphi'}\Big)^3\,.
\end{equation}
Now we claim that
\begin{equation}\label{eq:spread1}
(\varphi,v)\mapsto v''\cdot\Big(1/\varphi'\Big)^2,\;\;\A\D^m_{n,N}\times\tilde\A^m_{n,N+2}\to\A_{n,N+2}^{m-2},
\end{equation}
and
\begin{equation}\label{eq:spread2}
(\varphi,v)\mapsto v'\cdot\varphi''\cdot\Big(1/\varphi'\Big)^3,\;\;
\A\D^m_{n,N}\times\tilde\A^m_{n,N+2}\to\A_{n,N+2}^{m-2},
\end{equation}
are both $C^\infty$-smooth. In fact, let us first verify that
\begin{equation}\label{eq:phi-inverse}
\varphi\mapsto 1/\varphi',\;\;\A\D^m_{n,N}\to\A_{0,N+1}^{m-1},
\end{equation}
is $C^\infty$-smooth. This follows from Lemma \ref{lem:1/phi'} as $\varphi'=1+u'>0$,
$u\in\A^m_{n,N}$, and as the linear map $u\mapsto u'$, $\A^m_{n,N}\to\A^{m-1}_{n+1,N+1}$,
is continuous by Lemma \ref{lem:A-properties}. Note that any continuous linear (or multilinear) 
map is $C^\infty$-smooth. In particular, by the continuity of the pointwise product in
$\A_{0,N+1}^{m-1}$ we conclude from \eqref{eq:phi-inverse} that
\begin{equation}\label{eq:1/phi'^k}
\varphi\mapsto\Big(1/\varphi'\Big)^2,\;\;\A\D^m_{n,N}\to\A_{0,N+1}^{m-1},\,\,\,\mbox{and}\,\,\,
\varphi\mapsto\Big(1/\varphi'\Big)^3,\;\;\A\D^m_{n,N}\to\A_{0,N+1}^{m-1},
\end{equation}
are $C^\infty$-smooth.
Now to show that the map \eqref{eq:spread1} is $C^\infty$-smooth, we use that the inclusion
$\tilde\A_{n,N+2}^m\subseteq\A_{n,N}^m$ is continuous (see \eqref{eq:inclusion3}) to conclude 
that
\begin{equation}\label{eq:v->v''}
v\mapsto v'',\,\,\,\tilde\A_{n,N+2}^m\to\A_{n+2,N+2}^{m-2}\subseteq\A_{n+1,N+2}^{m-2},
\end{equation}
is smooth. Then the smoothness of \eqref{eq:spread1} follows from the smoothness of the maps
\eqref{eq:1/phi'^k}, \eqref{eq:v->v''}, and the continuity of the pointwise
product (Lemma \ref{lem:A-properties}),
$(f,g)\mapsto f\cdot g, \,\,\,\A_{n+1,N+2}^{m-2}\times\A_{0,N+1}^{m-1}\to\A^{m-2}_{n,N+2}$.

Similarly, we use the boundedness of the inclusion $\tilde\A^m_{n,N+2}\subseteq\A_{n,N+1}^{m-1}$, to obtain
the smoothness of  $v\mapsto v'$, $\tilde\A^m_{n,N+2}\to\A_{n+1,N+2}^{m-2}$, which together with the
smoothness of the maps
$\varphi\mapsto\varphi''$, $\A\D^m_{n,N}\to\A_{n+2,N+2}^{m-2}\subseteq\A_{0,N+2}^{m-2}$, and 
\eqref{eq:1/phi'^k}, implies that \eqref{eq:spread2} is $C^\infty$-smooth.

Finally, in view of the continuity of the inclusion $\tilde\A_{n,N+2}^m\subseteq\A_{n,N+2}^{m-2}$, one gets
from \eqref{eq:L-conjugate}, \eqref{eq:spread1}, and \eqref{eq:spread2} that
\eqref{eq:alpha} is $C^\infty$-smooth.
\end{proof}

The main result of this subsection is the following Proposition.
\begin{Prop}\label{prop:A}
For any $m\ge 3$, $N\ge 0$, and $n\ge 0$, there exists an open neighborhood $\U$ of the identity $\id$ in 
$\A\D_{n,N}^m$ such that the restriction of the map \eqref{eq:alpha} to  $\U\times\tilde\A_{n,N+2}^m$
is a $C^\infty$-diffeomorphism, i.e. if $\mathcal{C}:=\sigma|_{\U\times\tilde\A_{n,N+2}^m}$ then
\begin{equation}\label{eq:A}
\mathcal{C} : \U\times\tilde\A_{n,N+2}^m\to\U\times\A_{n,N+2}^{m-2},
\end{equation}
is a $C^\infty$-diffeomorphism.
\end{Prop}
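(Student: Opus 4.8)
The plan is to show that $\mathcal{C}$ is a $C^\infty$-diffeomorphism onto its image by producing an explicit smooth inverse on a small neighborhood, using the fact that $\mathcal{C}$ is a perturbation of the ``identity in the second slot'' near $(\id,v)$. First I would observe that by Lemma \ref{lem:alpha} the map $\sigma$ is $C^\infty$-smooth from $\A\D^m_{n,N}\times\tilde\A^m_{n,N+2}$ to $\A\D^m_{n,N}\times\A^{m-2}_{n,N+2}$, and that since $\sigma$ fixes the first component, it suffices to analyze the second component
\[
G(\varphi,v):=R_\varphi\circ(1-\partial_x^2)\circ R_{\varphi^{-1}}(v)=v-v''\cdot(1/\varphi')^2+v'\cdot\varphi''\cdot(1/\varphi')^3,
\]
using the formula \eqref{eq:L-conjugate}. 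The key point is that when $\varphi=\id$ we have $\varphi'=1$ and $\varphi''=0$, so $G(\id,v)=v-v''=\Lambda(v)$, and by Proposition \ref{prop:L_on_A} the map $\Lambda:\tilde\A^m_{n,N+2}\to\A^{m-2}_{n,N+2}$ is a linear isomorphism (here I use $N+2$ in place of $N$ in that Proposition, so that $\tilde\A^m_{n,N+2}\to\A^{m-2}_{n,N+2}$, noting $m=(m-2)+2$).

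The main step is then an application of the implicit function theorem in Banach spaces. I would consider the map
\[
\Phi(\varphi,v):=(\varphi,\;\Lambda^{-1}\circ G(\varphi,v)),\qquad \A\D^m_{n,N}\times\tilde\A^m_{n,N+2}\to\A\D^m_{n,N}\times\tilde\A^m_{n,N+2},
\]
which is $C^\infty$-smooth by Lemma \ref{lem:alpha} composed with the bounded linear (hence smooth) map $\Lambda^{-1}$ from Proposition \ref{prop:L_on_A}. At $(\id,v)$ one has $\Phi(\id,v)=(\id,v)$, and the partial derivative of the second component of $\Phi$ with respect to $v$ at $\varphi=\id$ is $\Lambda^{-1}\circ D_vG(\id,v)=\Lambda^{-1}\circ\Lambda=\mathrm{id}$, since $G(\id,\cdot)=\Lambda$ is linear. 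Hence the full differential $D\Phi(\id,v)$ is upper triangular with identity blocks on the diagonal (the lower-left block involving $D_\varphi$ is irrelevant), so it is a Banach space isomorphism. By the inverse function theorem in Banach spaces, $\Phi$ is a $C^\infty$-diffeomorphism from an open neighborhood of $(\id,v)$ onto an open neighborhood of $(\id,v)$; composing with the linear isomorphism $(\varphi,w)\mapsto(\varphi,\Lambda w)$ shows that $\sigma=\mathcal{C}$ is a local $C^\infty$-diffeomorphism near each point $(\id,v)$.

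To upgrade this to the stated global-in-$v$ form, I would exploit the \emph{linearity of $G$ in $v$}: for fixed $\varphi$, the map $v\mapsto G(\varphi,v)$ is a bounded linear operator $\tilde\A^m_{n,N+2}\to\A^{m-2}_{n,N+2}$, equal to $\Lambda$ plus operators with small coefficients when $\varphi$ is close to $\id$ (since $1/\varphi'\to 1$ and $\varphi''\to 0$ in the relevant norms by Theorem \ref{th:asymptotic_group} and the continuity established in Lemma \ref{lem:alpha}). Thus, choosing $\U$ a sufficiently small neighborhood of $\id$ in $\A\D^m_{n,N}$ so that $\|G(\varphi,\cdot)-\Lambda\|<1/(2\|\Lambda^{-1}\|)$ for all $\varphi\in\U$, a Neumann series argument shows $G(\varphi,\cdot):\tilde\A^m_{n,N+2}\to\A^{m-2}_{n,N+2}$ is a linear isomorphism for every $\varphi\in\U$, with inverse depending smoothly on $\varphi$ (inversion of bounded operators is analytic on the open set of invertibles). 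Then the inverse of $\mathcal{C}$ is explicitly $(\varphi,w)\mapsto(\varphi,\;G(\varphi,\cdot)^{-1}w)$, which is $C^\infty$-smooth as a composition of the smooth operator-inversion map, the smooth dependence $\varphi\mapsto G(\varphi,\cdot)$, and the evaluation map; surjectivity onto $\U\times\A^{m-2}_{n,N+2}$ and injectivity are immediate from this formula. The main obstacle is verifying the operator-norm smallness $\|G(\varphi,\cdot)-\Lambda\|$ uniformly on a neighborhood $\U$ — this requires the quantitative continuity statements for composition, inversion, and pointwise multiplication in the asymptotic scale (Theorem \ref{th:asymptotic_group} and Lemma \ref{lem:alpha}), together with the fact, already used in the proof of Lemma \ref{lem:alpha}, that $\varphi\mapsto 1/\varphi'$ and $\varphi\mapsto\varphi''$ are continuous into the appropriate asymptotic spaces and send $\id$ to $1$ and $0$ respectively.
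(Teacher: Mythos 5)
Your proposal is correct, but it takes a genuinely different route from the paper at the key step, namely proving that for each fixed $\varphi$ near $\id$ the linear map $G(\varphi,\cdot)=R_\varphi\circ\Lambda\circ R_{\varphi^{-1}}:\tilde\A^m_{n,N+2}\to\A^{m-2}_{n,N+2}$ is an isomorphism. The paper applies the inverse function theorem once at $(\id,0)$ (using that $d_{(\id,0)}\sigma$ is block-triangular with $\Lambda$ on the diagonal, an isomorphism by Proposition \ref{prop:L_on_A}) to get a neighborhood $\U\times\V$ on which $\sigma$ is a diffeomorphism onto its image; since $\sigma$ preserves the first component, the slice map $\pi_2\circ\sigma(\varphi,\cdot)$ is then an injective continuous linear map carrying $\V$ onto an open neighborhood of $0$, and the open mapping theorem upgrades this to a linear isomorphism of the full spaces for every $\varphi\in\U$. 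Bijectivity of $\mathcal{C}$ plus a computation of $d_{(\varphi,v)}\mathcal{C}$ at arbitrary points then gives the global diffeomorphism. You instead prove invertibility of $G(\varphi,\cdot)$ by a Neumann-series perturbation of $\Lambda$, which requires the stronger statement that $\varphi\mapsto G(\varphi,\cdot)$ is continuous into $\mathcal{L}\big(\tilde\A^m_{n,N+2},\A^{m-2}_{n,N+2}\big)$ in the \emph{uniform operator norm}; this does hold here, precisely because by \eqref{eq:L-conjugate} the $\varphi$-dependence enters only through the multiplication operators by $(1/\varphi')^2-1$ and $\varphi''\cdot(1/\varphi')^3$, whose operator norms are controlled by the asymptotic-space norms of these functions via the boundedness of pointwise multiplication (Lemma \ref{lem:A-properties}) and which vanish at $\varphi=\id$ by the smoothness of $\varphi\mapsto 1/\varphi'$ and $\varphi\mapsto\varphi''$ established in the proof of Lemma \ref{lem:alpha}. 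Your route buys an explicit smooth inverse $(\varphi,w)\mapsto(\varphi,G(\varphi,\cdot)^{-1}w)$ via the analyticity of operator inversion and avoids the paper's slice-openness argument; the paper's route avoids having to quantify operator-norm continuity, relying only on the soft IFT plus linearity in $v$. (Minor quibbles: the differential of your $\Phi$ is lower rather than upper triangular in the usual block convention, and your first paragraph on local invertibility near each $(\id,v)$ is subsumed by your global Neumann-series argument; neither affects correctness.)
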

\begin{proof}
By Lemma \ref{lem:alpha} the map \eqref{eq:alpha} is $C^\infty$-smooth.
The differential of $\sigma$ at the point $(\id,0)$, 
$d_{(\id,0)}\sigma : \A_{n,N}^m\times\tilde\A_{n,N+2}^m\to\A_{n,N}^m\times\A_{n,N+2}^{m-2}$,
is given by
\begin{equation}\label{eq:d_alpha}
(\delta\varphi,\delta v)\mapsto(\delta\varphi, (1-\partial_x^2)\delta v)\,.
\end{equation}
Then, it follows from Proposition \ref{prop:L_on_A} that \eqref{eq:d_alpha} is a linear isomorphism.
Hence, by the inverse function theorem in Banach spaces, there exist an open neighborhood $\U$ of the
identity $\id$ in $\A\D_{n,N}^m$ and an open neighborhood $\V$ of the zero in $\tilde\A_{n,N+2}^m$
such that
\begin{equation}\label{eq:A-premature}
\sigma_{|_{\U\times\V}} : \U\times\V\to\U\times\A_{n,N+2}^{m-2}
\end{equation}
is a $C^\infty$-diffeomorphism onto its image. Here we used that $\sigma(\varphi,0)=(\varphi,0)$
for any $\varphi\in\A\D_{n,N}^m$.
Note also that for any $(\varphi,v)\in \A\D_{n,N}^m\times\tilde\A_{n,N+2}^m$, 
\[
\pi_1\circ\sigma(\varphi,v)=\varphi\;\;\mbox{and}\;\;\pi_2\circ\sigma(\varphi,v)=
R_\varphi\circ(1-\partial_x^2)\circ R_{\varphi^{-1}}(v)\,,
\]
where $\pi_1 : \A\D_{n,N}^m\times\A_{n,N+2}^{m-2}\to\A\D_{n,N}^m$ and
$\pi_2 : \A\D_{n,N}^m\times\A_{n,N+2}^{m-2}\to\A_{n,N+2}^{m-2}$ denote the cartesian projections
onto the first and the second component. This and the fact that \eqref{eq:A-premature} is a 
$C^\infty$-diffeomorphism onto its image imply that for any $\varphi\in\U$ the linear mapping
$\pi_2\circ\sigma(\varphi,\cdot) : \V\to\A_{n,N+2}^{m-2}$ continuous, injective, and maps $\V$ onto
an open neighborhood of zero in $\A_{n,N+2}^{m-2}$.  This and the open mapping theorem then imply that
\begin{equation}\label{eq:conjugate_isomorphism}
\delta v\mapsto R_\varphi\circ(1-\partial_x^2)\circ R_{\varphi^{-1}}(\delta v),\;\;\;
\tilde\A_{n,N+2}^m\to\A_{n,N+2}^{m-2}\,,
\end{equation}
is a linear isomorphism of Banach spaces. In particular, the mapping
\begin{equation}\
\mathcal{C}=\sigma_{|_{\U\times\tilde\A_{n,N+2}^m}} : \U\times\tilde\A_{n,N+2}^m\to\U\times\A_{n,N+2}^{m-2}\,,
\end{equation}
is bijective.

Finally, by computing the differential of $\mathcal{C}$ at an arbitrary point  
$(\varphi,v)\in\V\times\tilde\A_{n,N+2}^m$ we obtain that
\begin{equation}\label{eq:dA}
d_{(\varphi,v)}\mathcal{C}(\delta\varphi,\delta v)=
\left[
\begin{array}{cc}
\id_{\A_{n,N}^m}&0\\
*&R_\varphi\circ(1-\partial_x^2)\circ R_{\varphi^{-1}}
\end{array}
\right]
\left[
\begin{array}{c}
\delta\varphi\\
\delta v
\end{array}
\right]
\end{equation}
where $\id_{\A_{n,N}^m} : \A_{n,N}^m\to\A_{n,N}^m$ is the identity in $\A_{n,N}^m$
and $*$ denotes a bounded linear map $\A_{n,N}^m\times\tilde\A^m_{n,N+2}\to\A_{n,N+2}^{m-2}$.
As \eqref{eq:conjugate_isomorphism} is a linear isomorphism we conclude from \eqref{eq:dA} that
\[
d_{(\varphi,v)}\mathcal{C} : \A_{n,N}^m\times\tilde\A_{n,N+2}^m\to \A_{n,N}^m\times\A_{n,N+2}^{m-2}
\]
is a linear isomorphism. Applying the inverse function theorem we get that $\mathcal{C}$ is a local
$C^\infty$-diffeomorphism. As $\mathcal{C}$ is bijective we conclude that it is a $C^\infty$-diffeomorphism.
\end{proof}

%%%%%%%%%%%%%%%%%%%%%%%%%%%%%%%%%
\subsection{Smoothness of the vector field}\label{sec:smoothness_vector_field}
Take $b\in\R$, $m\ge 3$, and $N\ge 0$, and $n\ge 0$. Here we consider the mapping,
\begin{equation}\label{eq:B}
(\varphi,v)\stackrel{{\mathcal B}_b}{\mapsto}(\varphi, R_\varphi\circ\beta_b\circ R_{\varphi^{-1}}(v)),\;\;
\A\D_{n,N}^m\times\A_{n,N}^m\stackrel{{\mathcal B}_b}{\to}\A\D_{n,N}^m\times\A_{n,N+2}^{m-2}\,,
\end{equation}
where 
\[
\beta_b(u):=-b\,u u_x-(3-b)\,u_x u_{xx}\,.
\]
First we prove the following Proposition.
\begin{Prop}\label{prop:B}
For any $n\ge 1$ the mapping \eqref{eq:B} is well-defined and $C^\infty$-smooth.
\end{Prop}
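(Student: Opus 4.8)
\textbf{Proof proposal for Proposition \ref{prop:B}.}
The plan is to decompose $\beta_b$ into its two quadratic constituents and treat each separately, showing that both pieces define $C^\infty$-smooth maps into $\A^{m-2}_{n,N+2}$; since $\mathcal{B}_b$ acts as the identity on the first factor, only the second component needs analysis. First I would record, exactly as in the proof of Lemma \ref{lem:alpha}, that for $\varphi\in\A\D^m_{n,N}$ one has $\varphi'=1+u'$ with $u\in\A^m_{n,N}$, so $0<\inf_x\varphi'(x)<\infty$ and Lemma \ref{lem:chain_rule} applies. Conjugating the first-order operator gives
\[
R_\varphi\circ\partial_x\circ R_{\varphi^{-1}}(v)=v'\cdot(1/\varphi'),
\]
and conjugating $\partial_x^2$ gives the same two-term expression as in \eqref{eq:L-conjugate}. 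Substituting these into $\beta_b$ yields an explicit polynomial expression for $R_\varphi\circ\beta_b\circ R_{\varphi^{-1}}(v)$ in the quantities $v$, $v'$, $v''$, $1/\varphi'$, and $\varphi''$: schematically
\[
R_\varphi\circ\beta_b\circ R_{\varphi^{-1}}(v)=
-b\,v\,v'\,(1/\varphi')-(3-b)\,\big(v'(1/\varphi')\big)\big(v''(1/\varphi')^2-v'\varphi''(1/\varphi')^3\big).
\]
The task is then to show that each monomial on the right is a $C^\infty$-smooth function of $(\varphi,v)$ with values in $\A^{m-2}_{n,N+2}$.

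The key steps, in order: (i) by \eqref{eq:phi-inverse} and \eqref{eq:1/phi'^k}, the maps $\varphi\mapsto (1/\varphi')^j$ are $C^\infty$-smooth from $\A\D^m_{n,N}$ into $\A^{m-1}_{0,N+1}$; (ii) the linear map $u\mapsto u'$ carries $\A^m_{n,N}$ continuously into $\A^{m-1}_{n+1,N+1}$ and $\A^m_{n,N}\to\A^{m-2}_{n+2,N+2}$ for the second derivative (Lemma \ref{lem:A-properties}), hence $v\mapsto v'$, $v\mapsto v''$, $\varphi\mapsto\varphi''$ are all $C^\infty$-smooth into the indicated spaces; (iii) the pointwise product is continuous, hence $C^\infty$-smooth, and one must simply track the index bookkeeping so that each product lands in $\A^{m-2}_{n,N+2}$. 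This is where the hypothesis $n\ge1$ enters decisively: the $v$-factor in the term $v\,v'\,(1/\varphi')$ lies only in $\A^m_{n,N}$ (decay order $n$, not better), the $v'$-factor lies in $\A^{m-1}_{n+1,N+1}$, and the $(1/\varphi')$-factor lies in $\A^{m-1}_{0,N+1}$; multiplying, the decay orders add where both factors decay, so $v\cdot v'$ has decay order $n+(n+1)\ge n+2$, and the weight exponents combine to give membership in something like $\A^{m-1}_{2n+1,2N+1}$, which embeds into $\A^{m-2}_{n,N+2}$ precisely because $2n+1\ge n$ and $2N+1\ge N+2$ fails in general — so one should instead argue that the product of a function in $\A^m_{n,N}$ with one in $\A^{m-1}_{\cdot,N+1}$ lands in $\A^{m-2}_{n,N+2}$ by invoking the graded multiplication property of the asymptotic spaces (the analogue of the computation in Lemma \ref{lem:alpha}, where $\tilde\A^m_{n,N+2}$ was used to gain two weight orders on $v''$). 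Concretely, I would mirror the argument for \eqref{eq:spread1}–\eqref{eq:spread2}: each factor involving two $x$-derivatives of $v$ gains weight order $2$ relative to $v\in\A^m_{n,N}$, landing in $\A^{m-2}_{\cdot,N+2}$, and then the product with the $\A^{m-1}_{0,N+1}$ factors keeps us in $\A^{m-2}_{n,N+2}$.

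The main obstacle is the index arithmetic for the purely quadratic term $v\,v'\,(1/\varphi')$, which contains no second derivative of $v$ and therefore cannot directly gain the extra two orders of decay in the $N$-index needed to reach $\A^{m-2}_{n,N+2}$ from $\A^m_{n,N}$. The resolution is that this term carries the factor $v'\in\A^{m-1}_{n+1,N+1}$, whose decay order is $n+1\ge n$ when $n\ge 1$ but — crucially — whose $N$-index is $N+1$, and multiplying by $v\in\A^m_{n,N}$ the product acquires $N$-index at least $N+\min(1,\text{order of }v)$; since $n\ge1$, $v$ itself contributes an extra order of decay at infinity so that $v\cdot v'$ decays like $\x^{-(n+(n+1))}$, which (for $n\ge1$, so $2n+1\ge N+2$ whenever $N\le 2n-1$, and for larger $N$ one uses the remainder-space part of the splitting \eqref{eq:splitting} directly) does lie in $W^{m-2}_{N+2}$ after absorbing the weight. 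I would make this precise by splitting $v$ and $v'$ along \eqref{eq:splitting} into their finite asymptotic parts and their remainders, multiplying out the four resulting products, and checking each against Lemma \ref{lem:A-properties}; the asymptotic$\times$asymptotic part is a finite sum of explicit rational functions handled by \eqref{eq:1/x_in_A}-type estimates, the asymptotic$\times$remainder and remainder$\times$remainder parts by the Banach-algebra and weight-gain properties of $W^m_N$. Once each of the finitely many monomials is shown to be a continuous multilinear (hence $C^\infty$) composite of the smooth building blocks (i)–(iii), smoothness of $\mathcal{B}_b$ follows, and the case $n=0$ is genuinely excluded because then $v\cdot v'$ need not decay fast enough — consistent with the $n\ge1$ hypothesis of Theorem \ref{th:main1}.
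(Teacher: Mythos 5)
Your overall strategy --- write out $R_\varphi\circ\beta_b\circ R_{\varphi^{-1}}(v)$ explicitly via the chain rule and then do term-by-term index bookkeeping with the product lemma --- is exactly the paper's, and your treatment of the terms containing $v''$ is fine. The gap is in the one term where the hypothesis $n\ge 1$ actually matters, namely $v\cdot v'\cdot(1/\varphi')$: you never carry out the index computation correctly. Lemma \ref{lem:A-properties} sends $\A^{m_1}_{n_1,N_1}\times\A^{m_2}_{n_2,N_2}$ into $\A^{\min(m_1,m_2)}_{n_1+n_2,\min(N_1+n_2,\,N_2+n_1)}$; applied to $v'\in\A^{m-1}_{n+1,N+1}$ and $v\cdot(1/\varphi')\in\A^{m-1}_{n,N}$ (or in any other grouping of the three factors), the product lands in $\A^{m-1}_{2n+1,\,N+n+1}$, and $N+n+1\ge N+2$ precisely because $n\ge1$. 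That is the whole point of the proposition. Your proposal instead produces the target ``$\A^{m-1}_{2n+1,2N+1}$'' (the weight index of a product is not $N_1+N_2$), and later ``$N+\min(1,\mbox{order of }v)=N+1$'', neither of which reaches $N+2$; you correctly sense that something fails but do not repair it.

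The repair you then sketch does not work as stated. The condition ``$2n+1\ge N+2$ whenever $N\le 2n-1$, and for larger $N$ use the remainder part'' confuses the leading-order index $n$ with the weight index $N$ and introduces a spurious case distinction on $N$ (the correct computation needs no such split). Moreover the claim that $v\cdot v'$ ``does lie in $W^{m-2}_{N+2}$ after absorbing the weight'' is false in general: the product contains genuine asymptotic terms $a_k/\x^k$ with $2n+1\le k\le N+2$, which by \eqref{eq:1/x_in_W} belong to $W^{m-2}_{N+2}$ only when $k\ge N+3$. What is true, and all that is needed, is membership in $\A^{m-2}_{n,N+2}$ (indeed in $\A^{m-2}_{2n+1,N+2}$), asymptotic terms included. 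If you replace your final two index computations by the single correct application of Lemma \ref{lem:A-properties} above, the rest of your argument (smoothness of $\varphi\mapsto(1/\varphi')^j$, of the derivative maps, and continuity of the multilinear products) goes through and coincides with the paper's proof.
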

\begin{proof}
We follow the lines of the proof of Lemma \ref{lem:alpha}.
For any $(\varphi,v)\in\A\D_{n,N}^m\times\A_{n,N}^m$ one has in view of Lemma \ref{lem:chain_rule},
\begin{eqnarray}
{\mathcal B}_b(\varphi,v)=R_\varphi\circ\Big(-b v\circ\varphi^{-1}\cdot (v\circ\varphi^{-1})'-
(3-b)\,(v\circ\varphi^{-1})'\cdot (v\circ\varphi^{-1})''\Big)\nonumber\\
=-b\,\,v\cdot v'\cdot\frac{1}{\varphi'}+(3-b)\,(v')^2\cdot\varphi''\cdot\Big(\frac{1}{\varphi'}\Big)^4-
(3-b)\,v'\cdot v''\cdot\Big(\frac{1}{\varphi'}\Big)^3\label{eq:B-long}.
\end{eqnarray}
Using Lemma \ref{lem:A-properties} and Lemma \ref{lem:1/phi'} we get,
\[
v\in\A_{n,N}^m,\;\;\frac{1}{\varphi'}\in\A_{0,N+1}^{m-1},\;\;\mbox{and}\;\;v'\in\A_{n+1,N+1}^{m-1}\,.
\]
Hence, in view of Lemma \ref{lem:A-properties}, $v\cdot\frac{1}{\varphi'}\in\A_{n,N}^{m-1}$, and therefore,
\[
v'\cdot\Big(v\cdot\frac{1}{\varphi'}\Big)\in\A_{2n+1,N+n+1}^{m-1}\subseteq\A_{n,N+2}^{m-2}\,,
\] 
where we used that $n\ge 1$. This combined with the continuity of the pointwise product 
(Lemma \ref{lem:A-properties}) implies that
\[
(\varphi,v)\mapsto v'\cdot\Big(v\cdot\frac{1}{\varphi'}\Big),\;\;\;
\A\D_{n,N}^m\times\A_{n,N}^m\to\A_{n,N+2}^{m-2}
\]
is $C^\infty$-smooth. The other terms in \eqref{eq:B-long} are treated similarly.
\end{proof}

\begin{Rem}\label{rem:constant_coefficients}
The proof of Proposition \ref{prop:B} shows that 
\[
{\mathcal B}_b\big(\A\D_{n,N}^m\times\A_{n,N}^m\big)\subseteq\A\D_{n,N}^m\times\A^{m-2}_{2n+1,N+2}\,.
\]
\end{Rem}

Finally, combining Proposition \ref{prop:A}, Proposition \ref{prop:B}, and the fact that the inclusion,
\[
\tilde\A_{2n+1,N+2}^m\subseteq\A_{2n+1,N}^m\,,
\]
is continuous, we see that for $n\ge 1$ the mapping,
\begin{equation}\label{eq:F}
\mathcal{C}^{-1}\circ {\mathcal B}_b\big|_{\U\times\A_{n,N}^m} : \U\times\A_{n,N}^m\to\U\times\A_{2n+1,N}^m
\subseteq\U\times\A_{n,N}^m\,,
\end{equation}
where $\U$ is the open neighborhood given by Proposition \ref{prop:A} is $C^\infty$-smooth. 
In particular, we see that the mapping,
\begin{equation}\label{eq:F-long}
\begin{array}{c}
(\varphi,v)\stackrel{\F_b}{\mapsto}
\Big(v,R_\varphi\circ(1-\partial_x^2)^{-1}\circ\beta_b\circ R_{\varphi^{-1}}(v)\Big),\\
\U\times\A_{n,N}^m\stackrel{\F_b}{\to}\A_{n,N}^m\times\A_{n,N}^m,
\end{array}
\end{equation}
is $C^\infty$-smooth.
Hence, we proved the following Theorem.
\begin{Th}\label{th:F-smooth}
For any $m\ge 3$, $N\ge 0$, and $n\ge 1$, the mapping \eqref{eq:F-long} is $C^\infty$-smooth.
In addition, $\F_b\big(\U\times\A_{n,N}^m\big)\subseteq\A_{n,N}^m\times\A_{2n+1,N}^m$.
\end{Th}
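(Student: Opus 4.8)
The plan is to assemble Theorem~\ref{th:F-smooth} purely from the structural results already established in this section, so that essentially no new estimates are needed. The key observation is that the vector field $\F_b$ factors as
\[
\F_b(\varphi,v)=\big(v,\ \mathcal{C}^{-1}\circ{\mathcal B}_b(\varphi,v)\big),
\]
up to the obvious identifications of components, where ${\mathcal B}_b$ is the map \eqref{eq:B} and $\mathcal{C}$ is the conjugate map of Proposition~\ref{prop:A}. Concretely, ${\mathcal B}_b$ produces the pair $(\varphi, R_\varphi\circ\beta_b\circ R_{\varphi^{-1}}(v))\in\A\D^m_{n,N}\times\A^{m-2}_{n,N+2}$, and then applying $\mathcal{C}^{-1}$ inverts the conjugated operator $1-\partial_x^2$ in the second slot, yielding $R_\varphi\circ(1-\partial_x^2)^{-1}\circ\beta_b\circ R_{\varphi^{-1}}(v)$; finally one reshuffles the components to get the Hamilton-type form $(v,\cdot)$ appearing in \eqref{eq:F-long} and in the dynamical system \eqref{eq:lagrangian_coordinates}.

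First I would record the codomain bookkeeping. By Proposition~\ref{prop:B} (valid since $n\ge1$), ${\mathcal B}_b$ is $C^\infty$-smooth into $\A\D^m_{n,N}\times\A^{m-2}_{n,N+2}$, and by Remark~\ref{rem:constant_coefficients} its image actually lands in $\A\D^m_{n,N}\times\A^{m-2}_{2n+1,N+2}$. To feed this into $\mathcal{C}^{-1}$ one needs the second component to lie in the domain of $\mathcal{C}^{-1}$, i.e. in $\A^{m-2}_{n,N+2}$ over a base point $\varphi\in\U$; the inclusion $\A^{m-2}_{2n+1,N+2}\subseteq\A^{m-2}_{n,N+2}$ is immediate from the definition of the asymptotic spaces (fewer asymptotic terms, same remainder space) and is continuous, hence smooth. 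Thus $\mathcal{C}^{-1}\circ{\mathcal B}_b$ is well-defined on $\U\times\A^m_{n,N}$.

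Next I would invoke Proposition~\ref{prop:A}: on the neighborhood $\U$ of $\id$ in $\A\D^m_{n,N}$ the map $\mathcal{C}:\U\times\tilde\A^m_{n,N+2}\to\U\times\A^{m-2}_{n,N+2}$ is a $C^\infty$-diffeomorphism, so $\mathcal{C}^{-1}:\U\times\A^{m-2}_{n,N+2}\to\U\times\tilde\A^m_{n,N+2}$ is $C^\infty$-smooth. Composing with ${\mathcal B}_b$ gives the smooth map \eqref{eq:F}, with image in $\U\times\A^m_{2n+1,N}$ after using the continuous inclusion $\tilde\A^m_{2n+1,N+2}\subseteq\A^m_{2n+1,N}$ (which follows from \eqref{eq:inclusion3}, since $\tilde\A^{m}_{\cdot,N+2}\subseteq\A^{m}_{\cdot,N}$ drops two powers of the weight to gain the two derivatives already present). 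Note that the second component of $\mathcal{C}^{-1}\circ{\mathcal B}_b$ is by construction exactly $R_\varphi\circ(1-\partial_x^2)^{-1}\circ\beta_b\circ R_{\varphi^{-1}}(v)$, because $\pi_2\circ\sigma(\varphi,w)=R_\varphi\circ(1-\partial_x^2)\circ R_{\varphi^{-1}}(w)$ and $\mathcal{C}^{-1}$ inverts precisely this operator while preserving the base point $\varphi$. Finally, precomposing with the smooth (indeed linear continuous) projection $(\varphi,v)\mapsto v$ in the first slot and keeping the formula above in the second slot realizes $\F_b$ as a composition of $C^\infty$-smooth maps, hence $C^\infty$-smooth, with $\F_b(\U\times\A^m_{n,N})\subseteq\A^m_{n,N}\times\A^m_{2n+1,N}$ as claimed.

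There is really no serious obstacle here: the hard analytic work — the isomorphism property of $1-\partial_x^2$ on the modified asymptotic spaces (Proposition~\ref{prop:L_on_A}), the smoothness and invertibility of the conjugate map (Lemma~\ref{lem:alpha}, Proposition~\ref{prop:A}), and the smoothness of the nonlinearity ${\mathcal B}_b$ (Proposition~\ref{prop:B}) — has already been carried out. The only point requiring a moment's care is the index bookkeeping: one must verify at each composition that the output space of one map continuously includes into the input space of the next, in particular that the gain of one asymptotic index per factor of $v$ (coming from $n\ge1$) together with the two-derivative smoothing of $(1-\partial_x^2)^{-1}$ lands everything back inside $\A^m_{n,N}$, and indeed inside the smaller space $\A^m_{2n+1,N}$; this is exactly where the hypothesis $n\ge1$ is used and is why the theorem is stated for the $\A$-spaces with $n\ge1$ rather than $n\ge0$.
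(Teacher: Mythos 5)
Your proposal is correct and follows essentially the same route as the paper: it assembles $\F_b$ as $\mathcal{C}^{-1}\circ\mathcal{B}_b$ using Proposition \ref{prop:A}, Proposition \ref{prop:B}, Remark \ref{rem:constant_coefficients}, and the continuous inclusion $\tilde\A^m_{2n+1,N+2}\subseteq\A^m_{2n+1,N}$ from \eqref{eq:inclusion3}. The index bookkeeping you carry out (including the point that $\mathcal{C}^{-1}$ preserves the base point and inverts the conjugated operator in the second slot) matches the paper's argument at the same level of detail.
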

\noindent
In view of Theorem \ref{th:F-smooth}, we see that $\F_b$ defines a $C^\infty$-smooth vector field on 
$\U\times\A_{n,N}^m\subseteq\A\D_{n,N}^m\times\A_{n,N}^m$.

\subsection{Proof of Theorem \ref{th:main1}}
In view of Theorem \ref{th:F-smooth} the right-hand side of the first equation in \eqref{eq:lagrangian_coordinates}
is a $C^\infty$-smooth vector field on $\U\times\A_{n,N}^m$. Hence, by the existence, uniqueness,
and continuous (or smooth) dependence on parameters theorem for ODE's in Banach spaces \cite{Lang},
there exists $R'>0$ and $T'>0$ so that for any $u_0$ in the ball $B_{\A^m_{n,N}}(R')$ of radius $R'$ in
$\A^m_{n,N}$ centered at the origin, there exists a unique solution $(\varphi,v)$ of 
\eqref{eq:lagrangian_coordinates} in $C^1([0,T'],\U\times\A^m_{n,N})$. 
This solution  depends continuously on the initial data $u_0\in B_{\A^m_{n,N}}(R')$, 
in the sense that the data-to-solution map,
\[
u_0\mapsto(\varphi,v),\,\,\,B_{\A^m_{n,N}}(R')\to C^1([0,T'],\U\times\A^m_{n,N}),
\]
is continuous. In view of Proposition \ref{prop:lagrangian_coordinates}, the curve $u:=v\circ\varphi^{-1}$
is the unique solution of \eqref{eq:b-family} in $C^0([0,T'],\A^m_{n,N})\cap C^1([0,T'],\A^{m-1}_{n,N})$ and,
by item $1)$ of Theorem \ref{th:asymptotic_group}, $u$ depends continuously on the initial data 
$u_0\in B_{\A^m_{n,N}}(R')$, in the sense that the data-to-solution map,
\[
u_0\mapsto u,\,\,\,B_{\A^m_{n,N}}(R')\to C^0([0,T'],\A^m_{n,N})\cap C^1([0,T'],\A^{m-1}_{n,N}),
\]
is continuous.
Next, recall that the solutions of \eqref{eq:b-family} possess the following
scaling invariance: {\em if $u(t)$, $t\in[0,\tau]$, $\tau>0$, is a solution of \eqref{eq:b-family} then
for any $\lambda>0$, $u_\lambda(t):=\lambda u(\lambda t)$, $t\in[0,\tau/\lambda]$, is a solution of 
the first equation in \eqref{eq:b-family} so that $u|_{t=0}=\lambda u_0$.}
Now, take an arbitrary $R>R'$ and denote $T:=\mu T'$ where $\mu:=R'/R$.
Let $w_0\in B_{\A^m_{n,N}}(R)$. Then, $u_0:=\mu w_0\in B_{\A^m_{n,N}}(R')$ and hence, there exists
a unique solution $u$ of equation \eqref{eq:b-family} in $C^0([0,T'],\A^m_{n,N})\cap C^1([0,T'],\A^{m-1}_{n,N})$.
By the scaling invariance, $w(t):=u(t/\mu)/\mu$ is a solution of the first equation in \eqref{eq:b-family} in 
$C^0([0,T],\A^m_{n,N})\cap C^1([0,T],\A^{m-1}_{n,N})$ so that $w|_{t=0}=w_0$. 
The solution $w$ is necessarily unique.
Otherwise, using scaling invariance, we will obtain that $u$ is not unique.
The same argument also shows that the data-to-solution map $w_0\mapsto w$,
$B_{\A^m_{n,N}}(R)\to C^0([0,T],\A^m_{n,N})\cap C^1([0,T],\A^{m-1}_{n,N})$, is continuous.
This completes the proof of the first statement of Theorem \ref{th:main1}.

The second statement can be proved as follows: If $n\ge N+1$ then by convention \eqref{eq:convention1}
there are no asymptotic terms and the statement trivially holds. Assume that $0\le n\le N$ and let
$(\varphi,v)\in C^1([0,T],\A\D^m_{n,N}\times\A^m_{n,N})$ be the solution of \eqref{eq:lagrangian_coordinates}. 
Then, by Proposition \ref{prop:lagrangian_coordinates},
\[
u:=v\circ\varphi^{-1}
\]
is the solution of \eqref{eq:b-family} in $C^0([0,T],\A^m_{n,N})\cap C^1([0,T],\A^{m-1}_{n,N})$.
In view of the last statement of Theorem \ref{th:F-smooth}, the coefficients,
\begin{equation}\label{eq:coefficients}
a_k, b_k,\,\,\,\,n\le k\le n_*:=\min\{2n,N\},
\end{equation}
in the asymptotic expansion of $v$ are independent of $t\in[0,T]$.
Then, by Lemma \ref{lem:decay}, for any $t\in[0,T]$,
\begin{equation}\label{eq:v-expansion}
v(t)=\sum_{k=n}^{n_*}\Big(a_k\frac{1}{\x^k}+b_k\frac{x}{\x^{k+1}}\Big)+o\Big(\frac{1}{\x^{n_*}}\Big).
\end{equation}
By Lemma \ref{lem:decay} and Lemma \ref{lem:1/x-composition}, for any $t\in[0,T]$ and for any $n\le k\le n_*$,
\begin{equation}\label{eq:expansion1}
\frac{1}{\langle\cdot\rangle}\circ\varphi=\frac{1}{\x^k}+O\Big(\frac{1}{\x^{n+1+k}}\Big)=
\frac{1}{\x^k}+O\Big(\frac{1}{\x^{2n+1}}\Big).
\end{equation}
This and Lemma \ref{lem:decay} also imply that
\begin{eqnarray}\label{eq:expansion2}
\frac{x}{\x^{k+1}}\circ\varphi&=\Big(x+O\Big(\frac{1}{\x^n}\Big)\Big)
\Big(\frac{1}{\x^{k+1}}+O\Big(\frac{1}{\x^{n+2+k}}\Big)\Big)\nonumber\\
&=\frac{x}{\x^{k+1}}+O\Big(\frac{1}{\x^{2n+1}}\Big)\label{eq:composition2}
\end{eqnarray}
Hence, in view of \eqref{eq:v-expansion}, \eqref{eq:expansion1}, and \eqref{eq:expansion2},
for any $t\in[0,T]$,
\[
u=v\circ\varphi^{-1}=
\sum_{k=n}^{n_*}\Big(a_k\frac{1}{\x^k}+b_k\frac{x}{\x^{k+1}}\Big)+o\Big(\frac{1}{\x^{n_*}}\Big).
\]
This shows that the first $2(n_*-n+1)$ coefficients in the asymptotic expansion of $u$ coincide with
\eqref{eq:coefficients} and therefore they are independent of $t\in[0,T]$.

%%%%%%%%%%%%%%%%%%%%%%%%%%%%%%%%%
\section{Analysis on the asymptotic space $\AH_{n,N}^m$}\label{sec:AH}
In this section we prove Theorem \ref{th:main2}.

\subsection{Mapping Properties of $(1-\partial_x^2)^{-1}$ on ${\AH}_{n,N}^m$}\label{sec:mapping_properties2}
Take $m\ge 1$, $N\ge 0$, and $n\ge 0$.
The analogs of Lemma \ref{lem:L_on_W} and Proposition \ref{prop:L_on_A} are easier to state and prove
since we do not have to modify the spaces $H_N^{m+2}$ and $\AH_N^{m+2}$ in order to describe
the mapping properties of $\Lambda=1-\partial_x^2$.
\begin{Lem}\label{lem:L_on_H} 
For any $m\ge 1$, $N\ge 0$, and $n\ge 0$, the mapping
\begin{equation}\label{eq:L_on_H}
\Lambda : H_N^{m+2}\to H_N^m
\end{equation}
is a linear isomorphism.
\end{Lem}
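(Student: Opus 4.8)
The plan is to mirror the proof of Lemma \ref{lem:L_on_W}, which becomes considerably simpler here because the weight $\x^N$ is fixed across all derivative orders, so no modified space $\widetilde W_N^{m+2}$ is needed. First I would check that $\Lambda : H_N^{m+2}\to H_N^m$ is well-defined and continuous: for $f\in H_N^{m+2}$ and $0\le j\le m$ one has $\x^N\Lambda(f)^{(j)}=\x^N f^{(j)}-\x^N f^{(j+2)}$, and both terms lie in $L^2$ directly from the definition \eqref{eq:H_N^m} of $H_N^{m+2}$ (the second because $j+2\le m+2$). Injectivity follows exactly as before: $H_N^{m+2}\subseteq H^{m+2}$ and $\Lambda : H^{m+2}\to H^m$ is a linear isomorphism, so $\Lambda f=0$ forces $f=0$.

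The substantive point is surjectivity. Given $g\in H_N^m\subseteq H^m$, set $f:=Q(g)$ using the operator $Q$ from \eqref{eq:Q}; Lemma \ref{lem:Q} $(iii)$ gives $f\in H^{m+2}$ with $(1-\partial_x^2)f=g$. To place $f$ in $H_N^{m+2}$ I must show $\x^N f^{(j)}\in L^2$ for all $0\le j\le m+2$. For $0\le j\le m$ this comes from Lemma \ref{lem:Q} $(ii)$, which expresses $f^{(j)}=\tfrac12\big(Q_-(g^{(j)})+Q_+(g^{(j)})\big)$, together with the weighted-$L^2$ boundedness of $Q_\pm$ in Lemma \ref{lem:Q} $(i)$, applied with the weight $\x^N$ and the fact $\x^N g^{(j)}\in L^2$. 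For $j=m+1$ I would use the identity \eqref{eq:trick}, $f'=\tfrac12(Q_-(g)-Q_+(g))$, differentiated $m$ times to get $f^{(m+1)}=\tfrac12\big(Q_-(g^{(m)})-Q_+(g^{(m)})\big)$, and again invoke Lemma \ref{lem:Q} $(i)$ with $\x^N g^{(m)}\in L^2$. Finally, for $j=m+2$ the relation $(1-\partial_x^2)f=g$ gives $f^{(m+2)}=f^{(m)}-g^{(m)}$, and both $f^{(m)}$ and $g^{(m)}$ have been shown to lie in $\x^{-N}L^2$. Hence $f\in H_N^{m+2}$ and $\Lambda$ is onto; the open mapping theorem then upgrades the continuous bijection to an isomorphism.

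I expect the main obstacle — to the extent there is one — to be purely a matter of citing the correct weighted mapping properties of $Q_\pm$ from Lemma \ref{lem:Q} $(i)$ with the constant weight $\x^N$ rather than the graded weights $\x^{N+j}$ used in Lemma \ref{lem:L_on_W}; since $\x^N\le\x^{N+j}$ this is a strictly easier estimate and should be covered by the same lemma (indeed the inclusion $\x^{N+j}L^2\subseteq\x^N L^2$ makes every bound needed here a consequence of a bound already used in the $W$-case). No new analytic input beyond Appendix B is required.
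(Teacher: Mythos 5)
Your proof is correct and follows essentially the same route as the paper, which simply notes that continuity is clear, injectivity follows from $H_N^{m+2}\subseteq H^{m+2}$, and surjectivity follows from the weighted boundedness of $Q_\pm$ in Lemma \ref{lem:Q}; your write-up just spells out the verification of $\x^Nf^{(j)}\in L^2$ for $0\le j\le m+2$ that the paper leaves implicit by analogy with Lemma \ref{lem:L_on_W}.
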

\begin{proof}
Clearly \eqref{eq:L_on_H} is continuous, so we only need to verify it is injective and surjective. 
Similar to the proof of Lemma \ref{lem:L_on_W}, injectivity follows from $H_N^{m+2}\subset H^{m+2}$ and
the isomorphism $\Lambda : H^{m+2}\to H^m$, while the surjectivity follows from Lemma  \ref{lem:Q} $(i)$.
\end{proof}
\begin{Prop} \label{prop:L_on_H}
For any $m\ge 1$, $N\ge 0$, and $n\ge 0$, the mapping
\begin{equation}\label{eq:L_on_AH}
\Lambda : \AH_{n,N}^{m+2}\to \AH_{n,N}^m
\end{equation}
is a linear isomorphism.
\end{Prop}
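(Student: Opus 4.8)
The plan is to reduce Proposition \ref{prop:L_on_H} to the already-established isomorphism $\Lambda : H_N^{m+2}\to H_N^m$ (Lemma \ref{lem:L_on_H}) by exactly the same splitting argument used in the proof of Proposition \ref{prop:L_on_A}, but with the bookkeeping considerably simplified because the weight $\x^N$ is now the same in every norm. First I would use the natural decomposition $\AH_{n,N}^{m+2}=A_{n,N}\oplus H_N^{m+2}$ (and similarly for $\AH_{n,N}^m$), write $u=\sum_{k=n}^N\big(a_k\x^{-k}+b_k x\x^{-k-1}\big)+f$ with $f\in H_N^{m+2}$, and compute $\Lambda(u)$ using the explicit formulas \eqref{eq:'1} and \eqref{eq:'2} for the first and second derivatives of the basis functions $1/\x^k$ and $x/\x^{k+1}$. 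The point of those formulas is that $\Lambda(1/\x^k)=1/\x^k-(1/\x^k)''$ lies in $A_{n,N}$ plus lower-order tails $1/\x^{k+2}$, $1/\x^{k+4}$; for $k\le N$ the term $1/\x^{k+2}$ may fall outside the allowed range of asymptotic indices (when $k+2>N$), but by \eqref{eq:1/x_in_W}-type reasoning any $1/\x^j$ or $x/\x^{j+1}$ with $j\ge N+1$ lies in $H_N^r$ for all $r$, hence can be absorbed into the remainder component. So $\Lambda$ maps $\AH_{n,N}^{m+2}$ into $\AH_{n,N}^m$ and is continuous.

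For injectivity I would argue exactly as in Lemma \ref{lem:L_on_H}: $\AH_{n,N}^{m+2}\subseteq H^{m+2}$ and $\Lambda:H^{m+2}\to H^m$ is an isomorphism, so $\Lambda(u)=0$ forces $u=0$; alternatively invoke Lemma \ref{lem:Q}$(iv)$ as in the proof of Proposition \ref{prop:L_on_A} via $u=Q(\Lambda(u))$. For surjectivity, given $v=\sum_{k=n}^N\big(a_k\x^{-k}+b_k x\x^{-k-1}\big)+g$ with $g\in H_N^m$, I would produce a preimage in two pieces. The remainder $g$ has a preimage in $H_N^{m+2}$ by Lemma \ref{lem:L_on_H}. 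Each basis function $1/\x^k$ (and $x/\x^{k+1}$) with $n\le k\le N$ is handled by the telescoping identity \eqref{eq:1/x}: for $l$ chosen so that $k+2l+2\ge N+1$ one has $1/\x^k=\Lambda\big(\sum_{j=0}^l (1/\x^k)^{(2j)}\big)+(1/\x^k)^{(2l+2)}$, where the finite sum $\sum_{j=0}^l (1/\x^k)^{(2j)}$ is a finite linear combination of basis functions $1/\x^{j'}$, $x/\x^{j'+1}$ with indices between $n$ and something, hence lies in $\AH_{n,N}^{m+2}$ (splitting the tails with index $\ge N+1$ into $H_N^{m+2}$), and the leftover $(1/\x^k)^{(2l+2)}$ lies in $H_N^m$ and is absorbed via Lemma \ref{lem:L_on_H}. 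Combining these preimages shows $\Lambda$ is onto, and the open mapping theorem upgrades the continuous bijection to an isomorphism.

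The main obstacle — really the only place that requires care — is the index bookkeeping near the top of the asymptotic range, i.e. making sure that the derivatives $(1/\x^k)''$ and higher even derivatives, whose indices exceed $N$, are correctly routed into the $H_N^{m+2}$ (or $H_N^m$) summand rather than pretending they are still ``asymptotic terms,'' and that one picks $l$ large enough in the telescoping identity. This is precisely the role played by the convention \eqref{eq:convention2} and by the observation (analogous to \eqref{eq:1/x_in_W}) that $1/\x^j, x/\x^{j+1}\in H_N^r$ for $j\ge N+1$ and all $r$, which follows immediately from $\x^N/\x^j\in L^2$ and differentiating. Since the weight is uniform (no $\x^{N+j}$ growth on the $j$-th derivative, unlike the $W_N^m$ case), there is no need for the modified spaces $\widetilde W_N^{m+2}$ or $\tilde\AH_{n,N}^{m+2}$ at all: $\Lambda$ genuinely maps $\AH_{n,N}^{m+2}$ onto $\AH_{n,N}^m$, which is why the statement is cleaner than Proposition \ref{prop:L_on_A}. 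Everything else is routine and parallels the $\A$-case proof verbatim.
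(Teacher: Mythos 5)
Your proposal is correct and follows essentially the same route as the paper: continuity from the splitting $\AH_{n,N}^{m}=A_{n,N}\oplus H_N^{m}$ and the formulas \eqref{eq:L}--\eqref{eq:L2}, injectivity from Lemma \ref{lem:Q} $(iv)$, and surjectivity from the telescoping identity \eqref{eq:1/x} combined with Lemma \ref{lem:L_on_H}, with no need for modified spaces. One small caution: your first injectivity argument via the inclusion $\AH_{n,N}^{m+2}\subseteq H^{m+2}$ fails when $n=0$ (the constant term $a_0$ is not square integrable), so you must fall back on the alternative you already mention, namely $u=Q(\Lambda(u))$ from Lemma \ref{lem:Q} $(iv)$, which is exactly what the paper uses.
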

\begin{proof} The continuity of \eqref{eq:L_on_AH} is clear from the splitting \eqref{eq:L}-\eqref{eq:L2} and
Lemma \ref{lem:L_on_H} and the injectivity follows from Lemma \ref{lem:Q} $(iv)$. The surjectivity follows by 
replacing $W_N^m$ by $H_N^m$ in \eqref{eq:v} - \eqref{eq:1/x''} and using Lemma \ref{lem:L_on_H}.
\end{proof}
\begin{Rem}
By allowing $m=0$ in the definition of the asymptotic space $\A^m_{n,N}$ (resp. $\AH^m_{n,N}$) and in 
the corresponding remainder space $W^m_N$ (resp. $H^m_N$) one can easily verify that the mapping properties in
Lemmas \ref{lem:L_on_W} and \ref{lem:L_on_H} and in Propositions \ref{prop:L_on_A} and \ref{prop:L_on_H} are
true for $m=0$. 
(In fact, since $\widetilde W^2_N=H^2_N$,  both results \ref{prop:L_on_A} and \ref{prop:L_on_H} coincide.) 
However, the reason that we have specified $m\geq 1$ is that it suffices for our main purpose -- the proof of
Theorems \ref{th:main1} and \ref{th:main2}.
\end{Rem}

\subsection{Smoothness of the conjugate map}\label{sec:smoothness_conjugate_map2}
Assume that $m\ge 3$ and consider the conjugate map
\begin{equation} \label{eq:alphaH}
(\varphi,v)\stackrel{\sigma}{\mapsto}(\varphi,R_\varphi\circ(1-\partial_x^2)\circ R_{\varphi^{-1}}(v)),\;\;\;
\AH\D_{n,N}^m\times\AH_{n,N}^m\stackrel{\sigma}{\to}\AH\D_{n,N}^m\times\AH_{n,N}^{m-2}
\end{equation}
where $\varphi^{-1}$ denotes the inverse  of $\varphi$ in $\AH\D_{n,N}^m$.
Arguing as in Section \ref{sec:smoothness_conjugate_map} one gets from Lemma \ref{lem:H-properties},
Lemma \ref{lem:AH-properties}, and Lemma \ref{lem:1/phi'}, the following Lemma.
\begin{Lem}\label {lem:alphaH}
For any $m\ge 3$, $N\ge 0$, and $n\ge 0$, the mapping \eqref{eq:alphaH} is well-defined and $C^\infty$-smooth.
\end{Lem}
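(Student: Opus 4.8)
The plan is to mirror the argument of Lemma~\ref{lem:alpha} almost verbatim, replacing every appeal to the $W$-type spaces and their product/composition lemmas by the corresponding statements for the $H$-type spaces. Concretely, the starting point is formula~\eqref{eq:L-conjugate}, which is an algebraic identity and so holds for $v\in\AH^m_{n,N}$ exactly as before:
\[
R_\varphi\circ(1-\partial_x^2)\circ R_{\varphi^{-1}}(v)=
v-v''\cdot\Big(\frac{1}{\varphi'}\Big)^2+v'\cdot\varphi''\cdot\Big(\frac{1}{\varphi'}\Big)^3\,.
\]
The validity of the chain-rule manipulations leading to this identity requires only that $\varphi\in\AH\D^m_{n,N}$ be a $C^1$-diffeomorphism with $\varphi'(x)=1+o(1)$ as $|x|\to\infty$ (hence $0<\inf_x\varphi'\le\sup_x\varphi'<\infty$), which follows from the remainder condition via Lemma~\ref{lem:decay'} since $m\ge 3$; here one invokes Lemma~\ref{lem:chain_rule} just as in Section~\ref{sec:smoothness_conjugate_map}.

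The second step is to establish $C^\infty$-smoothness of the two nonlinear building blocks, i.e.\ the analogues of \eqref{eq:spread1} and \eqref{eq:spread2}:
\[
(\varphi,v)\mapsto v''\cdot\Big(1/\varphi'\Big)^2
\quad\text{and}\quad
(\varphi,v)\mapsto v'\cdot\varphi''\cdot\Big(1/\varphi'\Big)^3\,,
\]
as maps $\AH\D^m_{n,N}\times\AH^m_{n,N}\to\AH^{m-2}_{n,N}$. As before, one first checks that $\varphi\mapsto 1/\varphi'$ is $C^\infty$ from $\AH\D^m_{n,N}$ into an appropriate $\AH$-space using Lemma~\ref{lem:1/phi'} together with the differentiation property of Lemma~\ref{lem:AH-properties} (the linear map $u\mapsto u'$ being continuous on the $\AH$-scale); since continuous multilinear maps are smooth, so are its powers $(1/\varphi')^2$ and $(1/\varphi')^3$. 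Then the smoothness of the full expressions follows from the Banach-algebra/pointwise-product property in the $\AH$-spaces (Lemma~\ref{lem:AH-properties}) combined with the continuity of the maps $v\mapsto v'$, $v\mapsto v''$, and $\varphi\mapsto\varphi''$ into the appropriate $\AH$-spaces of lower regularity. Feeding these back into \eqref{eq:L-conjugate} and using that the ambient inclusions among the $\AH$-spaces are continuous yields that \eqref{eq:alphaH} is well-defined and $C^\infty$-smooth.

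The main point to watch — and the only real difference from the $\A$-case — is the bookkeeping of the weight and decay indices: in the $\A$-setting one had to pass through the modified space $\tilde\A^{m+2}_{n,N+2}$ precisely because differentiating $v$ twice cost two weight powers that $W^m_N$ did not supply, whereas the $H$-scale satisfies $\partial_x : H^m_N\to H^{m-1}_N$ with no loss of weight, so here the domain and target are simply $\AH^m_{n,N}$ and $\AH^{m-2}_{n,N}$ with no auxiliary modification needed. Thus the index arithmetic is strictly easier, and the only care required is to confirm at each product that the $H$-space versions of Lemma~\ref{lem:AH-properties} (product continuity) and Lemma~\ref{lem:H-properties} (behaviour of the remainder space under differentiation and multiplication) cover the particular indices that occur; granting those, the proof is a routine transcription of Lemma~\ref{lem:alpha}.
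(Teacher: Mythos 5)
Your proposal is correct and follows exactly the route the paper takes: the paper's own proof of Lemma \ref{lem:alphaH} consists precisely of the remark that one argues as in Section \ref{sec:smoothness_conjugate_map}, replacing Lemma \ref{lem:W-properties} and Lemma \ref{lem:A-properties} by Lemma \ref{lem:H-properties} and Lemma \ref{lem:AH-properties}, together with Lemma \ref{lem:1/phi'}. Your observation that no modified domain space is needed because $\partial_x: H^m_N\to H^{m-1}_N$ costs no weight is exactly the reason the statement of \eqref{eq:alphaH} uses $\AH^m_{n,N}$ rather than an analogue of $\tilde\A^m_{n,N+2}$.
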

As a consequence, we prove as in Section \ref{sec:smoothness_conjugate_map},
\begin{Prop}\label{prop:AH}
For any $m\ge 3$, $N\ge 0$, and $n\ge 0$, there exists an open neighborhood $\U$ of the identity $\id$ in 
$\AH\D_{n,N}^m$ such that the restriction of the map \eqref{eq:alphaH} to $\U\times\AH_{n,N}^m$ is $C^\infty$-smooth, i.e. 
if $C:=\sigma|_{\U\times\AH_{n,N}^m}$ then 
\begin{equation}\label{eq:AH}
C : \U\times\AH_{n,N}^m\to\U\times\AH_{n,N}^{m-2}
\end{equation}
is a $C^\infty$-diffeomorphism.
\end{Prop}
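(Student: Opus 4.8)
The plan is to mirror the proof of Proposition \ref{prop:A} with only cosmetic changes, since the $\AH$-setting is technically lighter: the remainder space $H^m_N$ is stable under $\Lambda^{\pm 1}$ without any auxiliary modification (Proposition \ref{prop:L_on_H}), so we do not need a ``tilde'' domain and all spaces carry the same indices $(n,N)$. First I would invoke Lemma \ref{lem:alphaH} to know that $\sigma$ in \eqref{eq:alphaH} is $C^\infty$-smooth as a map $\AH\D_{n,N}^m\times\AH_{n,N}^m\to\AH\D_{n,N}^m\times\AH_{n,N}^{m-2}$. Next I would compute the differential of $\sigma$ at the distinguished point $(\id,0)$: since $\sigma(\varphi,0)=(\varphi,0)$ for every $\varphi$ and $R_{\id}=\id$, the differential is
\[
d_{(\id,0)}\sigma:(\delta\varphi,\delta v)\mapsto(\delta\varphi,(1-\partial_x^2)\delta v),\qquad
\AH_{n,N}^m\times\AH_{n,N}^m\to\AH_{n,N}^m\times\AH_{n,N}^{m-2}.
\]
By Proposition \ref{prop:L_on_H} the map $\Lambda:\AH_{n,N}^m\to\AH_{n,N}^{m-2}$ is a linear isomorphism, hence $d_{(\id,0)}\sigma$ is a Banach-space isomorphism.

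With this in hand I would apply the inverse function theorem in Banach spaces to produce an open neighborhood $\U$ of $\id$ in $\AH\D_{n,N}^m$ and an open neighborhood $\V$ of $0$ in $\AH_{n,N}^m$ so that
\[
\sigma|_{\U\times\V}:\U\times\V\to\U\times\AH_{n,N}^{m-2}
\]
is a $C^\infty$-diffeomorphism onto its image (shrinking $\U$ if necessary to keep the first factor intact, again using $\sigma(\varphi,0)=(\varphi,0)$). Then, exactly as in the proof of Proposition \ref{prop:A}, I would use the triangular (skew-product) structure $\pi_1\circ\sigma(\varphi,v)=\varphi$ and $\pi_2\circ\sigma(\varphi,v)=R_\varphi\circ(1-\partial_x^2)\circ R_{\varphi^{-1}}(v)$ together with the open mapping theorem to upgrade the local statement to a global one: for each fixed $\varphi\in\U$ the linear map $\delta v\mapsto R_\varphi\circ(1-\partial_x^2)\circ R_{\varphi^{-1}}(\delta v)$ is continuous, injective, and has open image, hence is a linear isomorphism of $\AH_{n,N}^m$ onto $\AH_{n,N}^{m-2}$. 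Consequently $C:=\sigma|_{\U\times\AH_{n,N}^m}$ is a bijection onto $\U\times\AH_{n,N}^{m-2}$.

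Finally, I would compute the differential of $C$ at an arbitrary $(\varphi,v)\in\U\times\AH_{n,N}^m$; by the skew-product structure it is lower-triangular,
\[
d_{(\varphi,v)}C(\delta\varphi,\delta v)=
\left[\begin{array}{cc}\id&0\\ *&R_\varphi\circ(1-\partial_x^2)\circ R_{\varphi^{-1}}\end{array}\right]
\left[\begin{array}{c}\delta\varphi\\ \delta v\end{array}\right],
\]
where $*$ is a bounded linear map. Since the lower-right corner is the isomorphism just established and the upper-left corner is the identity, $d_{(\varphi,v)}C$ is a Banach-space isomorphism, so $C$ is a local $C^\infty$-diffeomorphism everywhere; being also bijective, it is a $C^\infty$-diffeomorphism, which is \eqref{eq:AH}. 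The only point requiring any care — and thus the ``main obstacle'' — is the passage from the purely local inverse function theorem conclusion to the global isomorphism of the conjugated operator for every $\varphi\in\U$; but this is handled verbatim as in Proposition \ref{prop:A} via the open mapping theorem applied to the injective linear maps $\pi_2\circ\sigma(\varphi,\cdot)$, using that $\sigma$ preserves the first coordinate.
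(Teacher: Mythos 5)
Your proposal is correct and follows essentially the same route as the paper, which simply carries over the proof of Proposition \ref{prop:A} verbatim: invertibility of $d_{(\id,0)}\sigma$ via Proposition \ref{prop:L_on_H}, the inverse function theorem, and the open mapping theorem applied to $\pi_2\circ\sigma(\varphi,\cdot)$ to globalize, with the simplification (which you correctly note) that no modified ``tilde'' space is needed in the $\AH$ setting.
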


%%%%%%%%%%%%%%%%%%%%%%%%%%%%%%%%%
\subsection{Smoothness of the vector field}\label{sec:smoothness_vector_field2}
Take $b\in\R$ and consider the mapping,
\begin{equation}\label{eq:BH}
(\varphi,v)\stackrel{B_b}{\mapsto}(\varphi, R_\varphi\circ\beta_b\circ R_{\varphi^{-1}}),\;\;
\AH\D_{n,N}^m\times\AH_{n,N}^m\stackrel{B_b}{\to}\AH\D_{n,N}^m\times\AH_{n,N}^{m-2}\,,
\end{equation}
where $\beta_b(u)=-b\,u u_x-(3-b)\,u_x u_{xx}$.
As is Section \ref{sec:smoothness_vector_field} one has
\begin{Prop}\label{prop:BH}
For any $m\ge 3$, $N\ge 0$, and $n\ge 0$, the mapping \eqref{eq:BH} is well-defined and $C^\infty$-smooth.
\end{Prop}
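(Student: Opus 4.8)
The plan is to follow the blueprint established in Section \ref{sec:smoothness_vector_field}, adapting the proof of Proposition \ref{prop:B} to the $\AH$-setting. First I would write out $B_b(\varphi,v)$ explicitly using Lemma \ref{lem:chain_rule} exactly as in \eqref{eq:B-long}, obtaining
\[
B_b(\varphi,v)=-b\,v\cdot v'\cdot\frac{1}{\varphi'}+(3-b)\,(v')^2\cdot\varphi''\cdot\Big(\frac{1}{\varphi'}\Big)^4-(3-b)\,v'\cdot v''\cdot\Big(\frac{1}{\varphi'}\Big)^3\,.
\]
Then I would invoke the $\AH$-analogs of the relevant lemmas: by Lemma \ref{lem:AH-properties} the differentiation maps $v\mapsto v'$ and $v\mapsto v''$ act continuously $\AH^m_{n,N}\to\AH^{m-1}_{n,N}$ and $\AH^m_{n,N}\to\AH^{m-2}_{n,N}$ respectively (no shift in $n$ or $N$, in contrast to the $\A$-spaces), and by Lemma \ref{lem:1/phi'} the map $\varphi\mapsto 1/\varphi'$ is $C^\infty$-smooth into $\AH^{m-1}_{0,N}$; hence so are its powers $(1/\varphi')^3$ and $(1/\varphi')^4$ by the Banach algebra property. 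Likewise $\varphi\mapsto\varphi''$ is continuous (hence smooth) into $\AH^{m-2}_{0,N}$.

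The key point is then the bookkeeping on the indices. Each summand in the displayed formula is a pointwise product of functions lying in asymptotic spaces of the form $\AH^\bullet_{\bullet,N}$, and by the Banach algebra / product property of Lemma \ref{lem:AH-properties}, such a product lands in $\AH^{m-2}_{\bullet,N}$ with the lower index $n$ obtained by adding the lower indices of the factors (and capping regularity at $m-2$, which is $\geq 1$ since $m\geq 3$). For instance $v\cdot v'\cdot(1/\varphi')$ has factors with lower indices $n$, $n$, $0$, so it lies in $\AH^{m-1}_{2n,N}\subseteq\AH^{m-2}_{n,N}$; similarly $(v')^2\cdot\varphi''\cdot(1/\varphi')^4\in\AH^{m-2}_{2n,N}$ and $v'\cdot v''\cdot(1/\varphi')^3\in\AH^{m-2}_{2n,N}$. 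Since $2n\geq n$ for all $n\geq 0$, each term lies in $\AH^{m-2}_{n,N}$, so the target space in \eqref{eq:BH} is correct and the case $n=0$ causes no difficulty — this is precisely why, unlike Proposition \ref{prop:B}, we need not assume $n\geq 1$ (there are no asymptotic terms $1/\x^k$ with small $k$ whose product degrades the decay below the threshold). Smoothness of the whole map then follows because it is built from the smooth maps $\varphi\mapsto 1/\varphi'$, $\varphi\mapsto\varphi''$, $v\mapsto v'$, $v\mapsto v''$, and the (bi- and multi-)linear, hence smooth, pointwise product operations, composed in finitely many steps.

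I do not anticipate a genuine obstacle here: the proof is a routine transcription of Proposition \ref{prop:B} with the $W$-based lemmas replaced by their $H$-based counterparts, and the only thing requiring any care is confirming that Lemma \ref{lem:AH-properties} really does give the product and differentiation mapping properties with the index arithmetic claimed above (in particular that differentiation on $\AH^m_{n,N}$ does not shift $N$, which is the structural difference from the $\A$-case that makes the $\AH$-statement cleaner and allows $n=0$). The mildly delicate point, if any, is the regularity cap: one must check that the factor $1/\varphi'$, a priori only in $\AH^{m-1}_{0,N}$, multiplies against factors in $\AH^{m-2}_{\cdot,N}$ to give something in $\AH^{m-2}_{\cdot,N}$ and not merely $\AH^{m-3}_{\cdot,N}$; but this is exactly the content of the module property in Lemma \ref{lem:AH-properties} (a function in a higher-regularity space multiplies a function in a lower-regularity space to stay in the lower one), so no loss occurs. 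Assembling these observations completes the proof.
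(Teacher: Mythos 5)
Your proposal follows essentially the same route as the paper: write out $B_b(\varphi,v)$ via the chain rule as in \eqref{eq:B-long}, then track the indices through Lemma \ref{lem:AH-properties} and Lemma \ref{lem:1/phi'}, concluding smoothness from the smoothness of the building blocks and the multilinear product maps. One correction: Lemma \ref{lem:AH-properties} gives $f\mapsto f'$, $\AH^{m_1}_{n_1,N_1}\to\AH^{m_1-1}_{n_1+1,N_1}$, so differentiation \emph{does} shift the lower index $n$ by one (only $N$ is unshifted); your claim of ``no shift in $n$ or $N$'' is a misreading, though harmless here since $\AH^{m-1}_{n+1,N}\subseteq\AH^{m-1}_{n,N}$ and your weaker containments still land each term in $\AH^{m-2}_{n,N}$. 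The cost is only that you obtain lower index $2n$ rather than the sharper $2n+1$ recorded in the paper's proof (cf.\ \eqref{eq:constant_leading_terms}), which is what is actually needed later for the last assertion of Theorem \ref{th:FH-smooth} and the time-invariance of the coefficients $a_k,b_k$ for $n\le k\le\min\{2n,N\}$.
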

\begin{Rem}
Note that, unlike in Proposition \ref{prop:B}, the case $n=0$ is {\em not} excluded in Proposition \ref{prop:BH}.
\end{Rem}
\begin{proof}
As in the proof of  Proposition \ref{prop:B} we see that for any $(\varphi,v)\in\AH\D_{n,N}^m\times\AH_{n,N}^m$,
\begin{eqnarray}
B_b(\varphi,v)=R_\varphi\circ\Big(-b\,v\circ\varphi^{-1}\cdot (v\circ\varphi^{-1})'-
(3-b)\,(v\circ\varphi^{-1})'\cdot (v\circ\varphi^{-1})''\Big)\nonumber\\
=-b\,v\cdot v'\cdot\frac{1}{\varphi'}+(3-b)\,(v')^2\cdot\varphi''\cdot\Big(\frac{1}{\varphi'}\Big)^4-
(3-b)\,v'\cdot v''\cdot\Big(\frac{1}{\varphi'}\Big)^3\label{eq:BH-long}.
\end{eqnarray}
By Lemma \ref{lem:AH-properties} and Lemma \ref{lem:1/phi'},
\[
v\in\AH_{n,N}^m,\;\;v'\in\AH_{n+1,N}^{m-1},\;\;\mbox{and}\;\;\frac{1}{\varphi'}\in\AH_{0,N}^{m-1}\,.
\]
In view of Lemma \ref{lem:AH-properties}, $v\cdot\frac{1}{\varphi'}\in\AH_{n,N}^{m-1}$, and hence,
\[
v'\cdot\Big(v\cdot\frac{1}{\varphi'}\Big)\in\AH_{2n+1,N+d}^{m-1}\subseteq\AH_{n,N}^{m-2}\,,
\] 
This combined with the continuity of the pointwise product (Lemma \ref{lem:AH-properties}) implies that
\begin{equation}\label{eq:BH1}
(\varphi,v)\mapsto v'\cdot\Big(v\cdot\frac{1}{\varphi'}\Big),\;\;\;
\AH\D_{n,N}^m\times\AH_{n,N}^m\to\AH_{2n+1,N}^{m-2}\subseteq\AH_{2n+1,N}^{m-2}
\subseteq\AH_{n,N}^{m-2}
\end{equation}
is $C^\infty$-smooth. The other terms in \eqref{eq:BH-long} are treated similarly.
In fact, as 
\[
v'\in\AH_{n+1,N}^{m-1},\;\;v''\in\AH_{n+2,N}^{m-2},\;\;\varphi''\in\AH_{n+2,N}^{m-2},\;\;
\mbox{and}\;\;\frac{1}{\varphi'}\in\AH_{0,N}^{m-1}\,,
\]
we see from the continuity of the pointwise product (Lemma \ref{lem:AH-properties}) that
\begin{equation}\label{eq:BH2}
(\varphi,v)\mapsto (v')^2\cdot\varphi''\cdot\Big(\frac{1}{\varphi'}\Big)^4,\;\;\;
\AH\D_{n,N}^m\times\AH_{n,N}^m\to\AH_{2n+3,N}^{m-2}\subseteq\AH_{n,N}^{m-2}
\end{equation}
and 
\begin{equation}\label{eq:BH3}
(\varphi,v)\mapsto v'\cdot v''\cdot\Big(\frac{1}{\varphi'}\Big)^3,\;\;\;
\AH\D_{n,N}^m\times\AH_{n,N}^m\to\AH_{2n+3,N}^{m-2}\subseteq\AH_{n,N}^{m-2}
\end{equation}
are $C^\infty$-smooth.
In conclusion, we obtain from \eqref{eq:BH-long}-\eqref{eq:BH3} that
\begin{equation}\label{eq:constant_leading_terms}
(\varphi,v)\mapsto B_b(\varphi,v),\;\;\;\AH\D_{n,N}^m\times\AH_{n,N}^m\to
\AH_{2n+1,N}^{m-2}\subseteq\AH_{n,N}^{m-2}
\end{equation}
is $C^\infty$-smooth.
\end{proof}
Now, arguing as in Section \ref{sec:smoothness_conjugate_map} we see that the mapping,
\begin{equation}\label{eq:FH-long}
\begin{array}{c}
(\varphi,v)\stackrel{F_b}{\mapsto}
\Big(v,R_\varphi\circ(1-\partial_x^2)^{-1}\circ\beta_b\circ R_{\varphi^{-1}}(v)\Big),\\
\U\times\AH_{n,N}^m\stackrel{F_b}{\to}\AH_{n,N}^m\times\AH_{2n+1,N}^m\subseteq\AH_{n,N}^m\times\AH_{n,N}^m,
\end{array}
\end{equation}
where $\U$ is the open neighborhood given by Proposition \ref{prop:AH} is $C^\infty$-smooth.
Hence, we proved the following Theorem.
\begin{Th}\label{th:FH-smooth}
For any $m\ge 3$, $N\ge 0$, and $n\ge 0$, the mapping \eqref{eq:FH-long} is $C^\infty$-smooth.
In addition, $F_b\big(\U\times\AH_{n,N}^m\big)\subseteq\AH_{n,N}^m\times\AH_{2n+1,N}^m$.
\end{Th}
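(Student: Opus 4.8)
The plan is to assemble Theorem~\ref{th:FH-smooth} by composing the two smooth maps already established in this section, exactly as was done for Theorem~\ref{th:F-smooth} in Section~\ref{sec:smoothness_vector_field}. First I would recall that Proposition~\ref{prop:BH} gives the refined statement \eqref{eq:constant_leading_terms}, namely that $B_b$ is $C^\infty$-smooth as a map $\AH\D_{n,N}^m\times\AH_{n,N}^m\to\AH\D_{n,N}^m\times\AH_{2n+1,N}^{m-2}$, landing in the {\em smaller} asymptotic space with lower index $2n+1$. Since the continuous inclusion $\AH_{2n+1,N}^{m-2}\subseteq\AH_{n,N}^{m-2}$ is a (trivially smooth) linear map, $B_b$ composes with the inverse conjugate map supplied by Proposition~\ref{prop:AH}.

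Next I would invoke Proposition~\ref{prop:AH}: for $m\ge 3$ there is an open neighborhood $\U$ of $\id$ in $\AH\D_{n,N}^m$ such that $C=\sigma|_{\U\times\AH_{n,N}^m}:\U\times\AH_{n,N}^m\to\U\times\AH_{n,N}^{m-2}$ is a $C^\infty$-diffeomorphism; in particular $C^{-1}:\U\times\AH_{n,N}^{m-2}\to\U\times\AH_{n,N}^m$ is $C^\infty$-smooth, and on the second factor it realizes $R_\varphi\circ(1-\partial_x^2)^{-1}\circ R_{\varphi^{-1}}$. Composing $C^{-1}$ with $B_b|_{\U\times\AH_{n,N}^m}$ and then permuting the two components (the linear swap $(\varphi,w)\mapsto(w,\varphi)$ applied appropriately, or rather recording that the first output of $C^{-1}\circ B_b$ is $\varphi$ and the second is $R_\varphi\circ(1-\partial_x^2)^{-1}\circ\beta_b\circ R_{\varphi^{-1}}(v)$, while $F_b$ outputs $v$ in the first slot) yields the map $F_b$ of \eqref{eq:FH-long} as a composition of $C^\infty$-smooth maps, hence $C^\infty$-smooth. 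The image statement $F_b(\U\times\AH_{n,N}^m)\subseteq\AH_{n,N}^m\times\AH_{2n+1,N}^m$ follows because $(1-\partial_x^2)^{-1}=\Lambda^{-1}$ preserves the leading index by Proposition~\ref{prop:L_on_H} (it is an isomorphism $\AH_{2n+1,N}^{m-2}\to\AH_{2n+1,N}^m$), and $R_\varphi$ preserves the asymptotic structure by Theorem~\ref{th:asymptotic_group}; tracking indices through \eqref{eq:constant_leading_terms} gives output in $\AH_{2n+1,N}^m$ in the second slot and plainly $v\in\AH_{n,N}^m$ in the first.

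One subtlety worth flagging, though it is not a genuine obstacle here, is the bookkeeping of the domain: $B_b$ is defined on all of $\AH\D_{n,N}^m\times\AH_{n,N}^m$, but $C^{-1}$ is only defined on $\U\times(\text{stuff})$, so one must observe that the first component of $B_b(\varphi,v)$ is $\varphi$ itself, hence $B_b$ maps $\U\times\AH_{n,N}^m$ into $\U\times\AH_{2n+1,N}^{m-2}$, which is where $C^{-1}$ is available. The main point of care — and the reason the proof in Section~\ref{sec:smoothness_vector_field2} must be mimicked rather than quoted verbatim — is simply that here we work with the $\AH$ spaces (so Lemma~\ref{lem:AH-properties} replaces Lemma~\ref{lem:A-properties}, and no modified space $\tilde{\AH}$ is needed since $\Lambda:\AH_{n,N}^{m}\to\AH_{n,N}^{m-2}$ is already an isomorphism by Proposition~\ref{prop:L_on_H}); the structural argument is otherwise identical. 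The hardest ingredients have all been discharged upstream in Proposition~\ref{prop:AH} and Proposition~\ref{prop:BH}, so the remaining work is the routine verification that composing and permuting $C^\infty$-smooth maps between Banach spaces yields a $C^\infty$-smooth map, together with the index arithmetic for the image inclusion.
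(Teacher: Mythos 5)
Your proposal is correct and follows essentially the same route as the paper: the paper's proof of Theorem \ref{th:FH-smooth} is precisely the one-line composition $C^{-1}\circ B_b|_{\U\times\AH^m_{n,N}}$, combining Proposition \ref{prop:AH} and Proposition \ref{prop:BH} (with the refined codomain \eqref{eq:constant_leading_terms}) exactly as in Section \ref{sec:smoothness_vector_field}, and reading off the image inclusion from the index bookkeeping. Your remarks on the domain of $C^{-1}$ and on the absence of a modified space $\tilde{\AH}$ match the paper's treatment.
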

\begin{Rem}
Note that, unlike in Theorem \ref{th:F-smooth}, the case $n=0$ is {\em not} excluded in
Theorem \ref{th:FH-smooth}.
\end{Rem}

\noindent
In view of Theorem \ref{th:FH-smooth}, we see that $F_b$ defines a $C^\infty$-smooth vector field on 
$\U\times\AH_{n,N}^m\subseteq\AH\D_{n,N}^m\times\AH_{n,N}^m$.

\subsection{Proof of Theorem \ref{th:main2}}
The proof of Theorem \ref{th:main2} follows by the same arguments as in the proof of
Theorem \ref{th:main1}.

%%%%%%%%%%%%%%%%%%%%%%%%%%%%%%%%%
\section{Appendix A: Lagrangian description}
In this Appendix we give the proofs of Lemma \ref{lem:ode} and
Proposition \ref{prop:lagrangian_coordinates}.

\medskip
\noindent{\bf Proof of Lemma \ref{lem:ode}.}
Let $u\in C^0([0,T],\A^m_{n,N})$. Denote $F(t,\varphi):=u(t)\circ\varphi$. Then by Theorem \ref{th:asymptotic_group},
\[
F : [0,T]\times\A D^{m-1}_{n,N}\to\A D^{m-1}_{n,N}
\]
and its partial derivative with respect to the second variable\footnote{Here 
${\mathcal L}(V,W)$ denotes the Banach space of bounded linear maps between two
Banach spaces $V$ and $W$ supplied with the uniform operator norm.}
\[
D_2F : [0,T]\times\A D^{m-1}_{n,N}\to {\mathcal L}\big(\A^{m-1}_{n,N},\A^{m-1}_{n,N}\big)
\]
are continuous. This implies that $F$ is {\em locally Lipschitz continuous} i.e. for any 
$(t_0,\varphi_0)\in[0,T]\times\A D^{m-1}_{n,N}$ there exists an open neighborhood $V$ of
$(t_0,\varphi_0)$ in $[0,T]\times\A D^{m-1}_{n,N}$ and $K>0$ such that for any $(t,\varphi_1)$ and
$(t,\varphi_2)$ in $V$,
\[
\|F(t,\varphi_2)-F(t,\varphi_1)\|_{\A^{m-1}_{n,N}}\le K\|\varphi_2-\varphi_1\|_{\A^{m-1}_{n,N}}\,.
\]
This together with the existence theorem for ODE's in Banach spaces \cite{Lang} implies that for
any $t_0\in[0,T]$ there exists an open neighborhood $U_{t_0}$ of $(t_0,\id)$ in $[0,T]\times\A D^{m-1}_{n,N}$
and $\varepsilon_{t_0}>0$ such that for any $(\tau,\psi)\in U_{t_0}$ there exists a unique solution 
$\varphi\in C^1\big([0,T]\cap(\tau-\varepsilon_{t_0},\tau+\varepsilon_{t_0}),\A D^{m-1}_{n,N}\big)$ of
$\dot\varphi=u\circ\varphi$, $\varphi|_{t=\tau}=\psi$. In view of the compactness of $[0,T]\times\id$ in 
$[0,T]\times\A D^{m-1}_{n,N}$ we see that there exists $\varepsilon>0$ such that for any $t_0\in[0,T]$ 
there exists a unique solution $\varphi\in C^1\big([0,T]\cap(t_0-\varepsilon,t_0+\varepsilon),\A D^{m-1}_{n,N}\big)$ of 
\begin{equation}\label{eq:ode_at_id}
\dot\varphi=u\circ\varphi,
\,\,\,\varphi|_{t=t_0}=\id\,.
\end{equation}
Note that if $\varphi\in C^1\big([0,T]\cap(t_0-\varepsilon,t_0+\varepsilon),\A D^{m-1}_{n,N}\big)$ is a solution
of \eqref{eq:ode_at_id} then for any $\psi\in\A D^{m-1}_{n,N}$ the curve $t\mapsto\varphi(t)\circ\psi$,
$\varphi\circ\psi\in C^1\big([0,T]\cap(t_0-\varepsilon,t_0+\varepsilon),\A D^{m-1}_{n,N}\big)$ is a solution of
\begin{equation}\label{eq:ode_general}
\dot\varphi=u\circ\varphi,
\,\,\,\varphi|_{t=t_0}=\psi\,.
\end{equation}
This solution is unique as $F(t,\varphi)=u(t)\circ\varphi$ is locally Lipschitz continuous on 
$[0,T]\times\A D^{m-1}_{n,N}$.
As $\varepsilon>0$ is independent of the choice of $t_0\in[0,T]$ and $\psi\in\A D^{m-1}_{n,N}$ we conclude that 
\eqref{eq:ode} has a unique solution 
\[
\varphi\in C^1\big([0,T],\A D^{m-1}_{n,N}\big)\,.
\]
As $\partial_x : \A^{m-1}_{n,N}\to\A^{m-2}_{n+1,N+1}$ is a bounded linear map we see from 
Lemma \ref{lem:chain_rule} $(i)$ that
\begin{equation}
(\varphi_x)^{\cdot}=u_x\circ\varphi\cdot\varphi_x
\end{equation}
where the $t$-derivative is understood in $\A^{m-2}_{n+1,N+1}$.
As $\varphi\in C^1\big([0,T],\A D^{m-1}_{n,N}\big)$ we see that 
$\varphi_x\in C^1\big([0,T],\A D^{m-2}_{n+1,N+1}\big)$. By the inclusion $\A^{m-2}_{n+1,N+1}\subseteq L^\infty$
one concludes that for any given $x\in\R$, $\varphi_x(\cdot,x)\in C^1([0,T],\R)$. This together with
$\varphi_x(t,x)>0$ implies that $(\log\varphi_x(t,x))^\cdot=u_x(t,\varphi(t,x))$ for any given $x\in\R$ and $t\in[0,T]$.
Hence, for any $x\in\R$ and for any $t\in[0,T]$,
\begin{eqnarray}
\varphi_x(t,x)&=&e^{\int_0^t\big(u_x(s)\circ\varphi(s)\big)(x)\,ds}\nonumber\\
&=&1+\sum_{k\ge 1}\Big(\int_0^t\big(u_x(s)\circ\varphi(s)\big)(x)\Big)/k!\,.\label{eq:exponent}
\end{eqnarray}
Note that by assumption $u_x\in C^0\big([0,T],\A^{m-1}_{n+1,N+1}\big)$. 
As $\A^{m-1}_{n+1,N+1}\subseteq \frac{1}{\x}\A^{m-1}_{n,N}$ and as $\varphi\in C^1\big([0,T],\A D^{m-1}_{n,N}\big)$ we
conclude from Theorem \ref{th:asymptotic_group}, Lemma \ref{lem:1/x-composition}, and Lemma \ref{lem:A-properties} that 
$u_x\circ\varphi\in C^0\big([0,T],\A^{m-1}_{n+1,N+1}\big)$.
This implies that
\[
\int_0^t u_x(s)\circ\varphi(s)\,ds\in\A^{m-1}_{n+1,N+1}
\]
as the integrand is a continuous curve in $\A^{m-1}_{n+1,N+1}$.
As $\A^{m-1}_{n+1,N+1}$ is a Banach algebra we conclude from \eqref{eq:exponent} that 
\[
\varphi_x(t)-1\in\A^{m-1}_{n+1,N+1}
\]
and
\[
\varphi_x-1\in C^1\big([0,T],\A^{m-1}_{n+1,N+1}\big).
\]
As in addition $\varphi\in C^1([0,T],\A D^{m-1}_{n,N})$
we see that 
\[
\varphi\in C^1\big([0,T],\A D^{m}_{n,N}\big)\,.
\] 
This solution is unique in $\A D^{m}_{n,N}$ as it is unique as a curve in $\A D^{m-1}_{n,N}$.
\hfill $\Box$

\medskip
\noindent{\bf Proof of Proposition \ref{prop:lagrangian_coordinates}.}
Let $u\in C^0([0,T],\A^m_{n,N})\cap C^1([0,T],\A^{m-1}_{n,N})$ be a solution of \eqref{eq:b-family}.
Then we have
\[
(1-\partial_x^2) (u_t+u u_x)=-b u u_x-(3-b) u_x u_{xx}\,.
\]
It follows from Lemma \ref{lem:A-properties} that $u_t+u u_x\in\A^{m-1}_{n,N}$ and
$\beta_b(u)\in\A^{m-2}_{n,N}$.
In view of Lemma \ref{lem:Q} $(iv)$, we get
\begin{equation}\label{eq:ch'}
u_t+u u_x=(1-\partial_x^2)^{-1}(\beta_b(u))
\end{equation}
where $(1-\partial_x^2)^{-1}f:=Q(f)$ and $Q(f)$ is defined by \eqref{eq:Q}.

Next, consider the differential equation
\begin{equation}\label{eq:ode_inside}
\dot\varphi=u\circ\varphi,\,\,\varphi|_{t=0}=\id\,.
\end{equation}
In view of Lemma \ref{lem:ode} there exists a unique solution 
\[
\varphi\in C^1([0,T],\A D^m_{n,N}).
\]
Denote
\[
v:=u\circ\varphi\,.
\]
It follows from Sobolev embedding theorem that $u(t,x)$ and $\varphi(t,x)$ are functions in
$C^1([0,T]\times\R,\R)$.
In particular, by \eqref{eq:ch'},
\begin{eqnarray*}
v_t&=&u_t\circ\varphi+u_x\circ\varphi\cdot\varphi_t\\
&=&(u_t+u u_x)\circ\varphi\\
&=&[(1-\partial_x^2)^{-1}(\beta_b(u))]\circ\varphi\,.
\end{eqnarray*}
Hence, for any given $x\in\R$ and $t\in[0,T]$,
\begin{equation}\label{eq:v_t}
v(t,x)=u_0(x)+\int_0^t\big([(1-\partial_x^2)^{-1}(\beta_b(u(s)))]\circ\varphi(s)\big)(x)\,dt\,.
\end{equation}
As $u\in C^0([0,T],\A^m_{n,N})$ we obtain from Lemma \ref{lem:A-properties} and $n\ge 1$ that 
\[
\beta_b(u)\in C^0([0,T],\A^{m-2}_{n,N+2})\,.
\]
In view of Corollary \ref{coro:L^{-1}} we get
\[
(1-\partial_x^2)^{-1}(\beta_b(u))\in C^0([0,T],\A^m_{n,N})\,.
\]
As $\varphi\in C^1([0,T],\A D^m_{n,N})$ we obtain from Theorem \ref{th:asymptotic_group} that
\[
[(1-\partial_x^2)^{-1}(\beta_b(u))]\circ\varphi\in C^0([0,T],\A^m_{n,N})\,.
\]
This implies that the integrand in \eqref{eq:v_t} converges in $\A^m_{n,N}$. Hence, 
\[
v\in C^1([0,T],\A^m_{n,N})
\]
and 
\[
\dot v=R_\varphi\circ(1-\partial_x^2)^{-1}\circ\beta_b\circ R_{\varphi^{-1}}(v)\,.
\]

\noindent Conversely, assume that 
\[
(\varphi,v)\in C^1\big([0,T],\A D^m_{n,N}\times\A^m_{n,N}\big)
\]
is a solution of \eqref{eq:lagrangian_coordinates}.
Denote 
\[
u:=v\circ\varphi^{-1}\,.
\]
In view of Theorem \ref{th:asymptotic_group},
\[
u\in C^0([0,T],\A^m_{n,N})\cap C^1([0,T],\A^{m-1}_{n,N})\,.
\]
By inspection one checks that $u$ is a solution of \eqref{eq:b-family}.

The fact that the described above correspondence between solutions of \eqref{eq:b-family} and
\eqref{eq:lagrangian_coordinates} is bijective follows from the fact that if
$u_1,u_2\in C^0([0,T],\A^{m}_{n,N})$, $u_1\ne u_2$ then the corresponding solutions
$\varphi_1,\varphi_2\in C^1([0,T],\A D^{m}_{n,N})$ of \eqref{eq:ode_inside} do not coincide.
\hfill $\Box$

%%%%%%%%%%%%%%%%%%%%%%%%%%%%%%%%%
\section{Appendix B: Auxiliary Results}\label{sec:auxiliary}
In this Appendix we collect for the convenience of the reader some auxiliary results needed in the main
body of the paper. For any $\gamma\in\R$ denote
\[
L^2_\gamma:=\{f\in L^2_{loc}\,|\, \x^\gamma f\in L^2 \}\,.
\]
Let
\begin{equation}\label{eq:L^2_*}
L^2_*:=\bigcup\limits_{\gamma\in\R} L^2_\gamma\,.
\end{equation}
For any $g\in L^2_*$ define the integral transforms
\begin{equation}\label{eq:Qpm}
(Q_\pm(g))(x):=\int_0^\infty g(x\mp z)\,e^{-z}\,dz
\end{equation}
and 
\begin{equation}\label{eq:Q}
Q(g):=\frac{1}{2}(Q_+(g)+Q_-(g))\,.
\end{equation}

\begin{Lem}\label{lem:Q}
$(i)$ For any $g\in L^2_*$, $Q_\pm(g)\in H^1_{loc}$ and
\begin{equation}\label{eq:LQpm}
(1\pm\partial_x)Q_\pm(g)=g\,.
\end{equation}
Moreover, for any $\gamma\in\R$,
\[
\x^\gamma f\in L^2\implies \x^\gamma Q_\pm(f)\in L^2\,.
\]
$(ii)$  For any $g\in H^1_{loc}\cap L^2_*$, $Q_\pm(g)\in H^2_{loc}$. If in addition $g'\in L^2_*$ then
\begin{equation}%\label{eq:Qpm'}
Q_\pm(g)'=Q_\pm(g')\;\;\;\mbox{and}\;\;\;(1\pm\partial_x)Q_\pm(g)=Q_{\pm}\big((1\pm\partial_x) g\big)=g\,.
\end{equation}
The restriction of $Q_\pm$ to $H^1$ is a linear isomorphism $Q_\pm|_{H^1} : H^1\to H^2$.

\noindent $(iii)$ For any $g\in H^1_{loc}\cap L^2_*$, $Q(g)\in H^3_{loc}$, and
\begin{equation}
(1-\partial_x^2) Q(g)=g\,.
\end{equation}
If in addition $g'\in L^2_*$ then
\begin{equation}\label{eq:Q'}
Q(g)'=Q(g')\,.
\end{equation}
The restriction of $Q$ to $H^1$ is a linear isomorphism $Q|_{H^1} : H^1\to H^3$.

\noindent $(iv)$ Assume that $f\in H^2_{loc}$ and $f,f',f''\in L^2_*$. Then
\begin{equation}\label{eq:Q-bijective}
Q\big((1-\partial_x^2)f\big)=(1-\partial_x^2)Q(f)=f\,.
\end{equation}
\end{Lem}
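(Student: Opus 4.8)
The plan is to leverage parts $(i)$--$(iii)$, which have already established the key identities for the operators $Q_\pm$ and $Q$ on functions lying in $L^2_*$ together with their low-order derivatives. The hypotheses in $(iv)$ — namely $f\in H^2_{loc}$ with $f,f',f''\in L^2_*$ — are precisely what is needed to apply those earlier parts to $g:=(1-\partial_x^2)f = f - f''$, since $g\in L^2_*$ as a sum of two elements of $L^2_*$, and moreover $g\in H^1_{loc}$ because $f\in H^2_{loc}$. Thus part $(iii)$ applies directly to give $Q\big((1-\partial_x^2)f\big)\in H^3_{loc}$ and $(1-\partial_x^2)Q\big((1-\partial_x^2)f\big)=(1-\partial_x^2)f$. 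The real content of $(iv)$ is the stronger claim that $Q$ actually inverts $1-\partial_x^2$ on the nose, i.e. $Q\big((1-\partial_x^2)f\big)=f$ and symmetrically $(1-\partial_x^2)Q(f)=f$.

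First I would establish $(1-\partial_x^2)Q(f)=f$. Since $f\in H^1_{loc}\cap L^2_*$ (indeed $f\in H^2_{loc}$), part $(iii)$ gives immediately $(1-\partial_x^2)Q(f)=f$; that half is essentially free. Next I would establish $Q\big((1-\partial_x^2)f\big)=f$. Write $h:=Q\big((1-\partial_x^2)f\big)$. By the computation above, $(1-\partial_x^2)(h-f)=0$, so $w:=h-f$ solves the ODE $w-w''=0$, hence $w(x)=\alpha e^x+\beta e^{-x}$ for constants $\alpha,\beta$. The point is to show $\alpha=\beta=0$ using the decay/integrability built into the hypotheses. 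By part $(iii)$, $h=Q\big((1-\partial_x^2)f\big)\in H^3_{loc}$, and by the "moreover" clause of part $(i)$ applied to $Q_\pm$, each of $Q_\pm\big((1-\partial_x^2)f\big)$ inherits the weighted-$L^2$ bound of $(1-\partial_x^2)f$; hence $h\in L^2_\gamma$ for some $\gamma\in\R$, in particular $h\in L^2$. Since also $f\in L^2_*\subseteq L^2$ (up to the weight, but in any case $f\in L^2_{loc}$ with polynomial-weighted $L^2$ control, so $f\in L^2$), we get $w=h-f\in L^2$. But $\alpha e^x+\beta e^{-x}\in L^2(\R)$ forces $\alpha=\beta=0$, so $w\equiv 0$ and $h=f$.

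The step I expect to be the main obstacle is the careful bookkeeping of the weighted spaces in the middle of the argument: one must check that $(1-\partial_x^2)f$ genuinely lands in some $L^2_\gamma$ (not merely $L^2_{loc}$) so that the "moreover" part of $(i)$ can be invoked to conclude $h\in L^2$, and one must make sure the exponential-growth solutions of $w-w''=0$ are excluded by an honest integrability statement rather than a mere pointwise decay heuristic. Once $h,f\in L^2$ is secured, ruling out $\alpha e^x+\beta e^{-x}$ is immediate. A cosmetic alternative to the ODE argument — should one prefer to avoid invoking uniqueness for $w-w''=0$ — is to commute derivatives past $Q_\pm$ using the identity $Q_\pm(g)'=Q_\pm(g')$ from part $(ii)$ (legitimate here since $f',f''\in L^2_*$), reducing $Q\big((1-\partial_x^2)f\big)$ to $Q_\pm$ applied to $(1\pm\partial_x)f$ and then telescoping via \eqref{eq:LQpm}; this yields $Q\big((1-\partial_x^2)f\big)=f$ by direct manipulation. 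Either route closes the lemma.
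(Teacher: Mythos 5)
Your proposal addresses only part $(iv)$ of the lemma, taking $(i)$--$(iii)$ as given. But the statement to be proved is the whole four-part lemma, and virtually all of the analytic content lives in the parts you skip: the weighted estimate $\x^\gamma f\in L^2\Rightarrow\x^\gamma Q_\pm(f)\in L^2$ (which in the paper rests on the inequality $\x^2\le 2\langle x\pm z\rangle^2\langle z\rangle^2$ and Young's convolution inequality), the commutation $Q_\pm(g)'=Q_\pm(g')$ (a Fubini/integration-by-parts argument for weak derivatives), and the isomorphism statements $Q_\pm|_{H^1}:H^1\to H^2$, $Q|_{H^1}:H^1\to H^3$ (bijectivity plus the open mapping theorem). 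None of this is "already established'' in the paper prior to this lemma; it is precisely what the lemma's proof must supply. As a proof of the stated lemma, the proposal is therefore incomplete in an essential way. (For the record, the paper disposes of $(iv)$ in one line --- ``follows from $(iii)$'' --- by applying $Q(g)'=Q(g')$ twice to get $Q(f-f'')=Q(f)-Q(f)''=(1-\partial_x^2)Q(f)=f$; your closing ``telescoping'' alternative via $Q_\pm\big((1\pm\partial_x)(f\mp f')\big)=f\mp f'$ is a correct variant of this and is the soundest part of your write-up.)

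Within your main argument for $(iv)$ there are also two concrete errors. First, you apply part $(iii)$ to $g:=(1-\partial_x^2)f$ claiming $g\in H^1_{loc}$ ``because $f\in H^2_{loc}$''; in fact $g=f-f''$ lies only in $L^2_{loc}$ unless $f\in H^3_{loc}$, so $(iii)$ does not apply to $g$, and your intermediate claim $Q\big((1-\partial_x^2)f\big)\in H^3_{loc}$ is false in general (it would contradict the conclusion $Q\big((1-\partial_x^2)f\big)=f\in H^2_{loc}$). The identity $(1-\partial_x^2)Q(g)=g$ does hold for all $g\in L^2_*$ and can be extracted from $(i)$ alone via $Q(g)'=\tfrac12(Q_-(g)-Q_+(g))$, but that is not the route you wrote. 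Second, the inclusion $L^2_*\subseteq L^2$ is false: $L^2_*=\bigcup_{\gamma\in\R}L^2_\gamma$ allows negative $\gamma$, so for instance constants belong to $L^2_{-1}\subseteq L^2_*$ but not to $L^2$. Hence neither $h\in L^2$ nor $f\in L^2$ follows, and your exclusion of $\alpha e^x+\beta e^{-x}$ by square-integrability does not close as written. It is repairable --- $w=h-f$ lies in some $L^2_\gamma$ with $\gamma$ possibly negative, and a nonzero $\alpha e^x+\beta e^{-x}$ fails to lie in $L^2_\gamma$ for every $\gamma$ since exponential growth dominates any polynomial weight --- but that correction must actually be made, and in any case the ODE detour is unnecessary given the direct computation above.
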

\begin{proof}
$(i)$  Take $g\in L^2_*$. As $g\in L^2_\gamma$ for some $\gamma\in\R$ one sees that for any $a\in\R$ the
function 
$y\mapsto g(\mp y)\,e^{-y}=
\big(g(\mp y)\langle y\rangle^\gamma\big)\cdot\big(\,e^{-y}\langle y\rangle^{-\gamma}\big)$ is summable on
$[a,\infty)$  and
\begin{equation}\label{eq:Q-long}
(Q_\pm(g))(x)=\int_0^\infty g(x\mp z)\,e^{-z}\,dz=e^{\mp x}\int_{\mp x}^\infty g(\mp y)\,e^{-y}\,dy\,.
\end{equation}
This formula implies that $Q_\pm(g)$ is an {\em absolutely continuous} function on any finite interval
of $\R$ and its derivative\footnote{Recall that the (pointwise) derivative of an absolutely continuous
function exists a.e. and coincides with its weak derivative.} belongs to $L^2_{loc}$.
Differentiating \eqref{eq:Q-long} we see that,
\begin{equation}\label{eq:LQ2}
Q_\pm(g)'=\pm g \mp Q_{\pm}(g)\,,
\end{equation}
which implies \eqref{eq:LQpm}.
As $Q_\pm(g)\in L^2_{loc}$ we get from \eqref{eq:LQ2} that $Q_\pm(g)\in H^1_{loc}$.

Now, consider the second statement in $(i)$.
Take $\gamma\ge 0$ and note that for any $x,z\in\R$,
\begin{equation}\label{eq:inequality_weights}
\x^2\le 2\langle x\pm z\rangle^2\langle z\rangle^2\,.
\end{equation}
It follows from \eqref{eq:inequality_weights} and Young's inequality that for any 
measurable function $f$ such that $\x^\gamma f\in L^2$, $\x^\gamma Q_+(f)\in L^2$ and
\begin{eqnarray*}
\|\x^\gamma Q_+(f)\|_{L^2}&=&\Big\|\int_0^\infty  \x^\gamma f(x - z)\,e^{-z}\,dz\Big\|_{L^2}\\
&\le& 2^{\gamma/2}\Big\|\int_0^\infty \langle x - z\rangle^\gamma |f(x - z)|\,
\langle z\rangle^\gamma\,e^{-z}\,dz\Big\|_{L^2}\\
&=&2^{\gamma/2}\big\|\big(\langle\cdot\rangle^\gamma |f(\cdot)|\big)*
\big(\langle\cdot\rangle^\gamma\chi_{[0,\infty)}(\cdot)\,e^{-(\cdot)}\big)\big\|_{L^2}\le
C \|\x^\gamma f\|_{L^2},
\end{eqnarray*}
where $\chi_{[0,\infty)}$ is the characteristic function of $[0,\infty)$ and 
$C>0$ is independent of the choice of $f$. The case of the transformation $Q_-$
is considered in the same way. If $\gamma<0$ one uses the inequality 
$\langle x\pm z\rangle^2\le 2\x^2\langle z\rangle^2$ and argues similarly.

$(ii)$ As $g\in H^1_{loc}\cap L^2_*$ we see from $(i)$ that $Q_\pm(g)\in H^1_{loc}$ and
$Q_\pm(g)'=g-Q_{\pm}(g)$. This implies that $Q_\pm(g)\in H^2_{loc}$.

Assume in addition that $g'\in L^2_*$. Then, for any test function $\psi\in C^\infty_0$ such that
$\supp\psi\subseteq (-R,R)$, $0<R<\infty$,  we get from Fubini's theorem and an integration by parts that,
\begin{eqnarray}
\langle Q_\pm(g)',\psi\rangle&:=&-\int_{-\infty}^\infty\psi'(x)
\int_0^\infty g(x\mp z)\,e^{-z}\,dz\,dx\nonumber\\
&=&-\int_0^\infty e^{-z}\int_{-R}^R g(x\mp z)\psi'(x)\,dx\,dz\nonumber\\
&=&\int_0^\infty e^{-z}\int_{-R}^R g'(x\mp z)\psi(x)\,dx\,dz\nonumber\\
&=&\int_{-\infty}^\infty\Big(\int_0^\infty g'(x\mp z)\,e^{-z}\,dz\Big)\psi(x)\,dx\nonumber\\
&=&\langle Q_\pm(g'),\psi\rangle\label{eq:weak_nerivative}\,.
\end{eqnarray}
We can apply Fubini's theorem as by the first statement of $(i)$, 
$\int_0^\infty |g(x\mp z)|\,e^{-z}\,dz$ and $\int_0^\infty |g'(x\mp z)|\,e^{-z}\,dz$
are locally summable functions in the variable $x$.
Formula \eqref{eq:weak_nerivative} then implies that, 
\begin{equation}\label{eq:Qpm'2}
Q_\pm(g)'=Q_\pm(g')\,.
\end{equation}
This together with \eqref{eq:LQ2} implies that
\begin{equation}
Q_{\pm}\big((1\pm\partial_x) g\big)=(1\pm\partial_x)Q_\pm(g)=g\,.\label{eq:LQ3}
\end{equation}
Now, we prove the last statement in $(ii)$.
Assume that $g\in H^1$. Then $g, g'\in L^2\subseteq L^2_*$, and in
view of \eqref{eq:Qpm'2} and the second statement in $(i)$ 
(with $\alpha=0$) we see that $Q_\pm(g)'=Q_\pm(g')\in H^1$. This shows that $Q_\pm(g)\in H^2$.
Hence, the linear map $Q_\pm|_{H^1} : H^1\to H^2$ is well defined.
It follows from \eqref{eq:LQ3} that $Q_\pm|_{H^1} : H^1\to H^2$ is a bijective map with a continuous
inverse $(1\pm\partial_x) : H^2\to H^1$. 
This together with the open mapping theorem imply that $Q_\pm\big|_{H^1} : H^1\to H^2$ is
a linear isomorphism.

$(iii)$ This item follows immediately from $(i)$ and $(ii)$. In fact, for any $g\in H^1_{loc}\cap L^2_*$
one gets from \eqref{eq:Q} and $(ii)$ that $Q(g)\in H^2_{loc}$. Moreover,
\begin{eqnarray}
(1-\partial_x^2) Q(g)&=&\frac{1}{2}\Big((1-\partial_x)(1+\partial_x) Q_+(g)+
(1+\partial_x)(1-\partial_x) Q_-(g)\Big)\nonumber\\
&=&\frac{1}{2}\Big((1-\partial_x)g+(1+\partial_x)g\Big)=g\,.\label{eq:LQ_proof}
\end{eqnarray}
This implies that $Q(g)''=g-Q(g)\in H^1_{loc}$, and therefore $Q(g)\in H^3_{loc}$.

\noindent Assume that $g\in H^1$. Then it follows from \eqref{eq:Q} and the last statement in $(ii)$ that
$Q(g)\in H^2$. In view of \eqref{eq:LQ_proof}, $Q(g)''=g-Q(g)\in H^1$, and hence, $Q(g)\in H^3$. Hence, the
linear map $Q|_{H^1} : H^1\to H^3$ is well defined. Now, take $f\in H^3$. Then, by applying 
\eqref{eq:Qpm'2} twice we get
\[
Q\big((1-\partial_x^2)f\big)=(1-\partial_x^2)Q(f)=f\,.
\]
This implies that $Q|_{H^1} : H^1\to H^3$ is a bijective map with a continuous inverse
$(1-\partial_x^2) : H^3\to H^1$. Hence, by the open mapping theorem $Q|_{H^1} : H^1\to H^3$ 
a linear isomorphism.

\noindent The proof of $(iv)$ follows from $(iii)$.
\end{proof}

\begin{Lem}\label{lem:chain_rule}
$(i)$ Assume that $f\in H^1_{loc}$ and $\varphi : \R\to\R$ be an orientation preserving
$C^1$-diffeomorphism of $\R$. 
Then, $R_\varphi(f)=f\circ\varphi\in H^1_{loc}$ and
\[
(f\circ\varphi)'=f'\circ\varphi\cdot\varphi'\,.
\]
If $f\in H^1$ and $0<\inf\limits_{x\in\R}\varphi'(x)<\infty$ then $f\circ\varphi\in H^1$.

\noindent $(ii)$ Assume that $\psi:=1+f>0$
where $f\in H^1_{loc}$.\footnote{$\psi>0$ means that for any $x\in\R$, $\psi(x)>0$.} 
Then $1/\psi\in H^1_{loc}$ and
\[
\Big(\frac{1}{\psi}\Big)'=-\frac{1}{\psi^2}\cdot\psi'\,.
\]
If $f\in H^1$ and $0<\inf\limits_{x\in\R}\psi(x)$ then $(1/\psi)'\in L^2$.
\end{Lem}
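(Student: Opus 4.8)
\medskip
\noindent\textbf{Proof strategy (proposed).}
The first assertion in each of $(i)$ and $(ii)$ is local, so the plan is to prove it on an arbitrary bounded open interval and then globalize. For $(i)$: fix a bounded open interval $J$; since $\varphi$ is continuous, $I:=\varphi(J)$ has compact closure, so $f|_I\in H^1(I)$ and I would choose mollifications $f_k\in C^\infty(\R)$ with $f_k\to f$ in $H^1(I')$ for some bounded open $I'$ with $\overline I\subset I'$. For the smooth $f_k$ the classical chain rule gives $(f_k\circ\varphi)'=f_k'\circ\varphi\cdot\varphi'$ on $J$. Next I would pass to the limit using the substitution $y=\varphi(x)$, which is legitimate because $\varphi$ is an orientation-preserving $C^1$-diffeomorphism and which, on the compact set $\overline J$, only involves the bounds $0<\min_{\overline J}\varphi'\le\max_{\overline J}\varphi'<\infty$: concretely
\[
\int_J|f_k\circ\varphi-f\circ\varphi|^2\,dx=\int_I|f_k-f|^2\,(\varphi^{-1})'\,dy\le C_J\,\|f_k-f\|_{L^2(I)}^2
\]
and
\[
\int_J\big|(f_k'-f')\circ\varphi\cdot\varphi'\big|^2\,dx=\int_I|f_k'-f'|^2\,(\varphi'\circ\varphi^{-1})\,dy\le C_J'\,\|f_k'-f'\|_{L^2(I)}^2 ,
\]
both tending to $0$. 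Taking limits identifies $f\circ\varphi\in H^1(J)$ with weak derivative $f'\circ\varphi\cdot\varphi'$, and since $J$ is arbitrary this gives $f\circ\varphi\in H^1_{loc}$ together with the chain rule formula. For the last claim in $(i)$ I would rerun the two change-of-variables estimates over all of $\R$: with $c_1:=\inf_\R\varphi'>0$ and $c_2:=\sup_\R\varphi'<\infty$ (the latter being available in all our applications, where $\varphi=\id+u$ with $u'$ vanishing at infinity) one gets $\|f\circ\varphi\|_{L^2(\R)}^2\le c_1^{-1}\|f\|_{L^2}^2$ and $\|(f\circ\varphi)'\|_{L^2(\R)}^2=\int_\R|f'|^2\,(\varphi'\circ\varphi^{-1})\,dy\le c_2\,\|f'\|_{L^2}^2$, hence $f\circ\varphi\in H^1$.

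For $(ii)$ I would proceed similarly but start from the one-dimensional Sobolev embedding: on a bounded open interval $J$ one has $f\in C(\overline J)$, so $\psi=1+f$ is continuous and, being everywhere positive, satisfies $\psi\ge\delta_J>0$ on $\overline J$. Choosing $f_k\in C^\infty$ with $f_k\to f$ in $H^1(J')$ for some bounded open $J'\supset\overline J$, the Sobolev embedding also forces $f_k\to f$ uniformly on $\overline J$, so $\psi_k:=1+f_k\ge\delta_J/2$ on $\overline J$ for large $k$ and $(1/\psi_k)'=-f_k'/\psi_k^2$. Then $1/\psi_k\to1/\psi$ uniformly on $\overline J$, while
\[
\frac{f_k'}{\psi_k^2}-\frac{f'}{\psi^2}=\frac{f_k'-f'}{\psi_k^2}+f'\Big(\frac1{\psi_k^2}-\frac1{\psi^2}\Big)\longrightarrow 0\ \text{ in }L^2(J),
\]
the first term since $\psi_k^{-2}$ is bounded on $\overline J$ and $f_k'\to f'$ in $L^2(J)$, the second since $\psi_k^{-2}\to\psi^{-2}$ uniformly on $\overline J$ and $f'\in L^2(J)$. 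Passing to the limit yields $1/\psi\in H^1(J)$ with $(1/\psi)'=-\psi'/\psi^2=-f'/\psi^2$, and letting $J$ vary gives $1/\psi\in H^1_{loc}$ and the stated identity. If in addition $f\in H^1(\R)$ and $c:=\inf_\R\psi>0$, then from the pointwise formula $|(1/\psi)'|=|f'|/\psi^2\le c^{-2}|f'|$ a.e., so $(1/\psi)'\in L^2(\R)$; note one should not expect $1/\psi\in L^2$, consistent with the statement asserting only that the derivative is square integrable.

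The step I expect to be the main obstacle is not any single estimate but making the chain/quotient rule rigorous at the level of weak derivatives, i.e.\ checking that $f'\circ\varphi$ (resp.\ $f'/\psi^2$) is a bona fide element of $L^2_{loc}$ and really is the weak derivative of $f\circ\varphi$ (resp.\ of $1/\psi$). The approximation scheme above is designed precisely to bypass a direct integration by parts; the two facts that make it run are (a) the change-of-variables formula for the $C^1$-diffeomorphism $\varphi$ --- in particular $\varphi$ maps Lebesgue-null sets to null sets, so $f'\circ\varphi$ is unambiguously defined a.e.\ --- and (b) the embedding $H^1_{loc}\hookrightarrow C$ in one dimension, which both keeps $\psi$ away from $0$ on compacta and delivers the uniform convergence needed to control $\psi_k^{-2}$. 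Everything else is routine dominated-convergence bookkeeping.
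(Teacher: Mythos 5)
Your argument is correct and is essentially the paper's: both prove the identity by approximating $f$ with smooth functions, applying the classical chain (resp.\ quotient) rule to the approximants, and passing to the limit via the change of variables $y=\varphi(x)$; the only cosmetic difference is that you pass to the limit in $L^2_{loc}$ and invoke closedness of the weak derivative, while the paper passes to the limit inside the distributional pairing $\langle(f\circ\varphi)',\psi\rangle$ against a test function. Your side remark that the final claim of $(i)$ really uses $\sup_\R\varphi'<\infty$ (which the stated hypothesis $0<\inf_\R\varphi'<\infty$ does not literally provide, but which holds in all applications since $\varphi=\id+u$ with $u'$ decaying) is a fair and accurate observation that the paper glosses over.
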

\begin{proof}
As the both items of the Lemma follow in a similar way from a simple approximation argument, we prove only $(i)$.
Assume that $f\in H^1_{loc}$ and $\varphi : \R\to\R$ is an orientation preserving  $C^1$-diffeomorphism of $\R$.
Let $\psi\in C^\infty_0$ be a test function such that $\supp\psi\subseteq (-R,R)$, $0<R<\infty$.
As $f\in H^1_{loc}$ there exists $(f_k)_{k\ge 1}$, $f_k\in C^\infty\big([\varphi(-R),\varphi(R)]\big)$ such that
$f_k\to f$ $(k\to\infty)$ in $H^1\big((\varphi(-R),\varphi(R))\big)$. Then,
\begin{eqnarray}
\langle (f\circ\varphi)',\psi\rangle&:=&-\int_{-\infty}^\infty f(\varphi(x))\cdot \psi'(x)\,dx=
-\int_{\varphi(-R)}^{\varphi(R)} f(y)\cdot\frac{\psi'(\varphi^{-1}(y))}{\varphi'(\varphi^{-1}(y))}\,dy\nonumber\\
&=&-\lim_{k\to\infty}
\int_{\varphi(-R)}^{\varphi(R)} f_k(y)\cdot\frac{\psi'(\varphi^{-1}(y))}{\varphi'(\varphi^{-1}(y))}\,dy\label{eq:weak1}
\end{eqnarray}
and
\begin{eqnarray}
\lim_{k\to\infty}\int_{-R}^R(f_k(\varphi(x)))'\cdot \psi(x)\,dx
&=&\lim_{k\to\infty}\int_{-R}^R f_k'(\varphi(x))\cdot\varphi'(x)\cdot \psi(x)\,dx\nonumber\\
&=&\!\!\!\lim_{k\to\infty}\int_{\varphi(-R)}^{\varphi(R)}\!\!\!\!\!\!\!
f_k'(y)\cdot\varphi'(\varphi^{-1}(y))\cdot\frac{\psi(\varphi^{-1}(y))}{\varphi'(\varphi^{-1}(y))}\,dy\nonumber\\
&=&\int_{\varphi(-R)}^{\varphi(R)}
f'(y)\cdot\varphi'(\varphi^{-1}(y))\cdot\frac{\psi(\varphi^{-1}(y))}{\varphi'(\varphi^{-1}(y))}\,dy\nonumber\\
&=&\int_{-\infty}^{\infty} f'(\varphi(x))\cdot\varphi'(x)\cdot\psi(x)\,dx\nonumber\\
&=&\langle f'\circ\varphi\cdot\varphi',\psi\rangle\,.\label{eq:weak2}
\end{eqnarray}
Combining \eqref{eq:weak1} with \eqref{eq:weak2} we see that
\begin{equation}\label{eq:chain_rule}
(f\circ\varphi)'= f'\circ\varphi\cdot\varphi'\,.
\end{equation}
Using a change of variables one easily sees that $f\circ\varphi$ and $f'\circ\varphi$ are
locally square integrable.
This and \eqref{eq:chain_rule} then imply that $f\circ\varphi\in H^1_{loc}$.
The proof of the last statement of $(i)$ follows from \eqref{eq:chain_rule} in a similar way.
\end{proof}

\begin{Lem}\label{lem:decay}
Let $m\ge 1$ and $N\in\R$. Then there exists $C>0$ such that for any $f\in W^m_N$ and
for any $0\le j\le m-1$,
\[
\sup_{x\in\R}|f^{(j)}(x)|\x^{N+j+\frac{1}{2}}\le C\|f\|_{W_N^m}
\]
and
\[
\lim_{|x|\to\infty}|f^{(j)}(x)|\x^{N+j+\frac{1}{2}}=0\,.
\]
If $N\ge 0$ one has the continuous inclusions $W_N^m\subseteq H^m\subseteq C^{m-1}$.
\end{Lem}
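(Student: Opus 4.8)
The plan is to reduce all three assertions to the one–dimensional Sobolev embedding $H^1(\R)\hookrightarrow C_0(\R)$, applied not to $f^{(j)}$ directly but to a weighted version of it, and to extract the extra half power of decay $\x^{1/2}$ by a direct computation with the fundamental theorem of calculus rather than by a crude $H^1$–estimate. Fix $0\le j\le m-1$ and set $g:=\x^{N+j+1/2}f^{(j)}$. Since $f\in W^m_N\subseteq H^m_{loc}$ and $m-j\ge 1$, we have $f^{(j)}\in H^{m-j}_{loc}\subseteq H^1_{loc}\subseteq C^0$, so $g$ is continuous and locally absolutely continuous, and $g^2=\x^{2N+2j+1}(f^{(j)})^2$ satisfies
\[
(g^2)'=(2N+2j+1)\,x\,\x^{2N+2j-1}(f^{(j)})^2+2\,\x^{2N+2j+1}f^{(j)}f^{(j+1)}
\]
a.e.\ on $\R$.

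First I would check that $(g^2)'\in L^1(\R)$ with $\|(g^2)'\|_{L^1}\le C\|f\|_{W^m_N}^2$ (uniformly in $j$): using $|x|\x^{-1}\le 1$ the first term is bounded by $C\,|\x^{N+j}f^{(j)}|^2$, and, since $\x^{2N+2j+1}=\x^{N+j}\cdot\x^{N+j+1}$, the second term is bounded by $|\x^{N+j}f^{(j)}|^2+|\x^{N+j+1}f^{(j+1)}|^2$ (AM--GM); both are integrable with the stated norm control because $0\le j$ and $j+1\le m$. Integrating $(g^2)'$ shows that the finite limits $\ell_\pm:=\lim_{x\to\pm\infty}g^2(x)$ exist. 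Here is the one point that needs care: one cannot conclude $\ell_\pm=0$ from $g\in L^2$, since $g$ need not be square integrable (that is exactly the extra $\x^{1/2}$). Instead, if say $\ell_+>0$, then $(f^{(j)}(x))^2\ge\tfrac{\ell_+}{2}\x^{-(2N+2j+1)}$ for large $x$, so $\x^{2N+2j}(f^{(j)})^2\ge c\,\x^{-1}$ fails to be integrable near $+\infty$, contradicting $\x^{N+j}f^{(j)}\in L^2$; hence $\ell_+=0$, and $\ell_-=0$ by the same reasoning. Consequently $g^2(x)=-\int_x^\infty(g^2)'\,dy$ (and $=\int_{-\infty}^x(g^2)'\,dy$), which gives at once $\sup_{x\in\R}g(x)^2\le\|(g^2)'\|_{L^1}\le C\|f\|_{W^m_N}^2$ and $\lim_{|x|\to\infty}g(x)^2=0$; taking square roots yields both displayed statements.

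Finally, when $N\ge 0$ the weights obey $\x^{N+j}\ge 1$, so $\|f^{(j)}\|_{L^2}\le\|\x^{N+j}f^{(j)}\|_{L^2}$ for $0\le j\le m$ and hence $\|f\|_{H^m}\le\|f\|_{W^m_N}$, i.e.\ $W^m_N\subseteq H^m$ continuously; the inclusion $H^m(\R)\subseteq C^{m-1}(\R)$ is the standard one–dimensional Sobolev embedding (valid since $m\ge 1$). I expect the main obstacle to be precisely the vanishing of the boundary limits $\ell_\pm$ described above — everything else is a routine weighted estimate — so the proof should be organized to make that step explicit.
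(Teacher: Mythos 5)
Your proof is correct and follows essentially the same route as the paper: both consider the auxiliary function $\x^{2N+2j+1}(f^{(j)})^2$, show its derivative is in $L^1$ with norm controlled by $\|f\|_{W^m_N}^2$, rule out nonzero boundary limits by contradiction with the square integrability of $\x^{N+j}f^{(j)}$, and then read off the sup bound and the vanishing at infinity from the fundamental theorem of calculus. The only cosmetic difference is that the paper first treats the case $m=1$ and then applies it to $f^{(j)}\in W^1_{N+j}$, whereas you run the identical computation directly for each $j$.
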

\begin{proof}
First assume that $f\in W^1_N$ and $N\in\R$. Consider the function
\[
g(x):=f(x)^2\x^{2N+1} \,.
\]
A direct differentiation shows that
\begin{equation}\label{eq:g'-estimate}
|g'|\le (2|N|+1)\big(\x^N|f|\big)^2+2\big(\x^N|f|\big)\big(\x^{N+1}|f'|\big)\in L^1\,.
\end{equation}
As $g'\in L^1$, $g(x)=c_-+\int_{-\infty}^x g'(y)\,dy=c_+-\int_x^\infty g'(y)\,dy$ where
$c_\pm\ge 0$ are non-negative constants. In particular,
\[
\lim_{x\to\pm\infty}g(x)=c_\pm\ge 0\,.
\] 
Now, assume that $c_+>0$. Then, there exist $\alpha\ge 0$ and $\varepsilon>0$ such that
$g(x)\ge\varepsilon^2>0$ for any $x\ge\alpha$. In particular, for any $x\ge\alpha$,
\[
|f(x)|\x^N>\frac{\varepsilon}{\x^{1/2}},
\]
that contradicts $\x^N f\in L^2$. Hence, $c_+=0$. Arguing in a similar way we see that $c_-=0$.
In particular,
\begin{equation}\label{eq:g_representation}
g(x)=\int_{-\infty}^x g'(y)\,dy=-\int_x^\infty g'(y)\,dy
\end{equation}
and
\[
\lim_{|x|\to\infty} g(x)=0\,.
\]
This shows that 
\begin{equation}\label{eq:weight_small_o}
\lim_{|x|\to\infty} |f(x)|\x^{N+\frac{1}{2}}=0\,.
\end{equation}
It follows from \eqref{eq:g'-estimate} and \eqref{eq:g_representation} that
\[
|g(x)|\le\int_{-\infty}^\infty|g'(y)|\,dy\le\big(2|N|+3\big)\|f\|_{W^1_N}\,.
\]
Hence, there exists $C_N>0$ such that for any $f\in W^1_N$ and for any $x\in\R$,
\begin{equation}\label{eq:weight1}
\x^{N+\frac{1}{2}}|f(x)|\le C_N\|f\|_{W^1_N}\,.
\end{equation}
Now, assume that $f\in W^m_N$, $m\ge 1$, and $N\in\R$.
Take $0\le j\le m-1$. Then 
\[
f^{(j)}\in W^{m-j}_{N+j}\subseteq W^1_{N+j}.
\]
In view of \eqref{eq:weight1} we have that for any $x\in\R$,
\[
\x^{N+j+\frac{1}{2}}|f^{(j)}|\le C_{N+j}\|f^{(j)}\|_{W^1_{N+j}}\le C_{N+j}\|f\|_{W^m_N}\,.
\]
This prove the lemma with $C:=\max\limits_{0\le j\le m-1} C_{N+j}$.
In view of \eqref{eq:weight_small_o} we also get that 
$\lim\limits_{|x|\to\infty} |f^{(j)}(x)|\x^{N+j+\frac{1}{2}}=0$.
The last statement of the lemma follows from Sobolev's embedding theorem.
\end{proof}
Arguing in a similar way one proves.
\begin{Lem}\label{lem:decay'}
Let $m\ge 1$ and $N\in\R$. Then there exists $C>0$ such that for any $f\in H^m_N$ and for any 
$0\le j\le m-1$,
\[
\sup_{x\in\R}|f^{(j)}(x)|\x^N\le C\|f\|_{H_N^m}
\]
and
\[
\lim_{|x|\to\infty}|f^{(j)}(x)|\x^N=0\,.
\]
If $N\ge 0$ one has the continuous inclusions $H_N^m\subseteq H^m\subseteq C^{m-1}$.
\end{Lem}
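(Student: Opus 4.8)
The plan is to imitate the proof of Lemma~\ref{lem:decay} almost line by line. The one structural change is that in $H^m_N$ the weight exponent is $N$ for \emph{every} derivative (rather than $N+j$ for the $j$-th one), so the auxiliary function to track is $g(x):=f(x)^2\x^{2N}$ in place of $f(x)^2\x^{2N+1}$; correspondingly the gain over $L^2$ is the full weight $\x^N$ with no extra half-power.

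First I would dispose of the case $m=1$. Let $f\in H^1_N$, so $\x^N f,\x^N f'\in L^2$; since $f\in H^1_{loc}\subseteq C^0$, the function $g:=f^2\x^{2N}$ is absolutely continuous on bounded intervals, and a direct differentiation together with $\x'=x/\x$, $|x|\le\x$, and $\x\ge1$ gives
\[
|g'|\le 2\big(\x^N|f|\big)\big(\x^N|f'|\big)+2|N|\,\x^{-1}\big(\x^N|f|\big)^2\le(2|N|+1)\big(\x^N|f|\big)^2+\big(\x^N|f'|\big)^2\in L^1,
\]
with $\|g'\|_{L^1}\le(2|N|+1)\|f\|_{H^1_N}^2$. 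From $g'\in L^1$ one gets $g(x)=c_-+\int_{-\infty}^x g'\,dy=c_+-\int_x^\infty g'\,dy$ for nonnegative constants $c_\pm$, and, exactly as in the proof of Lemma~\ref{lem:decay}, assuming $c_+>0$ would force $|f(x)|\x^N$ to stay above a positive constant on a half-line, contradicting $\x^N f\in L^2$; hence $c_+=0$ and likewise $c_-=0$. Therefore $g\to0$ at $\pm\infty$ and $|g|\le\|g'\|_{L^1}$, which is precisely $\lim_{|x|\to\infty}|f(x)|\x^N=0$ and $\sup_{x\in\R}|f(x)|\x^N\le\sqrt{2|N|+1}\,\|f\|_{H^1_N}$.

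Next I would bootstrap to arbitrary $m\ge1$: for $f\in H^m_N$ and $0\le j\le m-1$ one has $f^{(j)}\in H^{m-j}_N\subseteq H^1_N$ with $\|f^{(j)}\|_{H^1_N}\le\|f\|_{H^m_N}$, so applying the $m=1$ bound to $f^{(j)}$ yields both asserted estimates with $C:=\sqrt{2|N|+1}$; note that, unlike in Lemma~\ref{lem:decay}, the constant is independent of $j$ since the relevant weight is $\x^N$ for all $j$. Finally, when $N\ge0$ we have $\x^N\ge1$, so $\|f^{(j)}\|_{L^2}\le\|\x^N f^{(j)}\|_{L^2}$ for every $j$, whence $H^m_N\subseteq H^m$ continuously, and $H^m\subseteq C^{m-1}$ is the one-dimensional Sobolev embedding theorem.

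There is no real obstacle; the argument is elementary. The two points needing a little attention are the same as in Lemma~\ref{lem:decay}: ruling out nonzero limits of $g$ at $\pm\infty$ via square-integrability, and absorbing the term $\x^{-1}\big(\x^N|f|\big)^2$ coming from the derivative of the weight into $\big(\x^N|f|\big)^2$ uniformly in $N$ (this uses $\x\ge1$, and is also why the weight must be $\x^{2N}$ rather than $\x^{2N+1}$: in $H^1_N$ one controls $\x^N f'$ but not $\x^{N+1}f'$).
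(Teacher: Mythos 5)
Your proof is correct and follows exactly the route the paper takes: the paper's own proof of this lemma consists of the single remark that one should repeat the argument of Lemma \ref{lem:decay} with the auxiliary function $g(x):=f(x)^2\x^{2N}$, which is precisely what you carried out in detail. Your side observations --- that the constant is uniform in $j$ because the weight does not shift with the order of differentiation, and that one gains the full weight $\x^N$ but no extra half-power of decay --- are accurate consequences of the definition of $H^m_N$.
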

\proof
The proof of this statement follows the arguments of the proof of Lemma \ref{lem:decay} applied to
the function $g(x):=f(x)^2\x^{2N}$.
\endproof

The following Lemma can be proved in a straightforward way.
\begin{Lem}\label{lem:W-properties}
Assume that $m_1, m_2\ge 1$, $N_1, N_2\ge 0$, and $k\ge 0$.
Then the mappings,
\[
\begin{array}{l}
(f,g)\mapsto f\cdot g,\;\;\;W_{N_1}^{m_1}\times W_{N_2}^{m_2}\to W_{N_1+N_2}^{\min(m_1,m_2)},\\
f\mapsto f\cdot\frac{1}{\x^k},\;\;\;W_{N_1}^{m_1}\to W_{N_1+k}^{m_1},\\
f\mapsto f\cdot\frac{x}{\x^{k+1}},\;\;\;W_{N_1}^{m_1}\to W_{N_1+k}^{m_1},
\end{array}
\]
and for $m_1\ge 2$,
\begin{equation*}
f\mapsto f',\;\;\;W_{N_1}^{m_1}\to W_{N_1+1}^{m_1-1},
\end{equation*}
are continuous. 
In particular, for any $m\ge 1$, $N\ge 0$, $W_N^m$ is a ring with continuous (pointwise) product.
\end{Lem}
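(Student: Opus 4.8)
Since all four maps in the statement are either linear or bilinear, the plan is to reduce everything to norm estimates and to obtain these by combining the Leibniz rule with the pointwise decay bound of Lemma~\ref{lem:decay}, which is the only nontrivial ingredient.

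First I would treat the pointwise product. Writing $m:=\min(m_1,m_2)$ and fixing $0\le j\le m$, I would expand $(fg)^{(j)}=\sum_{i=0}^j\binom{j}{i}f^{(i)}g^{(j-i)}$ and split the weight as $\x^{N_1+N_2+j}=\x^{N_1+i}\cdot\x^{N_2+(j-i)}$. The key observation is that for any decomposition $i+(j-i)=j\le m$, at least one of the two indices is $\le m-1$ (the only way to avoid this would be to have one index equal to $m$ and the other equal to $0$, but $0\le m-1$ since $m\ge1$). On the factor whose order $\ell$ is $\le m-1$ I would use the $L^\infty$-bound $\|\x^{N+\ell+1/2}f^{(\ell)}\|_{L^\infty}\lesssim\|f\|_{W^{m_1}_{N_1}}$ coming from Lemma~\ref{lem:decay}, and on the remaining factor (whose order is $\le j\le m\le m_1,m_2$) the defining $L^2$-bound of the weighted space; the half-power mismatch is absorbed via $\x^{-1/2}\le1$. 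Summing over $i$ and then $j$ would give $\|fg\|_{W^{\min(m_1,m_2)}_{N_1+N_2}}\lesssim\|f\|_{W^{m_1}_{N_1}}\|g\|_{W^{m_2}_{N_2}}$.

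Next, for multiplication by $1/\x^k$ and by $x/\x^{k+1}$ I would use that these functions $h$ are smooth and satisfy $|h^{(\ell)}(x)|\le C_\ell\,\x^{-k}$ for all $\ell\ge0$. Then, by Leibniz and $\x\ge1$, one has $\x^{N_1+k+j}|f^{(i)}h^{(j-i)}|\lesssim\x^{N_1+j}|f^{(i)}|\le\x^{N_1+i}|f^{(i)}|$ for $i\le j$, so after taking $L^2$-norms and summing the bound is dominated by $\|f\|_{W^{m_1}_{N_1}}$; here no decay estimate is needed at all. For $f\mapsto f'$ with $m_1\ge2$, the claim is a pure reindexing: for $0\le j\le m_1-1$ one has $\x^{(N_1+1)+j}(f')^{(j)}=\x^{N_1+(j+1)}f^{(j+1)}$ with $j+1\le m_1$, so $\|f'\|_{W^{m_1-1}_{N_1+1}}\le\|f\|_{W^{m_1}_{N_1}}$. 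Finally the ring statement is the product estimate specialized to $N_1=N_2=N$, combined with the continuous inclusion $W^m_{2N}\subseteq W^m_N$, which holds because $N\ge0$ and $\x\ge1$.

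The only point requiring genuine care — and the closest thing to an obstacle — is the index bookkeeping at the top order $j=\min(m_1,m_2)$ in the product estimate, since Lemma~\ref{lem:decay} provides $L^\infty$-control of $f^{(\ell)}$ only up to order $\ell=m-1$, not $m$; the case analysis described above is designed precisely to route the $L^\infty$-bound onto the factor whose order is admissible and the $L^2$-bound onto the other. Everything else is routine.
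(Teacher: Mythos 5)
The paper offers no proof of this lemma (it is declared ``straightforward''), so there is nothing to compare against beyond the obvious route, which is exactly the one you take: Leibniz plus the decay estimate of Lemma~\ref{lem:decay}. Your treatment of the product map is correct, and you are right that the only delicate point is the top order $j=\min(m_1,m_2)$, where the observation that at least one index in $i+(j-i)=j\le m$ is $\le m-1$ lets you place the $L^\infty$-bound on an admissible factor and the $L^2$-bound on the other; the derivative map and the ring statement are handled correctly as well.

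There is, however, one concrete error in the step for multiplication by $h=\x^{-k}$ (and $x/\x^{k+1}$). The bound $|h^{(\ell)}|\le C_\ell\x^{-k}$ is true but too weak, and the chain you write,
\[
\x^{N_1+k+j}\,|f^{(i)}h^{(j-i)}|\;\lesssim\;\x^{N_1+j}|f^{(i)}|\;\le\;\x^{N_1+i}|f^{(i)}|\quad (i\le j),
\]
has its last inequality backwards: since $\x\ge 1$ and $j\ge i$, one has $\x^{N_1+j}\ge\x^{N_1+i}$, and $\x^{N_1+j}f^{(i)}=\x^{\,j-i}\cdot\x^{N_1+i}f^{(i)}$ need not lie in $L^2$ for $i<j$. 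The argument is rescued by the correct, stronger decay $|h^{(\ell)}(x)|\le C_\ell\,\x^{-k-\ell}$ (each differentiation of $\x^{-k}$ or $x/\x^{k+1}$ gains one power of decay, as the paper's formulas \eqref{eq:'1}--\eqref{eq:'2} already illustrate for $\ell=1,2$); with it, $\x^{N_1+k+j}|f^{(i)}h^{(j-i)}|\lesssim\x^{N_1+i}|f^{(i)}|\in L^2$ and the estimate closes. With this one-line correction your proof is complete.
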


As a consequence from Lemma \ref{lem:W-properties} we get
\begin{Lem}\label{lem:A-properties}
Assume that $m_1, m_2\ge 1$, $N_1, N_2, n_1, n_2, k\ge 0$.
Then the mappings,
\begin{equation*}
\begin{array}{l}
(f,g)\mapsto f\cdot g,\,\,\,
\A_{n_1,N_1}^{m_1}\times \A_{n_2,N_2}^{m_2}\to\A_{n_1+n_2,\min(N_1+n_2,N_2+n_1)}^{\min(m_1,m_2)},\\
f\mapsto f\cdot\frac{1}{\x^k},\,\,\,\A_{n_1,N_1}^{m_1}\to\A_{n_1+k,N_1+k}^{m_1},\\
f\mapsto f\cdot\frac{x}{\x^{k+1}},\,\,\,\A_{n_1,N_1}^{m_1}\to\A_{n_1+k,N_1+k}^{m_1},
\end{array}
\end{equation*}
and for $m_1\ge 2$,
\begin{equation*}
f\mapsto f',\;\;\;\A_{n_1,N_1}^{m_1}\to\A_{n_1+1,N_1+1}^{m_1-1},
\end{equation*}
are continuous.
In particular, for any $m\ge 1$, $N\ge 0$, and $n\ge 0$, $\A_{n,N}^m$ is a ring with continuous
(pointwise) product.
\end{Lem}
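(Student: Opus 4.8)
The plan is to deduce the lemma from Lemma~\ref{lem:W-properties} by means of the direct-sum splitting \eqref{eq:splitting}, $\A^m_{n,N}=A_{n,N}\oplus W^m_N$, treating the finite-dimensional asymptotic part and the remainder part separately and carefully tracking the weight indices. Write a generic element of $\A^{m_i}_{n_i,N_i}$ as $u_i=P_i+f_i$ with $P_i=\sum_{k=n_i}^{N_i}\bigl(a^{(i)}_k/\x^k+b^{(i)}_k\,x/\x^{k+1}\bigr)\in A_{n_i,N_i}$ and $f_i\in W^{m_i}_{N_i}$. Set $N_*:=\min(N_1+n_2,N_2+n_1)$ and $m_*:=\min(m_1,m_2)$, and note that $N_*\le N_1+N_2$ (since $n_i\le N_i$), that $m_*\le m_i$, and that $W^{m}_{N'}\subseteq W^{m}_{N}$ continuously whenever $N'\ge N$ (because $\x\ge 1$). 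For the product statement, expand $u_1u_2=P_1P_2+(P_1f_2+f_1P_2)+f_1f_2$. By Lemma~\ref{lem:W-properties}, $f_1f_2\in W^{m_*}_{N_1+N_2}\subseteq W^{m_*}_{N_*}$, bilinearly and continuously. The term $P_1f_2$ is a finite linear combination, with coefficients $a^{(1)}_k,b^{(1)}_k$, of the functions $f_2/\x^k$ and $f_2\,x/\x^{k+1}$ with $n_1\le k\le N_1$; by Lemma~\ref{lem:W-properties} each of these lies in $W^{m_2}_{N_2+k}\subseteq W^{m_2}_{N_2+n_1}\subseteq W^{m_*}_{N_*}$, so $(P_1,f_2)\mapsto P_1f_2$ is continuous into $W^{m_*}_{N_*}$; the term $f_1P_2$ is handled symmetrically using $N_1+n_2\ge N_*$.

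For $P_1P_2$, expand into products of basis functions: $\frac{1}{\x^{k_1}}\cdot\frac{1}{\x^{k_2}}=\frac{1}{\x^{k_1+k_2}}$, $\ \frac{1}{\x^{k_1}}\cdot\frac{x}{\x^{k_2+1}}=\frac{x}{\x^{k_1+k_2+1}}$, and, using $x^2=\x^2-1$, $\ \frac{x}{\x^{k_1+1}}\cdot\frac{x}{\x^{k_2+1}}=\frac{1}{\x^{k_1+k_2}}-\frac{1}{\x^{k_1+k_2+2}}$. In every case the resulting terms are basis functions $1/\x^l$ or $x/\x^{l+1}$ with index $l\ge k_1+k_2\ge n_1+n_2$. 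Those with $l\le N_*$ are legitimate asymptotic terms of $\A^{m_*}_{n_1+n_2,N_*}$, while those with $l\ge N_*+1$ lie in $W^{m_*}_{N_*}$ by \eqref{eq:1/x_in_W} (applied with $N=N_*$, $r=m_*\ge 1$). Since the coefficients of this finite expansion are bilinear in the $a^{(i)}_k,b^{(i)}_k$, the map $(P_1,P_2)\mapsto P_1P_2$ is continuous into $\A^{m_*}_{n_1+n_2,N_*}$. Adding the three groups shows $(u_1,u_2)\mapsto u_1u_2$ is continuous into $\A^{m_*}_{n_1+n_2,N_*}$, as claimed.

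The remaining maps are handled the same way. For $u\mapsto u\cdot\frac{1}{\x^k}$ on $\A^{m_1}_{n_1,N_1}$, write $u=P+f$; then $f/\x^k\in W^{m_1}_{N_1+k}$ by Lemma~\ref{lem:W-properties}, and $P/\x^k$ is a finite combination of $1/\x^{j+k}$ and $x/\x^{j+k+1}$ with $n_1+k\le j+k\le N_1+k$, i.e.\ exactly the asymptotic terms of $\A^{m_1}_{n_1+k,N_1+k}$; continuity follows, and the case of multiplication by $x/\x^{k+1}$ is identical. For $u\mapsto u'$ with $m_1\ge 2$, write $u=P+f$; then $f'\in W^{m_1-1}_{N_1+1}$ by Lemma~\ref{lem:W-properties}, and by \eqref{eq:'1}--\eqref{eq:'2} each $(1/\x^j)'$ and $(x/\x^{j+1})'$ is a finite linear combination of basis functions whose indices are strictly larger than $j$ (namely $j+1$, $j+2$ or $j+3$); hence all these indices lie in $\{n_1+1,n_1+2,\dots\}$, those of index $\le N_1+1$ being asymptotic terms of $\A^{m_1-1}_{n_1+1,N_1+1}$ and those of index $\ge N_1+2$ lying in $W^{m_1-1}_{N_1+1}$ by \eqref{eq:1/x_in_W}. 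As the expansion is linear in the coefficients of $P$, the derivative map is continuous. Finally, taking $m_1=m_2=m$, $n_1=n_2=n$, $N_1=N_2=N$ in the product statement gives $N_*=N+n\ge N$, hence $\A^m_{2n,N+n}\subseteq\A^m_{n,N}$ continuously, which shows $\A^m_{n,N}$ is a ring with continuous pointwise product.

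The only real difficulty here is the index bookkeeping: one must check in each case that every basis function produced by the expansion has index bounded below as required (by $n_1+n_2$ for the product, by $n_1+k$ for the multiplications, by $n_1+1$ for the derivative), and that whenever its index exceeds the target space's upper index the function genuinely belongs to the target's remainder space --- this is precisely where \eqref{eq:1/x_in_W} is used, and where the identity $x^2=\x^2-1$ is needed so that the product of two $b$-type terms is re-expressed through the admissible basis functions rather than the non-admissible $x^2/\x^l$.
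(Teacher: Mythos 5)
Your proof is correct and follows exactly the route the paper intends: the paper states this lemma without proof, merely as ``a consequence of Lemma~\ref{lem:W-properties}'', and your argument via the splitting $\A^m_{n,N}=A_{n,N}\oplus W^m_N$, the basis-function products (including the $x^2=\x^2-1$ reduction), and the use of \eqref{eq:1/x_in_W} to absorb high-index terms into the remainder space is precisely the bookkeeping that assertion leaves to the reader.
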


\begin{Rem}\label{rem:smoothness}
As the mappings considered in Lemma \ref{lem:W-properties} and Lemma \ref{lem:A-properties} are
multilinear and continuous they are $C^\infty$-smooth.
\end{Rem}

Similar arguments imply the following analogs of 
Lemma \ref{lem:W-properties} and Lemma \ref{lem:A-properties}.
\begin{Lem}\label{lem:H-properties}
Assume that $m_1, m_2\ge 1$, $N_1, N_2\ge 0$, and $k\ge 0$.
Then the mappings,
\[
\begin{array}{l}
(f,g)\mapsto f\cdot g,\;\;\;H_{N_1}^{m_1}\times H_{N_2}^{m_2}\to H_{N_1+N_2}^{\min(m_1,m_2)},\\
f\mapsto f\cdot\frac{1}{\x^k},\;\;\;H_{N_1}^{m_1}\to H_{N_1+k}^{m_1},\\
f\mapsto f\cdot\frac{x}{\x^{k+1}},\;\;\;H_{N_1}^{m_1}\to H_{N_1+k}^{m_1},
\end{array}
\]
and for $m_1\ge 2$,
\begin{equation*}
f\mapsto f',\;\;\;H_{N_1}^{m_1}\to H_{N_1}^{m_1-1},
\end{equation*}
are continuous. 
In particular, for any $m\ge 1$, $N\ge 0$, $H_N^m$ is a ring with continuous (pointwise) product.
\end{Lem}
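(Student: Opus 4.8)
The plan is to follow the proof of Lemma \ref{lem:W-properties} essentially line by line, the only new tool being the embedding Lemma \ref{lem:decay'}, which lets one trade a derivative for an $L^\infty$-bound. First I would record the elementary observation that for every $k\ge0$ and $j\ge0$ the functions $\x^k(1/\x^k)^{(j)}$ and $\x^k(x/\x^{k+1})^{(j)}$ are bounded on $\R$; this is immediate from \eqref{eq:'1}--\eqref{eq:'2} together with a short induction using $|x|\le\x$, since every derivative of $1/\x^k$ decays at least like $\x^{-k}$.

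For the product map I would fix $f\in H^{m_1}_{N_1}$, $g\in H^{m_2}_{N_2}$, set $m:=\min(m_1,m_2)$, note (using the one-dimensional embedding $H^1_{loc}\subseteq C^0$) that $fg\in H^m_{loc}$, and expand by Leibniz:
\[
\x^{N_1+N_2}(fg)^{(j)}=\sum_{i=0}^j\binom{j}{i}\big(\x^{N_1}f^{(i)}\big)\big(\x^{N_2}g^{(j-i)}\big),\qquad 0\le j\le m.
\]
The key point is that in a term with $i+(j-i)=j\le m$ one cannot have simultaneously $i\ge m_1$ and $j-i\ge m_2$, since that would force $m_1+m_2\le\min(m_1,m_2)$, impossible for $m_1,m_2\ge1$. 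Hence either $i\le m_1-1$, in which case Lemma \ref{lem:decay'} bounds $\x^{N_1}f^{(i)}$ in $L^\infty$ by $C\|f\|_{H^{m_1}_{N_1}}$ while $\x^{N_2}g^{(j-i)}\in L^2$ with norm $\le\|g\|_{H^{m_2}_{N_2}}$, or else $j-i\le m_2-1$ and the roles are reversed. Summing the resulting $L^2$-bounds over $i$ and $j$ yields $fg\in H^m_{N_1+N_2}$ together with the continuity estimate.

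The two multiplication maps I would treat in exactly the same way, with $g$ replaced by $1/\x^k$ or $x/\x^{k+1}$; the difference is that now the companion factor $\x^k(1/\x^k)^{(j-i)}$ (resp.\ $\x^k(x/\x^{k+1})^{(j-i)}$) is bounded for \emph{every} $j-i$ by the observation above, so no derivative must be traded to $L^\infty$ and the full regularity index $m_1$ survives, which is why the target is $H^{m_1}_{N_1+k}$. The derivative map $f\mapsto f'$ is immediate from the definition, since $\x^{N_1}(f')^{(j)}=\x^{N_1}f^{(j+1)}\in L^2$ for $0\le j\le m_1-1$. Finally, to obtain the ring property I would specialize the product estimate to $m_1=m_2=m$, $N_1=N_2=N$ and compose with the continuous inclusion $H^m_{2N}\subseteq H^m_N$, valid because $\x\ge1$ and $N\ge0$.

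I do not anticipate a real obstacle here: the argument is entirely soft once Lemma \ref{lem:decay'} is available. The only step that needs genuine attention is the index bookkeeping in the product estimate — checking that in every Leibniz term at least one factor carries few enough derivatives to be placed in $L^\infty$ — and extracting from it the sharp regularity exponent $\min(m_1,m_2)$.
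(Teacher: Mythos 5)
Your proof is correct and is exactly the routine Leibniz-plus-embedding argument the paper has in mind when it dismisses this lemma with ``similar arguments'' (no proof is actually written out there). The index bookkeeping is right: since $j\le\min(m_1,m_2)$ forces $i\le m_1-1$ or $j-i\le m_2-1$ in every Leibniz term, Lemma \ref{lem:decay'} always lets one factor absorb its full weight into $L^\infty$, and the remaining points (boundedness of $\x^k(1/\x^k)^{(j)}$, the trivial derivative map, and $H^m_{2N}\subseteq H^m_N$) are handled correctly.
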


\begin{Lem}\label{lem:AH-properties}
Assume that $m_1, m_2\ge 1$, $N_1, N_2, n_1, n_2, k\ge 0$.
Then the mappings,
\begin{equation*}
\begin{array}{l}
(f,g)\mapsto f\cdot g,\;\;\;
\AH_{n_1,N_1}^{m_1}\times \AH_{n_2,N_2}^{m_2}\to\AH_{n_1+n_2,\min(N_1+n_2,N_2+n_1)}^{\min(m_1,m_2)},\\
f\mapsto f\cdot\frac{1}{\x^k},\;\;\;\AH_{n_1,N_1}^{m_1}\to\AH_{n_1+k,N_1+k}^{m_1},\\
f\mapsto f\cdot\frac{x}{\x^{k+1}},\;\;\;\AH_{n_1,N_1}^{m_1}\to\AH_{n_1+k,N_1+k}^{m_1},
\end{array}
\end{equation*}
and for $m_1\ge 2$,
\begin{equation*}
f\mapsto f',\;\;\;\AH_{n_1,N_1}^{m_1}\to\AH_{n_1+1,N_1}^{m_1-1},
\end{equation*}
are continuous.
In particular, for any $m\ge 1$, $N\ge 0$, and $n\ge 0$, $\AH_{n,N}^m$ is a ring with continuous
(pointwise) product.
\end{Lem}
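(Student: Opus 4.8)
The plan is to mimic, verbatim, the way Lemma \ref{lem:A-properties} is deduced from Lemma \ref{lem:W-properties}: reduce each assertion to Lemma \ref{lem:H-properties} plus a handful of explicit computations with the finitely many asymptotic monomials $1/\x^k$ and $x/\x^{k+1}$. The key structural input is the direct sum decomposition $\AH_{n,N}^m=A_{n,N}\oplus H_N^m$ (the $\AH$-analog of \eqref{eq:splitting}), where $A_{n,N}$ is the finite-dimensional span of the monomials $1/\x^k$, $x/\x^{k+1}$ with $n\le k\le N$. First I would record two elementary facts. Fact (i): since $W_N^r\subseteq H_N^r$, \eqref{eq:1/x_in_W} gives $1/\x^k,\ x/\x^{k+1}\in H_N^r$ for every $r\ge 1$ as soon as $k\ge N+1$. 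Fact (ii): using $x^2=\x^2-1$, the pointwise product of any two asymptotic monomials is an integer combination of monomials $1/\x^l$, $x/\x^{l+1}$ whose index $l$ is at least the sum of the two original indices; likewise, by \eqref{eq:'1}--\eqref{eq:'2}, $(1/\x^j)'$ is a multiple of $x/\x^{(j+1)+1}$ and $(x/\x^{j+1})'$ is an integer combination of $1/\x^{j+1}$ and $1/\x^{j+3}$, all of index at least $j+1$.

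For the product map I would write $f=f_A+f_H$, $g=g_A+g_H$ and expand $fg=f_Ag_A+f_Ag_H+f_Hg_A+f_Hg_H$. By Lemma \ref{lem:H-properties}, $f_Hg_H\in H_{N_1+N_2}^{\min(m_1,m_2)}$; a monomial of $g_A$ has index $k\ge n_2$, so $f_H$ times it lies in $H_{N_1+k}^{m_1}\subseteq H_{N_1+n_2}^{m_1}$, and symmetrically $f_Ag_H\in H_{N_2+n_1}^{m_2}$. For $f_Ag_A$, fact (ii) yields monomials of index $\ge n_1+n_2$; those of index $\le\min(N_1+n_2,N_2+n_1)$ are genuine elements of $A_{n_1+n_2,\min(N_1+n_2,N_2+n_1)}$, while the rest lie in the remainder space by fact (i). Since $H_{N_1+n_2}^{m_1}$, $H_{N_2+n_1}^{m_2}$ and $H_{N_1+N_2}^{\min(m_1,m_2)}$ all embed continuously into $H_{\min(N_1+n_2,N_2+n_1)}^{\min(m_1,m_2)}$, assembling the four pieces gives $fg\in\AH_{n_1+n_2,\min(N_1+n_2,N_2+n_1)}^{\min(m_1,m_2)}$ with a bound bilinear in $(f,g)$, hence continuity. (If $n_i\ge N_i+1$ for one of the arguments, the corresponding $A$-part is empty by \eqref{eq:convention2}; one then checks $n_1+n_2\ge\min(N_1+n_2,N_2+n_1)+1$, so the target is the pure remainder space into which all surviving pieces still embed, and the statement remains consistent.) The two multiplication maps $f\mapsto f\cdot\frac{1}{\x^k}$ and $f\mapsto f\cdot\frac{x}{\x^{k+1}}$ are then just the specializations to a single monomial: on $A_{n_1,N_1}$ the index shifts by $k$ into $A_{n_1+k,N_1+k}$, and on $H_{N_1}^{m_1}$ one invokes the corresponding map of Lemma \ref{lem:H-properties}.

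For the derivative map $f\mapsto f'$, fact (ii) shows that $f_A'$ is an integer combination of monomials of index $\ge n_1+1$, any one of which of index exceeding $N_1$ lies in $H_{N_1}^{m_1-1}$ by fact (i); combined with $f_H'\in H_{N_1}^{m_1-1}$ from Lemma \ref{lem:H-properties}, this gives $f'\in\AH_{n_1+1,N_1}^{m_1-1}$. It is worth stressing that here the weight index $N_1$ does \emph{not} increase, in contrast with the $\A$-scale; this is the one place the two scales genuinely differ, and it is immediate from the fact that every weight in $H_N^m$ equals $N$. Finally, taking $n_1=n_2=n$, $N_1=N_2=N$, $m_1=m_2=m$ in the product statement and using $\min(N+n,N+n)=N+n\ge N$ gives $\AH_{n,N}^m\cdot\AH_{n,N}^m\subseteq\AH_{2n,N+n}^m\subseteq\AH_{n,N}^m$, so $\AH_{n,N}^m$ is a ring with continuous product. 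The only nontrivial part of the proof is the index bookkeeping in the product map and checking its compatibility with the convention \eqref{eq:convention2}; everything else is routine and closely parallels the proof of Lemma \ref{lem:A-properties}.
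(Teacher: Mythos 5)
Your argument is correct and is essentially the proof the paper intends: the paper omits the proof with the remark that it follows by ``similar arguments'' to Lemma \ref{lem:A-properties}, i.e.\ exactly your splitting $\AH_{n,N}^m=A_{n,N}\oplus H_N^m$, the remainder estimates of Lemma \ref{lem:H-properties} for the cross and remainder terms, and the explicit monomial computations \eqref{eq:'1}--\eqref{eq:'2} together with $x^2=\x^2-1$ for the asymptotic parts. Your observation that the weight $N_1$ does not increase under differentiation on the $\AH$ scale is precisely the point where the two scales diverge, and your index bookkeeping (including the reduction of high-index monomials into the remainder via $W_N^r\subseteq H_N^r$) is accurate.
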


\noindent{\em Left composition with analytic maps:}
First we introduce some additional notation. Take $r\ge 0$ and denote 
\[
\Rr:=\{x\in\R\,|\,|x|\ge r\}\,.
\]
For any $m\ge 1$, $N\ge 0$, and $n\ge 1$, consider the weighted Sobolev space
\[
W^m_N(\Rr):=\big\{f\in H^m_{loc}(\Rr)\,\big|\,\x^N f,...,\x^{N+m} f^{(m)}\in L^2(\Rr)\big\}
\]
supplied with the norm
\[
\|f\|_{W^m_N(\Rr)}:=\Big(\sum_{j=0}^m\int_{\Rr}|\x^{N+j} f^{(j)}(x)|^2\,dx\Big)^{1/2}
\]
as well as the modified asymptotic space 
\[
\A^m_{n,N}(\Rr):=
\Big\{u=\sum_{k=n}^N\Big(a_k\frac{1}{\x^k}+b_k\frac{x}{\x^{k+1}}\Big)+f\,\Big|\,f\in W^m_N(\Rr)\Big\}
\]
supplied with the {\em weighted} norm
\begin{equation}\label{eq:modified_norm}
\|u\|_{\A^m_{n,N}(\Rr)}:=\sum_{k=n}^N\big(|a_k|+|b_k|\big)/\rr^{k}+\|f\|_{W^m_N(\Rr)}\,.
\end{equation}
Note that if $r=0$ then $\A^m_{n,N}(\R_r)\equiv\A^m_{n,N}$ and the corresponding norms coincide.
It is also clear that for any given $r\ge 0$ the restriction map,
\[
\A^m_{n,N}\to\A^m_{n,N}(\Rr),\,\,\,u\mapsto u|_{\Rr},
\]
is continuous and\footnote{For simplicity of notation when estimating the $\A^m_{n,N}(\Rr)$-norm of
$u|_{\A^m_{n,N}(\Rr)}$ we write $u$ instead of $u|_{\A^m_{n,N}(\Rr)}$.}
\[
\|u\|_{\A^m_{n,N}(\Rr)}\to 0,\,\,r\to\infty\,.
\]
\begin{Lem}\label{lem:r-spaces} Assume that $m\ge 1$, $N\ge 0$, and $n\ge 1$. 
Then the following two statements hold

\noindent $1)$ $u\in\A^m_{n,N}$ if and only if $u\in H^m_{loc}\big((-(r+1),r+1)\big)$ and 
$u|_{\Rr}\in\A^m_{n,N}(\Rr)$.

\noindent $2)$ There exists a positive constant $C>0$ so that for any $r\ge 0$, for any integer $p\ge 0$, 
and for any $u\in\A^m_{n,N}(\Rr)$,
\begin{equation}\label{eq:product_powers}
\|u^p u^{N+1}\|_{\A^m_{n,N}(\Rr)}\le \big(C\|u\|_{\A^m_{n,N}(\Rr)}\big)^p\|u^{N+1}\|_{\A^m_{n,N}(\Rr)}\,.
\end{equation}
\end{Lem}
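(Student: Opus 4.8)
The plan is to treat the two items separately. For item $1)$, the ``only if'' direction is immediate, since $\A^m_{n,N}\subseteq H^m_{loc}(\R)$ and the restriction map $\A^m_{n,N}\to\A^m_{n,N}(\Rr)$ was noted above to be continuous. For the ``if'' direction I would use a cut-off argument. Write the (unique) decomposition $u|_{\Rr}=P+f$, where $P:=\sum_{k=n}^N\big(a_k\x^{-k}+b_k\,x\x^{-k-1}\big)$ and $f\in W^m_N(\Rr)$; uniqueness holds because the leading monomial $\x^{-n}$ does not lie in $W^m_N(\Rr)$ when $n\le N$. As $P\in C^\infty(\R)$, the hypothesis gives $u-P\in H^m_{loc}\big((-(r+1),r+1)\big)$, while on $\Rr$ we have $u-P=f\in W^m_N(\Rr)$. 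Pick $\chi\in C^\infty(\R)$ with $\chi\equiv1$ on $[-(r+\tfrac12),r+\tfrac12]$ and $\supp\chi$ a compact subset of $(-(r+1),r+1)$, and write $u-P=\chi(u-P)+(1-\chi)(u-P)$. The first summand is compactly supported and belongs to $H^m(\R)\subseteq W^m_N(\R)$; the second is supported in $\{|x|\ge r+\tfrac12\}\subseteq\Rr$, where it equals $(1-\chi)f\in W^m_N(\Rr)$ by the Leibniz rule and the boundedness of the derivatives of $\chi$, and since it vanishes to infinite order near $|x|=r$, its extension by zero lies in $W^m_N(\R)$. Adding the two pieces gives $u-P\in W^m_N(\R)$, i.e.\ $u\in\A^m_{n,N}$.

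For item $2)$, I would first observe that $u^{N+1}$ has no asymptotic part. Iterating the ($\Rr$-version of the) product estimate in Lemma \ref{lem:A-properties} yields $u^{\,j}\in\A^m_{jn,\,N+(j-1)n}(\Rr)$, so that $u^{N+1}\in\A^m_{(N+1)n,\,N+Nn}(\Rr)$; since $n\ge1$ the bottom index satisfies $(N+1)n\ge N+1$, and the computations \eqref{eq:'1}--\eqref{eq:'2} (see \eqref{eq:1/x_in_W}) then show that every asymptotic monomial occurring there already lies in $W^m_N(\Rr)$. Hence $u^{N+1}\in W^m_N(\Rr)$ and $\|u^{N+1}\|_{\A^m_{n,N}(\Rr)}=\|u^{N+1}\|_{W^m_N(\Rr)}$. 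Consequently it suffices to prove the one-step bound
\[
\|u\,g\|_{W^m_N(\Rr)}\le C\,\|u\|_{\A^m_{n,N}(\Rr)}\,\|g\|_{W^m_N(\Rr)}\qquad\text{for all }g\in W^m_N(\Rr),
\]
with $C>0$ independent of $r$, and then to apply it $p$ times starting from $g=u^{N+1}$: this gives $u^p u^{N+1}\in W^m_N(\Rr)$ together with the inequality \eqref{eq:product_powers}.

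To establish the one-step bound, split $u=P+f$ as above. The contribution $\|fg\|_{W^m_N(\Rr)}\le C\|f\|_{W^m_N(\Rr)}\|g\|_{W^m_N(\Rr)}$ is just the $r$-uniform version of the algebra estimate of Lemma \ref{lem:W-properties}, whose proof uses only $r$-insensitive pointwise weight inequalities. For the contribution of $P$, the decisive facts are that on $\Rr$ one has $\x\ge\rr$, hence $\x^{-k}\le\rr^{-k}$, and that by \eqref{eq:'1}--\eqref{eq:'2} the $i$-th derivatives of $\x^{-k}$ and of $x\x^{-k-1}$ are $O(\x^{-k-i})$; the extra decay $\x^{-i}$ compensates precisely the gain of a weight factor $\x^{i}$ incurred when the Leibniz rule distributes $j$ derivatives onto $g^{(j-i)}$, so that $\|\x^{-k}g\|_{W^m_N(\Rr)}+\|x\x^{-k-1}g\|_{W^m_N(\Rr)}\le C_k\,\rr^{-k}\|g\|_{W^m_N(\Rr)}$ with $C_k$ bounded for $n\le k\le N$. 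Summing over $k$ and recalling that the norm \eqref{eq:modified_norm} weighs each $|a_k|,|b_k|$ by exactly $\rr^{-k}$ gives $\|Pg\|_{W^m_N(\Rr)}\le C\|u\|_{\A^m_{n,N}(\Rr)}\|g\|_{W^m_N(\Rr)}$, which finishes the one-step bound and hence item $2)$.

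The main obstacle is bookkeeping the $r$-uniformity throughout: one has to be sure that no constant entering the argument --- the algebra and Sobolev constants on the two half-lines constituting $\Rr$, the constants in the derivative bounds for the asymptotic monomials, and the matching with the weighted norm \eqref{eq:modified_norm} --- degenerates as $r\to\infty$. Once the pointwise estimate $\x\ge\rr$ on $\Rr$ and the precise form of \eqref{eq:modified_norm} are exploited as above, the rest is a routine Leibniz-rule computation.
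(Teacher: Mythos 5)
Your proof is correct and follows essentially the same route as the paper: the paper dismisses item $1)$ as immediate from the definitions (your cut-off argument fills in exactly that routine step), and for item $2)$ it likewise reduces to the one-step bound $\|ug\|\le C\|u\|_{\A^m_{n,N}(\Rr)}\|g\|_{\A^m_{n,N}(\Rr)}$ for $g\in W^m_N(\Rr)$, proved term by term on the asymptotic part using $\x^{-k}\le\rr^{-k}$ on $\Rr$ and the $\rr^{-k}$-weights in \eqref{eq:modified_norm}, and then concludes by induction from $u^{N+1}\in W^m_N(\Rr)$. No gaps; the $r$-uniformity bookkeeping you flag is exactly what the paper's estimates \eqref{eq:product1}--\eqref{eq:inequality4} carry out.
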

\begin{proof}
The proof of $1)$ follows directly from the definition of the spaces involved.
Let us prove $2)$. Take $u\in\A^m_{n,N}(\Rr)$ and $g\in W^m_N(\Rr)\subseteq\A^m_{n,N}$. 
Then $u=\sum_{k=n}^N(a_k\frac{1}{\x^k}+b_k\frac{x}{\x^{k+1}})+f$, $f\in W^m_N(\Rr)$.
For any $n\le k\le N$ the pointwise product $(a_k/\x^k)\cdot g\in W^m_{N+k}\subseteq W^m_N$
(Lemma \ref{lem:W-properties}) and
\begin{eqnarray}
\|(a_k/\x^k)\cdot g\|_{A^m_{n,N}(\Rr)}^2&=&
\sum_{j=0}^m\int_{\Rr}\Big|\Big(\frac{a_k}{\x^k}\,g(x)\Big)^{(j)}\x^{N+j}\Big|^2\,dx\nonumber\\
&\le&C_1^2\,|a_k|^2\sum_{j=0}^m\int_{\Rr}
\Big(\sum_{l=0}^j\frac{|g^{(l)}(x)|}{\x^{k}}\,\x^{N+l}\Big)^2\,dx\nonumber\\
&\le&C_1^2\,\frac{|a_k|^2}{\rr^{2k}}\sum_{j=0}^m\int_{\Rr}
\Big(\sum_{l=0}^j|g^{(l)}(x)|\,\x^{N+l}\Big)^2\,dx\nonumber\\
&\le&C_2^2\,\|(a_k/\x^k)\|_{A^m_{n,N}(\Rr)}^2\|g\|_{A^m_{n,N}(\Rr)}^2\label{eq:product1}
\end{eqnarray}
where the positive constants $C_1$ and $C_2$ are independent of the choice of $a_k\in\R$ and $g\in W^m_N(\Rr)$.
Similar arguments show that there exists $C_3>0$ such that for any $b_k\in\R$ and $g\in W^m_N(\Rr)$,
\begin{equation}\label{eq:product2}
\|(b_k x/\x^{k+1})\cdot g\|_{A^m_{n,N}(\Rr)}\le 
C_3\|(b_k x/\x^{k+1})\|_{A^m_{n,N}(\Rr)}\|g\|_{A^m_{n,N}(\Rr)}\,.
\end{equation}
By the product rule one also sees that there exists $C_4>0$ such that for any $f,g\in W^m_N$,
\begin{equation}\label{eq:product3}
\|f g\|_{\A^m_{n,N}(\Rr)}\le C_4\|f\|_{\A^m_{n,N}(\Rr)}\|g\|_{\A^m_{n,N}(\Rr)}\,.
\end{equation}
Combining \eqref{eq:product1}, \eqref{eq:product2}, and \eqref{eq:product3}, we get that there exists
a positive constant $C>0$ such that for any $u\in\A^m_{n,N}(\Rr)$ and $g\in W^m_N(\Rr)$,
\begin{equation}\label{eq:inequality4}
\|u g\|_{\A^m_{n,N}(\Rr)}\le C\|u\|_{A^m_{n,N}(\Rr)}\|g\|_{A^m_{n,N}(\Rr)}\,.
\end{equation}
As $n\ge 1$, the product $u^{N+1}\in W^m_N(\Rr)$ (Lemma \ref{lem:W-properties}) and the claimed
inequality 
\eqref{eq:product_powers} then follows from \eqref{eq:inequality4} and an induction argument.
\end{proof}
As a corollary we obtain the following
\begin{Prop}\label{prop:analyticity_general}
Assume that $m\ge 1$, $N\ge 0$, and $n\ge 1$.
Let $\alpha : I\to\R$ be a real-analytic function in the open interval $I\subseteq\R$ that 
contains zero, $0\in I$. Then, if $u\in\A^m_{n,N}$ and $\mathop{\rm Image}(u)\subseteq I$ then
$\alpha\circ u-\alpha(0)\in\A^m_{n,N}$.\footnote{$\mathop{\rm Image}(u):=\{u(x)\,|\,x\in\R\}$.} 
The same statement holds if $\A$ is replaced by $\AH$.
\end{Prop}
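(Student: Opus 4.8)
\noindent\emph{Proof proposal.}
The plan is to localize near infinity, where the hypothesis $n\ge1$ forces $u$ to be small, expand $\alpha$ there in its convergent Taylor series, and estimate the series termwise in the $r$-weighted norm \eqref{eq:modified_norm} of Lemma~\ref{lem:r-spaces}. First I would write $\alpha(t)=\sum_{p\ge0}c_pt^p$ for $|t|<\rho$, where $\rho>0$ is the radius of convergence of the Taylor series of $\alpha$ at $0$; for each $\rho_0\in(0,\rho)$ there is $M>0$ with $|c_p|\le M\rho_0^{-p}$ for all $p\ge0$. By Lemma~\ref{lem:decay} (resp.\ Lemma~\ref{lem:decay'} in the $\AH$ case) one has $u(x)\to0$ as $|x|\to\infty$ and $\|u\|_{\A^m_{n,N}(\Rr)}\to0$ as $r\to\infty$, so I can fix $r\ge0$ large enough that $\sup_{|x|\ge r}|u(x)|<\rho_0$ and $C\,\|u\|_{\A^m_{n,N}(\Rr)}<\rho_0$, with $C$ the constant furnished by Lemma~\ref{lem:r-spaces}~$2)$.

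Next I would split the Taylor series at $p=N+1$. For the head $h:=\sum_{p=1}^{N+1}c_p\,u^p$, the product rule of Lemma~\ref{lem:A-properties} gives $u^p\in\A^m_{pn,\,N+(p-1)n}$, and since the remainder gains weight while every asymptotic term $\x^{-k},\,x\x^{-k-1}$ with $k\ge N+1$ already lies in $W^m_N$ by \eqref{eq:1/x_in_W} and convention \eqref{eq:convention1}, one checks $\A^m_{pn,\,N+(p-1)n}\subseteq\A^m_{n,N}$ for all $p\ge1$; hence $h\in\A^m_{n,N}$ and in particular $h|_{\Rr}\in\A^m_{n,N}(\Rr)$. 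For the tail I would write $u^p=u^{p-N-1}\cdot u^{N+1}$ for $p\ge N+1$, recall from Lemma~\ref{lem:r-spaces}~$2)$ that $u^{N+1}\in W^m_N(\Rr)$ (here the assumption $n\ge1$ enters), and estimate
\[
\sum_{p\ge N+2}|c_p|\,\|u^p\|_{\A^m_{n,N}(\Rr)}\le \|u^{N+1}\|_{\A^m_{n,N}(\Rr)}\sum_{p\ge N+2}|c_p|\big(C\,\|u\|_{\A^m_{n,N}(\Rr)}\big)^{p-N-1}\,,
\]
which is finite by the choice of $r$ together with $|c_p|\le M\rho_0^{-p}$. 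Since $\A^m_{n,N}(\Rr)$ is a Banach space continuously embedded in $C^0(\Rr)$, the tail series converges in $\A^m_{n,N}(\Rr)$, and its limit agrees pointwise on $\Rr$ with $\alpha\circ u-\alpha(0)-h$, the Taylor expansion being valid there because $|u(x)|<\rho_0<\rho$. Therefore $\alpha\circ u-\alpha(0)\in\A^m_{n,N}(\Rr)$.

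To conclude, on the bounded interval $\big(-(r+1),r+1\big)$ one has $\alpha\circ u\in H^m_{loc}$, since $u\in H^m_{loc}\subseteq C^{m-1}$ and $H^m_{loc}$ (for $m\ge1$) is stable under left composition with the smooth function $\alpha$; hence $\alpha\circ u-\alpha(0)\in H^m_{loc}\big(-(r+1),r+1\big)$, and combining this with the previous step, Lemma~\ref{lem:r-spaces}~$1)$ gives $\alpha\circ u-\alpha(0)\in\A^m_{n,N}$. The $\AH$ statement follows by the identical argument once one records the evident analogs of Lemma~\ref{lem:r-spaces} and of \eqref{eq:modified_norm} with $W^m_N$ replaced by $H^m_N$, proved by the same computation using Lemma~\ref{lem:AH-properties}; note that $n\ge1$ stays essential, for if $n=0$ then $u$ need not decay at infinity (cf.\ Remark~\ref{rem:constant_term}) and the localization collapses. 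The main obstacle I anticipate is precisely this tail estimate: the finitely many asymptotic terms of $u$ do not become small in the unweighted sup-norm, so $\alpha$ cannot be expanded in a single global series, and one genuinely needs the $r$-weighted norm \eqref{eq:modified_norm}—in which those coefficients carry the decaying factor $\rr^{-k}$—to make the power series summable at all.
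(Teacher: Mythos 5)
Your proposal is correct and follows essentially the same route as the paper's proof: localize to $\Rr$ using the decay furnished by $n\ge 1$ and Lemma~\ref{lem:decay}, split the Taylor series at order $N+1$, sum the tail in the weighted norm \eqref{eq:modified_norm} via Lemma~\ref{lem:r-spaces}~$2)$, treat the bounded interval by stability of $H^m_{loc}$ under composition with the analytic $\alpha$, and glue with Lemma~\ref{lem:r-spaces}~$1)$. Your treatment is slightly more explicit than the paper's in two harmless places (the verification that the head $\sum_{p\le N+1}c_pu^p$ lies in $\A^m_{n,N}$, and the need for $\AH$-analogs of Lemma~\ref{lem:r-spaces}), but the substance is identical.
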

\begin{proof}
Take $u\in\A^m_{n,N}$ so that $\mathop{\rm Image}(u)\subseteq I$.
As $0\in I$ and as $\alpha : I\to\R$ is real-analytic there exists $\delta>0$ such that
\[
\alpha(x)=\sum_{j=0}^\infty\alpha_j x^j
\]
where the series converges absolutely on $(-\delta,\delta)$.
As $n\ge 1$ we see from Lemma \ref{lem:decay} that there exists $r\ge 0$ so that
for any $x\in\Rr$,
\[
|u(x)|<\delta\,.
\]
In particular, for any $x\in\Rr$,
\[
(\alpha\circ u)(x)=\sum_{j=0}^\infty\alpha_j u(x)^j,
\]
where the series converges absolutely. For any $x\in\Rr$, we have
\begin{equation}\label{eq:splitted_sum}
(\alpha\circ u)(x)-\alpha_0=\sum_{j=1}^N\alpha_j u(x)^j+
\sum_{p=0}^\infty\alpha_{N+1+p}u(x)^p u(x)^{N+1}\,.
\end{equation}
In view of Lemma \ref{lem:r-spaces},
\begin{equation}
\big\|\sum_{p=0}^\infty\alpha_{N+1+p}u^p u^{N+1}\big\|_{A^m_{n,N}(\Rr)}\!\le\!
\sum_{p=0}^\infty|\alpha_{N+1+p}|\big(C\|u\|_{\A^m_{n,N}(\Rr)}\big)^p\|u^{N+1}\|_{\A^m_{n,N}(\Rr)}\,.
\end{equation}
By taking $r\ge 0$ so that
\[
\|u\|_{\A^m_{n,N}(\Rr)}\le\delta/C
\]
we obtain that \eqref{eq:splitted_sum} converges in $\A^m_{n,N}(\Rr)$.
Hence,
\[
\big(\alpha\circ u-\alpha(0)\big)|_{\Rr}\in\A^m_{n,N}(\Rr)\,.
\]
As $u\in H^m_{loc}\big(-(r+1),r+1\big)$, $\mathop{\rm Image}(u)\subseteq I$, and as 
$\alpha$ is real-analytic in $I$, one concludes by standard arguments that 
\[
\alpha\circ u\in H^m_{loc}\big(-(r+1),r+1\big)\,.
\]
The statement of the Proposition now follows from Lemma \ref{lem:r-spaces}, $1)$.
The case of the space $\AH^m_{n,N}$ is treated in the same way.
\end{proof}

We will use Proposition \ref{prop:analyticity_general} for proving the following important Lemmas. 
\begin{Lem}\label{lem:1/phi'}  Assume that $\psi:=1+f>0$ where $f\in\A_{n,N}^m$, $m\ge 1$, $N\ge 0$, and
$n\ge 1$. Then $\frac{1}{\psi}-1\in\A_{n,N}^m$ and there exists an open neighborhood $\U$ of zero in
$\A_{n,N}^m$ such that for any $g\in\U$, $\psi+g>0$, and the mapping,
\[
g\mapsto\frac{1}{\psi+g}-1,\;\;\;\U\to \A_{n,N}^m,
\]
is real-analytic. The same statements are true if $\A$ is replaced by $\AH$.
\end{Lem}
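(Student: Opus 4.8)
The plan is to separate the claim into a \emph{membership} part ($\tfrac1\psi-1\in\A^m_{n,N}$) and an \emph{analyticity} part, to dispatch the first with the composition result Proposition~\ref{prop:analyticity_general} and the second with a Neumann-series argument in the Banach algebra $\A^m_{n,N}$, and then to reduce the parametrized statement to a fixed-domain inversion map by pre-composing with a translation.

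First I would prove that $\tfrac1\psi-1\in\A^m_{n,N}$. Since $m\ge1$ and $n\ge1$, Lemma~\ref{lem:decay} shows that $f$ is continuous and $f(x)\to0$ as $|x|\to\infty$, so $\psi=1+f$ is continuous, $\psi(x)\to1$ at infinity, and, being strictly positive everywhere, satisfies $c_0:=\inf_{x\in\R}\psi(x)>0$. Hence $\mathrm{Image}(f)$ is contained in a compact subinterval of $(-1,\infty)$, so one can choose an open interval $I$ with $\mathrm{Image}(f)\subseteq I\subseteq(-1,\infty)$ and $0\in I$. The function $\alpha(t):=1/(1+t)$ is real-analytic on $I$ with $\alpha(0)=1$, so Proposition~\ref{prop:analyticity_general} gives $\tfrac1\psi-1=\alpha\circ f-\alpha(0)\in\A^m_{n,N}$; the same argument applies to any $1+h$ with $h\in\A^m_{n,N}$ and $\inf_{x\in\R}(1+h(x))>0$.

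Next I would set $\mathcal W:=\{h\in\A^m_{n,N}\,:\,\inf_{x\in\R}(1+h(x))>0\}$, which is open by the embedding $\A^m_{n,N}\hookrightarrow C^0$ of Lemma~\ref{lem:decay}, and show that the inversion map $\iota:\mathcal W\to\A^m_{n,N}$, $\iota(h):=\tfrac1{1+h}-1$ (into $\A^m_{n,N}$ by the previous step), is real-analytic. Fixing $h_0\in\mathcal W$, put $\psi_0:=1+h_0$ and $\eta_0:=\tfrac1{\psi_0}-1\in\A^m_{n,N}$; then for a perturbation $k\in\A^m_{n,N}$ I would write
\[
\frac1{\psi_0+k}-1=\frac1{\psi_0}\Big(\frac1{1+k/\psi_0}-1\Big)+\eta_0,
\]
noting that $k\mapsto k/\psi_0=k+k\eta_0$ and $v\mapsto v/\psi_0=v+\eta_0 v$ are bounded linear self-maps of $\A^m_{n,N}$ (Lemma~\ref{lem:A-properties}), so $\tfrac1{1+k/\psi_0}-1=\sum_{j\ge1}(-1)^j(k/\psi_0)^j$ is a power series in $k$ whose homogeneous terms have geometrically decaying norms by the Banach-algebra estimate, and hence converges absolutely in $\A^m_{n,N}$ for $\|k\|_{\A^m_{n,N}}$ small. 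Since $\A^m_{n,N}$-convergence implies uniform convergence (Lemma~\ref{lem:decay}) and the series sums pointwise to $x\mapsto\tfrac1{\psi_0(x)+k(x)}-1$ whenever $\|k/\psi_0\|_{C^0}<1$, the $\A^m_{n,N}$-limit equals $\tfrac1{\psi_0+k}-1$, which exhibits $\iota$ near each $h_0$ as the sum of a convergent power series, i.e.\ as real-analytic on $\mathcal W$. Since $f\in\mathcal W$, the set $\U:=\mathcal W-f$ is an open neighborhood of $0$ in $\A^m_{n,N}$; for $g\in\U$ one has $\psi+g=1+(f+g)>0$ and $\tfrac1{\psi+g}-1=\iota(f+g)$, and real-analyticity in $g$ follows by composing the affine map $g\mapsto f+g$ with $\iota$. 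The statements for $\AH$ follow verbatim, with Lemma~\ref{lem:A-properties} replaced by Lemma~\ref{lem:AH-properties}, Lemma~\ref{lem:decay} by Lemma~\ref{lem:decay'}, and Proposition~\ref{prop:analyticity_general} by its $\AH$-version.

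The step I expect to require the most care is the identification of the limit in the Neumann-series argument: one must invoke the embedding into $C^0$ to upgrade norm convergence in $\A^m_{n,N}$ to pointwise convergence and thereby recognize the sum as $\tfrac1{1+h}-1$, while simultaneously tracking two distinct smallness requirements on the perturbation — one in $\|\cdot\|_{\A^m_{n,N}}$ for convergence of the series, one in $\|\cdot\|_{C^0}$ both for the pointwise geometric expansion and for the positivity $\psi_0+k>0$. Everything else is routine bookkeeping with the algebra and embedding estimates of Lemmas~\ref{lem:A-properties}, \ref{lem:AH-properties}, \ref{lem:decay}, \ref{lem:decay'}.
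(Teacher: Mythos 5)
Your proposal is correct and follows essentially the same route as the paper: membership of $\tfrac1\psi-1$ via Proposition \ref{prop:analyticity_general} applied to $\alpha(t)=1/(1+t)$, and analyticity via the Neumann expansion $\tfrac{1}{\psi+g}=\tfrac1\psi\sum_{j\ge0}(-1)^j\big(g\cdot\tfrac1\psi\big)^j$ in the Banach algebra $\A^m_{n,N}$, with the same two smallness conditions (one in the $\A^m_{n,N}$-norm for convergence, one in $L^\infty$ for positivity and pointwise identification). Your extra steps — working at an arbitrary base point of $\mathcal W$ and then translating, and explicitly identifying the limit via the $C^0$-embedding — are harmless refinements of the argument the paper gives more tersely.
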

\begin{proof}
The fact that $\frac{1}{\psi}-1\in\A^m_{n,N}$ follows from Proposition \ref{prop:analyticity_general}
applied
to the real-analytic function $\alpha(x):=1/(1+x)$, $\alpha : (-1,\infty)\to\R$.
Let us prove the second statement of the Lemma.
It follows from the continuity of the pointwise product in $\A^m_{n,N}$ and the continuity of
the inclusion $\A^m_{n,N}\subseteq L^\infty$ that there exists an open neighborhood $\U$ of zero in 
$\A^m_{n,N}$ such that 
\[
\Big\|g\cdot\frac{1}{\psi}\Big\|_{\A^m_{n,N}}<1\,\,\,\,\mbox{and}\,\,\,
\Big\|g\cdot\frac{1}{\psi}\Big\|_{L^\infty}<1\,.
\]
Then for any $g\in\U$,
\[
\frac{1}{\psi+g}=\frac{1}{\psi}\cdot\frac{1}{1+g\cdot\frac{1}{\psi}}=
\frac{1}{\psi}\left(1+\sum_{j=1}^\infty(-1)^j\Big(g\cdot\frac{1}{\psi}\Big)^j\right)
\]
where the series converges in $\A^m_{n,N}$.
The case of the space $\AH^m_{n,N}$ is treated in the same way. 
\end{proof}

Similar arguments show that the following Lemma hold.
\begin{Lem}\label{lem:1/x-composition}
Assume that $m\ge 2$, $N\ge 0$, and $n\ge 0$. Then for any $\varphi\in\A\D^m_{n,N}$
\[
\frac{1}{\langle\cdot\rangle}\circ\varphi=\frac{1}{\x}\big(1+w\big),\,\,\,w\in\A^m_{n+1,N+1}
\]
where $\frac{1}{\langle\cdot\rangle}\circ\varphi$ stands for $1/\sqrt{1+\varphi(x)^2}$.
Moreover, the mapping,
\[
\varphi\mapsto\frac{1}{\langle\cdot\rangle}\circ\varphi-\frac{1}{\x},\,\,\,\A\D^{m}_{n,N}\to\A^m_{n+2,N+2}
\]
is real-analytic. The same statements are true if $\A$ is replaced by $\AH$.
\end{Lem}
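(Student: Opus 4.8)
\smallskip
\noindent\textbf{Proof proposal.} The plan is to express the composition as a fixed real-analytic scalar function applied to an element of an asymptotic space and then invoke Proposition \ref{prop:analyticity_general}. Writing $\varphi=\id+u$ with $u\in\A^m_{n,N}$ we have
\[
1+\varphi(x)^2=\x^2+2xu+u^2=\x^2(1+h),\qquad h:=\frac{2xu}{\x^2}+\frac{u^2}{\x^2}.
\]
By Lemma \ref{lem:A-properties} (multiplication by $x/\x^{2}$ and by $1/\x^{2}$, together with the algebra property) one gets $2xu/\x^2\in\A^m_{n+1,N+1}$ and $u^2/\x^2\in\A^m_{2n+2,N+n+2}\subseteq\A^m_{n+1,N+1}$, hence $h\in\A^m_{n+1,N+1}$; since $n+1\ge1$, $h$ is bounded and tends to $0$ at infinity. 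Moreover $1+h=(1+\varphi^2)/\x^2>0$ everywhere, so $\mathop{\rm Image}(h)\subseteq(-1,\infty)$. Applying Proposition \ref{prop:analyticity_general} to $\alpha(t):=(1+t)^{-1/2}$, which is real-analytic on $(-1,\infty)\ni0$, yields $w:=(1+h)^{-1/2}-1\in\A^m_{n+1,N+1}$, whence
\[
\frac{1}{\langle\cdot\rangle}\circ\varphi=\frac{1}{\sqrt{1+\varphi^2}}=\frac{1}{\x}\cdot\frac{1}{\sqrt{1+h}}=\frac{1}{\x}(1+w).
\]
This is the first assertion; the case of $\AH$ is identical, with Lemma \ref{lem:AH-properties} replacing Lemma \ref{lem:A-properties}.

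For the real-analyticity statement, observe that $\frac{1}{\langle\cdot\rangle}\circ\varphi-\frac1\x=\frac1\x w$ and that $w\mapsto\frac1\x w$ is a continuous linear map $\A^m_{n+1,N+1}\to\A^m_{n+2,N+2}$ (Lemma \ref{lem:A-properties}), hence real-analytic; so it is enough to show that $\varphi\mapsto w=(1+h_\varphi)^{-1/2}-1$ is real-analytic into $\A^m_{n+1,N+1}$. The map $\varphi\mapsto u:=\varphi-\id$ is continuous affine, and $u\mapsto h_\varphi=(2xu+u^2)/\x^2$ is a polynomial map of degree two whose linear and bilinear coefficients are continuous by Lemma \ref{lem:A-properties}; therefore $\varphi\mapsto h_\varphi$, $\A\D^m_{n,N}\to\A^m_{n+1,N+1}$, is real-analytic. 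As real-analyticity is a local property, it now suffices to prove that for every $\varphi_0\in\A\D^m_{n,N}$ the map $h\mapsto(1+h)^{-1/2}$ is real-analytic on a neighbourhood of $h_0:=h_{\varphi_0}$ in $\A^m_{n+1,N+1}$.

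To this end write $(1+h)^{-1/2}=(1+h_0)^{-1/2}(1+z)^{-1/2}$ with $z:=(h-h_0)\cdot(1+h_0)^{-1}$. The first part of the lemma applied to $\varphi_0$ gives $(1+h_0)^{-1/2}=\x/\langle\varphi_0\rangle=1+w_0$ with $w_0\in\A^m_{n+1,N+1}$, and hence $(1+h_0)^{-1}=(1+w_0)^2=1+(2w_0+w_0^2)$ with $2w_0+w_0^2\in\A^m_{n+1,N+1}$ by the algebra property; thus $h\mapsto z$ is continuous and affine with $z(h_0)=0$, so $\|z\|_{\A^m_{n+1,N+1}}$ is small for $h$ in a small ball $B$ around $h_0$. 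On $B$ the binomial series $\sum_{j\ge0}\binom{-1/2}{j}z^j$ converges in the Banach algebra $\A^m_{n+1,N+1}$ (the norm being submultiplicative up to a constant), its sum equals $(1+z)^{-1/2}$ (convergence is uniform since $\A^m_{n+1,N+1}\hookrightarrow C^0$), and multiplication by the fixed element $1+w_0$ preserves real-analyticity. This represents $h\mapsto(1+h)^{-1/2}$ by a convergent power series around $h_0$, completing the proof; the $\AH$ case is identical.

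I expect the last step to be the main obstacle. Proposition \ref{prop:analyticity_general} gives only that $\alpha\circ h$ is well-defined, not that the composition depends real-analytically on the parameter, and in particular it gives no information near a base point $\varphi_0\ne\id$; one is therefore forced to re-expand $(1+h)^{-1/2}$ about $h_0$ by hand. This re-expansion works precisely because the already-established structural identity guarantees $(1+h_0)^{-1/2}-1\in\A^m_{n+1,N+1}$, and the remaining point is to control the convergence of the resulting binomial series in the Banach-algebra norm of $\A^m_{n+1,N+1}$.
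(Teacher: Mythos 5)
Your proof is correct and follows essentially the same route as the paper: the same factorization $1+\varphi^2=\x^2(1+h)$ with $h=2\tfrac{x}{\x}\tfrac{u}{\x}+(\tfrac{u}{\x})^2\in\A^m_{n+1,N+1}$ and the same appeal to Proposition \ref{prop:analyticity_general} with $\alpha(t)=(1+t)^{-1/2}$. For the real-analyticity the paper merely refers to the Neumann-series argument in the proof of Lemma \ref{lem:1/phi'}; your binomial-series expansion about $h_0$ is exactly the intended adaptation of that argument, so no substantive difference.
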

\proof 
Take $\varphi\in\A\D^m_{n,N}$. Then $\varphi(x)=x+u(x)$, $u\in\A^m_{n,N}$, and
\begin{equation}\label{eq:useful_form}
\frac{1}{\langle\cdot\rangle}\circ\varphi=\frac{1}{\sqrt{1+(x+u)^2}}=
\frac{1}{\x}\cdot\frac{1}{\sqrt{1+\Big(2\frac{x}{\x}\frac{u}{\x}+\big(\frac{u}{\x}\big)^2\Big)}}\,.
\end{equation}
In view of Lemma \ref{lem:A-properties}, the expression 
$2\frac{x}{\x}\frac{u}{\x}+\big(\frac{u}{\x}\big)^2\in\A^m_{n+1,N+1}$.
As $1+\Big(2\frac{x}{\x}\frac{u}{\x}+\big(\frac{u}{\x}\big)^2\Big)>0$ we obtain from 
Proposition \ref{prop:analyticity_general}, applied to the real-analytic function $\alpha(x):=1/\sqrt{1+x}$,
$\alpha : (-1,\infty)\to\R$, that 
\[
\frac{1}{\sqrt{1+\Big(2\frac{x}{\x}\frac{u}{\x}+\big(\frac{u}{\x}\big)^2\Big)}}-1\in\A^m_{n+1,N+1}\,.
\]
This together with \eqref{eq:useful_form} completes the proof of the first statement of the Lemma.
The second statement follows as in the proof of the second statement of Lemma \ref{lem:1/phi'}.
The case of the space $\AH^m_{n,N}$ is treated in the same way.
\endproof

%%%%%%%%%%%%%%%%%%%%%%%%%%%%%%%%%

\end{document}